\numberwithin{equation}{section}
\newcommand{\define}[1]{{\emph{#1}}}
\def\R{\mathbb{R}}
\def\Z{\mathbb{Z}}
\def\B{\mathfrak{B}}
\def\gam{\gamma}
\def\Gam{\Gamma}
\def\lam{\lambda}
\def\Lam{\Lambda}
\def\1{\mathds{1}}
\renewcommand\le{\leqslant}
\renewcommand\ge{\geqslant}
\renewcommand\leq{\leqslant}
\renewcommand\geq{\geqslant}
\renewcommand\hat{\widehat}
\newcommand{\ft}[1]{\widehat #1}
\newcommand{\dotprod}[2]{\langle #1, #2  \rangle }
\newcommand\mes{\operatorname{mes}}
\newcommand{\supp}{\operatorname{supp}}
\newcommand{\spec}{\operatorname{spec}}
\newcommand{\dist}{\operatorname{dist}}
\theoremstyle{plain}
\newtheorem{lem}{Lemma}[section]
\newtheorem{thm}[lem]{Theorem}
\newtheorem{corollary}[lem]{Corollary}
\newtheorem{prop}[lem]{Proposition}
\newcommand{\thmref}[1]{Theorem~\ref{#1}}
\newcommand{\lemref}[1]{Lemma~\ref{#1}}
\newcommand{\propref}[1]{Proposition~\ref{#1}}
\newcommand{\corref}[1]{Corollary~\ref{#1}}
\newtheorem*{theorem-m}{Theorem M}
\theoremstyle{definition}
\newtheorem*{definition*}{Definition}
\newtheorem*{remark}{Remark}
\newenvironment{enumerate-math}
{\begin{enumerate}
\addtolength{\itemsep}{5pt}
}
{\end{enumerate}}
\newenvironment{enumerate-math-abc}
{\begin{enumerate}
\addtolength{\itemsep}{5pt}
}
{\end{enumerate}}
\begin{document}

\title[Fourier quasicrystals]{Fourier quasicrystals and discreteness of the diffraction spectrum}
\author{Nir Lev}
\address{Department of Mathematics, Bar-Ilan University, Ramat-Gan 52900, Israel}
\email{levnir@math.biu.ac.il}

\author{Alexander Olevskii}
\address{School of Mathematical Sciences, Tel-Aviv University, Tel-Aviv 69978, Israel}
\email{olevskii@post.tau.ac.il}

\date{May 18, 2017}
\thanks{Research supported by ISF grants No.\ 225/13 and 455/15 and ERC Starting Grant No.\ 713927.}

\begin{abstract}
We prove that  a positive-definite measure in $\R^n$ 
with uniformly discrete support and discrete closed  spectrum,
is representable as a finite linear combination of Dirac 
combs,  translated and modulated. 
This extends our recent results where we proved this
under the assumption that also the spectrum is uniformly discrete.
As an application we obtain that Hof's quasicrystals with
uniformly discrete diffraction spectra must have a periodic diffraction structure.

\end{abstract}

\maketitle


\section{Introduction}

\subsection{}
       By a \emph{Fourier quasicrystal} one often means a 
discrete measure,  whose Fourier transform  is also a 
discrete measure.
       This concept was inspired by the experimental
       discovery in the middle of 80's of non-periodic 
       atomic structures with diffraction patterns consisting
       of spots. In this context, different versions of ``discreteness'' were discussed,
              see in particular \cite{bom}, \cite{cah},   \cite{mey2}, \cite{dys}.
\par
 Let $\mu$ be a (complex) measure on $\R^n$, supported on a discrete set $\Lambda$: 
 \begin{equation}
	 \mu =\sum_{\lambda\in \Lambda} \mu(\lambda)\delta_{\lambda}, \quad
	 \mu(\lambda)\ne 0.
	 \label{RM.1}
 \end{equation}
We shall suppose that $\mu$ is a slowly increasing measure, which means that
$|\mu|\{x:|x|<r\}$ grows at most polynomially as $r\to\infty$. 
Hence the Fourier transform 
 \[ \hat{\mu}(t):= \sum_{\lambda\in\Lambda} \mu(\lambda)e^{-2\pi i\dotprod{\lambda}{t}} \]
is  well-defined as a temperate distribution on $\R^n$. Assume that also $\ft\mu$ is a 
slowly increasing measure, which is purely atomic and  supported on a countable  set $S$: 
 \begin{equation}
	 \hat{\mu}=\sum_{s\in S} \hat{\mu}(s)\delta_{s}, \quad \hat{\mu}(s)\ne 0.
	 \label{RM.2}
 \end{equation}
 The set $\Lambda$ is then called the \emph{support} of the measure $\mu$, while $S$ is called the \emph{spectrum}.

\subsection{}
In \cite[Problem 4.1(a)]{lag2} the following question was posed:
\par
\emph{Suppose that $\mu$ is a positive measure, whose support $\Lam$ and spectrum $S$ are both   uniformly discrete sets.
           Is it true that $\Lam$ can be covered by a finite union of translates
          of a certain lattice?}
\par
Recall that a set is  uniformly discrete if the distance between 
any two of its points is bounded  below by some positive constant.
\par
In \cite{lo1, lo2} we proved the following result, which answers the above question affirmatively,
and moreover shows that the measure $\mu$ must have a periodic structure:
 \begin{thm}
	 \label{RT.0}
	 Let $\mu$ be a positive (or positive-definite) measure on $\R^n$ satisfying  \eqref{RM.1} and
	 \eqref{RM.2}, and assume that $\Lambda$ and $S$ are both uniformly discrete sets.
	 Then there is a lattice $L$, such that $\mu$ is representable  in the form
	 \begin{equation}
		 \mu=\sum_{j=1}^{N}\sum_{\lambda\in L+\tau_j}P_j(\lambda) \, \delta_\lambda
		 \label{RM.3}
	 \end{equation}
	 for certain vectors $\tau_j$ in $\R^n$ and  trigonometric polynomials $P_j$ $(1 \leq j \leq N)$. 
Moreover, for $n=1$ the result holds even without the
 positivity assumption on the measure $\mu$. 
 \end{thm}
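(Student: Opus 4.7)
The plan is to exploit positive-definiteness together with the uniform discreteness of both the support $\Lambda$ and the spectrum $S$ to extract a genuine lattice $L$ in $\R^n$, and then to read off the trigonometric-polynomial coefficients coset by coset. First I would pass to an almost-periodic picture. Since $\mu$ is positive-definite, $\hat\mu \geq 0$ is a slowly increasing Radon measure concentrated on the uniformly discrete set $S$; a Bochner-type argument then shows that $\mu$ is a strongly almost periodic measure, so that its $\eps$-almost periods form a relatively dense set in $\R^n$ for every $\eps > 0$. Combined with the uniform discreteness of $\Lambda$, this forces $\Lambda - \Lambda$ to be uniformly discrete, that is, $\Lambda$ is a Meyer set, and Meyer's structure theorem then places $\Lambda$ inside a model set coming from some cut-and-project scheme $(\R^n \times H, \widetilde{L}, W)$ with an internal locally compact abelian group $H$ and a relatively compact window $W \subset H$.

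The decisive step is to trivialize the internal space $H$, so that $\Lambda$ actually lies in a finite union $\bigcup_{j=1}^{N}(L+\tau_j)$ of cosets of an honest lattice $L \subset \R^n$. Here the uniform discreteness of $S$ enters in an essential way. From the cut-and-project description, $S$ is contained in the projection $\pi_1(\widetilde{L}^*)$ of the dual lattice in $\R^n \times \widehat{H}$, and this projection is dense in $\R^n$ whenever $H$ is nontrivial; uniform discreteness of $S$ therefore requires the Fourier mass at almost every point of $\pi_1(\widetilde{L}^*)$ to vanish. But $\hat\mu \geq 0$ rules out such wholesale vanishing by cancellation, forcing $H$ to be trivial. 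In one dimension a uniqueness theorem for sparse exponential sums can presumably play the role of positivity, which is consistent with the signed version of the theorem being claimed only for $n=1$.

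Once $\Lambda \subset \bigcup_{j=1}^{N}(L+\tau_j)$, I would decompose $\mu = \sum_{j=1}^{N} \mu_j$ with $\mu_j := \mu|_{L+\tau_j}$. The Fourier transform of each $\mu_j$ is the product of the character $e^{-2\pi i \dotprod{\tau_j}{t}}$ with an $L^*$-periodic distribution in $t$, and combining this with the condition that $\hat\mu = \sum_j \hat{\mu}_j$ is supported on the uniformly discrete set $S$ forces each $\hat{\mu}_j$ to be supported on only finitely many cosets of $L^*$. Equivalently, the sequence $\lambda \mapsto \mu(\lambda)$ on $L+\tau_j$ is sampled from a trigonometric polynomial $P_j$, giving \eqref{RM.3}. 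I expect the main obstacle to be the trivialization of $H$ in the second step: converting rigidity of $S$ into rigidity of the cut-and-project data is where the real Fourier analysis must live, and the fact that positivity cannot be dropped in dimension $n \geq 2$ suggests that it enters here in a delicate, non-cosmetic way.
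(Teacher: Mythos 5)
You should first note that the paper does not actually prove \thmref{RT.0} here: it is quoted from \cite{lo1,lo2}, and the only indication of its proof is the remark in the last section that it combines analytic arguments of the kind developed in Sections 3--5 (the auxiliary measures $\nu_h$, Ingham--Kahane interpolation, the test function of \lemref{AL.1}) with combinatorial results going back to Meyer which extract arithmetic structure from the uniform discreteness of $\Lambda-\Lambda$. Your overall skeleton --- establish that $\Lambda-\Lambda$ is uniformly discrete, invoke Meyer's structure theory, then reduce to a single lattice and read off the trigonometric polynomials coset by coset --- is broadly consistent with that description. The execution of your decisive middle step, however, has a genuine gap.

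The gap is this: knowing only that $\Lambda$ is a subset of a model set $\Lambda(\widetilde{L},W)$ tells you nothing about where the spectrum of an arbitrary measure supported on $\Lambda$ lives. The identity \eqref{RP.9.2} holds only for the special weighted combs \eqref{RP.9.1}, whose weights are samples of a function whose Fourier transform is supported in the window; a general measure carried by a subset of a model set need not have spectrum inside $\pi_1(\widetilde{L}^*)$ at all, so the premise of your trivialization of $H$ fails. Even granting that premise, the inference that ``$\ft{\mu}\geq 0$ rules out wholesale vanishing of the atoms on a dense part of $\pi_1(\widetilde{L}^*)$'' is not a valid principle: \thmref{ET.1} of this very paper constructs a positive-definite measure of exactly the form \eqref{RP.9.1}, with nontrivial internal space, whose nonzero Fourier atoms avoid a dense open set --- the mass at a dense family of points of $\pi_1(\Gamma^*)$ vanishes wholesale, by arranging zeros of $\varphi$, not by cancellation. (That example has non-uniformly-discrete $S$, so it does not contradict the theorem, but it shows your stated mechanism cannot be what makes the proof work.) In \cite{lo2} positivity enters quite differently: it fixes the sign of the measures $\nu_h$, which is what allows the test-function argument of \lemref{AL.1} (or the interpolation bound of \corref{AC.12}) to prove that $S-S$, and dually $\Lambda-\Lambda$, are uniformly discrete; the passage from there to a genuine lattice is a combinatorial statement about the finitely generated group generated by $\Lambda$, not about an internal group in a cut-and-project scheme. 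Two smaller gaps: your first step (almost periodicity plus uniform discreteness of $\Lambda$ forces $\Lambda-\Lambda$ uniformly discrete) is delicate when $\inf_{\lambda}|\mu(\lambda)|=0$, since small atoms need not be shadowed under an $\eps$-almost period; and the unsigned one-dimensional case is left entirely to a ``presumably''.
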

           So the theorem says that under the conditions above,
          the measure $\mu$ is a finite linear combination of Dirac combs,  translated and
          modulated. Conversely, for any measure $\mu$ of the form \eqref{RM.3}, the
	support $\Lam$ and spectrum $S$ are both  uniformly discrete  sets.
\par
           It was also an open problem (see \cite[Problem 4.1(b)]{lag2})
           whether such a result can be proved in the more general
           situation when $\Lambda$, $S$ are assumed to be just discrete 
           closed sets. This problem was addressed in \cite{lo3} where  we proved the following:
 \begin{thm}
	 \label{RT.4}
	 There is a (non-zero)  measure $\mu$ on $\R$ satisfying
	 \eqref{RM.1} and \eqref{RM.2}, such that $\Lambda$ and $S$ are both discrete closed
	 sets, but $\Lambda$ contains only finitely many elements of any arithmetic
	 progression.
 \end{thm}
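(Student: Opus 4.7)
The plan is to exhibit an explicit counterexample $\mu$, necessarily one that breaks the $\Z$-translation invariance enjoyed by the finite Dirac comb combinations appearing in \thmref{RT.0}.

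A first natural attempt is to take $\mu = \sum_k c_k \, \tau_{\alpha_k} \delta_\Z$, where $\delta_\Z = \sum_{n \in \Z} \delta_n$ and $\tau_\alpha$ denotes translation by $\alpha$, with $\alpha_k \to 0$ and $\sum_k |c_k| < \infty$. Applying Poisson summation termwise gives
\[
\hat\mu = \sum_{m \in \Z} \Bigl(\sum_k c_k \, e^{-2\pi i m \alpha_k}\Bigr) \delta_m,
\]
so that $\hat\mu$ is automatically a measure with uniformly discrete spectrum contained in $\Z$ --- half the requirement is immediate. The defect is that $\supp \mu = \bigcup_k (\Z + \alpha_k)$ accumulates at every integer; moreover, any cancellation among the $\delta_{\alpha_k}$'s is $\Z$-translation-invariant, so forcing the support to collapse to a discrete set in this naive way would return a finite union of translates of $\Z$, precisely the situation already covered by \thmref{RT.0}.

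To escape this rigidity, I would introduce a multiscale construction: take $\mu$ as a combination of shifted Dirac combs on distinct lattices $\frac{1}{N_k}\Z$ with $N_k \to \infty$, so that the Fourier support $\bigcup_k N_k \Z \subseteq \Z$ remains uniformly discrete, while the spatial side allows cancellations that are no longer periodic. The goal would be to engineer surviving atoms at a sequence $\lambda_n = n + \beta_n$, with the offsets $\beta_n$ becoming $\Q$-linearly independent modulo $1$ as $n \to \infty$; this independence ensures that any arithmetic progression $\{a + md : m \in \Z\}$ contains only finitely many $\lambda_n$, because the equation $\beta_n + n = a + md$ holding for infinitely many $n$ would force infinitely many $\Q$-linear relations among the $\beta_n$'s, contradicting genericity.

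The main obstacle is the simultaneous realization of three competing conditions: (i) nontriviality of the limit measure (no global cancellation to zero), (ii) actual discreteness of the support despite each building block contributing densely, and (iii) preservation of a discrete closed Fourier spectrum under infinite summation. These impose tight quantitative constraints linking the scales $N_k$, the shifts used at each scale, and the coefficients $c_k$. The heart of the proof would be an inductive choice of these parameters that guarantees all three conditions in the limit, accompanied by the slow growth estimates required for \eqref{RM.1} and \eqref{RM.2} to make sense.
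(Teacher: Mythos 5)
What you have written is a programme rather than a proof: the three ``competing conditions'' you list at the end are precisely what \thmref{RT.4} asserts, and the step that would establish them --- ``an inductive choice of these parameters that guarantees all three conditions in the limit'' --- is neither carried out nor shown to be possible. (For orientation: the theorem is not proved in the present paper either; it is quoted from \cite{lo3}. The closing remarks of the paper do note that all known examples of this kind are based on Poisson's summation formula, which is consistent with your starting point, but the mechanism that actually produces a discrete support is not the one you describe.)

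More importantly, the scheme you outline is internally inconsistent. If $\mu=\sum_k c_k\,\mu_k$ with each $\mu_k$ a translated, modulated copy of the comb $\sum_{n}\delta_{n/N_k}$, supported on $a_k+\frac{1}{N_k}\Z$, then every atom of $\mu$ lies in $\bigcup_k\bigl(a_k+\frac{1}{N_k}\Z\bigr)$, and two of these combs can share a point --- hence cancel there --- only if $a_j-a_k\in\Q$. So either all the shifts lie in a single coset of $\Q$, in which case every surviving atom $\lambda_n=n+\beta_n$ lies in $a_1+\Q$ and the offsets $\beta_n$ are pairwise rationally \emph{dependent}, which destroys the $\Q$-linear-independence mechanism you propose for proving finiteness along arithmetic progressions; or some comb is rationally unrelated to all the others, in which case it is disjoint from them, nothing can cancel on it, and $\supp\mu$ contains the whole of $a_k+\frac{1}{N_k}\Z$, so that the support is dense once $N_k\to\infty$ rather than discrete and closed. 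Modulating the combs cannot rescue the spatial cancellation, since an exponential never vanishes, and translating them only modulates the spectrum. Thus the two requirements you are trying to reconcile --- cancellation of dense combs down to a discrete set, and $\Q$-independent surviving offsets --- exclude one another within this class of building blocks; a correct construction must produce the discreteness of $\Lambda$ and the non-periodicity by genuinely different means, and that is exactly the part of the argument that is missing here.
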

\par
 The latter condition indicates that the measure $\mu$ is ``non-periodic'' in a strong
 sense. In particular, it cannot be obtained as  a finite linear combination of Dirac combs.
 A result of similar type is true also in $\R^n,$ $n>1$.
\par
Moreover, in our construction both  $\mu$ and $\ft\mu$ 
are translation-bounded measures (see Section \ref{sec:prelim} for the
 definition). This implies that 
$\mu$ is an almost periodic measure, whose Fourier
transform $\ft\mu$ is also an almost periodic measure.
By definition, a measure $\mu$ on $\R^n$ is an \define{almost periodic
measure} if for every continuous, compactly supported function
$\varphi$ on $\R^n$, the convolution $\mu \ast \varphi$ is an
almost periodic function in the sense of Bohr.

\subsection{}
In the crystallography community, it seems to be commonly agreed
 that the support  $\Lambda$ should be a uniformly discrete set.
    We remind that Meyer's quasicrystals \cite{mey2},
               which appeared first  under the name ``model sets'' \cite{mey0},
               are uniformly discrete sets, and they support measures
               whose spectra are dense countable sets.
\par
               So it is a natural problem, to  what extent can  the spectrum $S$ of a non-periodic 
quasi\-crystal be discrete, assuming that the support  $\Lambda$ is uniformly discrete?
 In the present paper we address this problem, and  consider quasicrystals with non-symmetric
discreteness assumptions on the support  and  the spectrum.
\par
First we obtain several results which show that, under certain conditions,
if  the spectrum $S$ is a discrete closed set, then in fact $S$  must    be uniformly discrete.
These results thus reduce the situation to the setting in \thmref{RT.0} above, which in turn
allows us to conclude that the measure $\mu$  is representable  in the form
\eqref{RM.3}.
\par
On the other hand, we present an example of a  non-periodic quasicrystal such
that the spectrum $S$ is a nowhere dense countable set.
\par
We also apply our results to Hof's quasicrystals. In this context, we prove that
if a Delone set $\Lam$ of finite local complexity has a uniformly discrete diffraction spectrum,
then the diffraction measure of $\Lam$ has a periodic structure.
\par
Finally, we extend our results to the more general situation, where $\hat{\mu}$
 is a measure which has both a pure point component and a continuous one.


\section{Results}

\subsection{}
 Our first result deals with  the case when $\mu$ is a  positive-definite measure: 
 \begin{thm}
	 \label{RT.5}
	 Let $\mu$ be a positive-definite measure on $\R^n$
	 satisfying \eqref{RM.1} and \eqref{RM.2}. Assume that the support $\Lambda$ is a
	 uniformly discrete set, while the spectrum $S$ is a discrete closed set. Then
	 also $S$ is uniformly discrete, and the measure $\mu$ has the form \eqref{RM.3}. 
 \end{thm}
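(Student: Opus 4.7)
Since Theorem~\ref{RT.0} already handles the case in which both $\Lambda$ and $S$ are uniformly discrete, it suffices to prove that under the present hypotheses the spectrum $S$ is itself uniformly discrete; the representation~\eqref{RM.3} then follows directly from Theorem~\ref{RT.0}.

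My first step is to show that both $\mu$ and $\hat\mu$ are translation-bounded. Let $\eta>0$ be a separation constant for $\Lambda$, and pick $g_0\in C_c^\infty(\R^n)$ supported in $B(0,\eta/2)$ with $g_0(0)=1$; then for any $\lambda^*\in\Lambda$ the function $g_0(\cdot-\lambda^*)$ picks out a single atom of $\mu$, and Parseval yields
\[
 \mu(\lambda^*) \,=\, \int e^{-2\pi i\dotprod{\lambda^*}{t}}\hat g_0(t)\,d\hat\mu(t),
\]
so $|\mu(\lambda^*)|\le \int|\hat g_0|\,d\hat\mu$ uniformly in $\lambda^*$. This, together with uniform discreteness of $\Lambda$, gives translation-boundedness of $\mu$; the positivity $\hat\mu\ge 0$ then implies (e.g.\ via the Argabright--de Lamadrid theorem) that $\hat\mu$ is likewise translation-bounded.

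My second step is to derive a rigidity identity. Fix $f\in C_c^\infty(\R^n)$ real-valued and supported in a ball of radius less than $\eta/2$, and put $\phi=f\ast\tilde f$, so that $\supp\phi\subset B(0,\eta)$ and $\hat\phi=|\hat f|^2\ge 0$. After a mild reduction we may assume $0\in\Lambda$; then $\phi\mu=\mu(0)\,\phi(0)\,\delta_0$, and the duality $\widehat{\phi\mu}=\hat\phi\ast\hat\mu$ becomes the pointwise equation
\[
 \sum_{s\in S}\hat\mu(s)\,|\hat f(t-s)|^2 \;\equiv\; \mu(0)\,\|f\|_2^2,\qquad t\in\R^n.
\]
Every summand on the left is non-negative; specialising $t=s_0\in S$ immediately gives the uniform upper bound $\hat\mu(s_0)\le \mu(0)\|f\|_2^2/|\hat f(0)|^2$ on the masses of $\hat\mu$.

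The main obstacle is to upgrade this rigidity to uniform discreteness of $S$: the identity bounds $\hat\mu(s)$ from above but not from below, so translation-boundedness of $\hat\mu$ alone does not preclude clustering. I would argue by contradiction: assume there exist $s_n\neq s_n'$ in $S$ with $|s_n-s_n'|\to 0$ (necessarily $|s_n|\to\infty$, since $S$ is discrete closed). Varying $f$ so as to localize $|\hat f|^2$ on the scale $|s_n-s_n'|$, one forces both $s_n$ and $s_n'$ to contribute substantially to the left-hand side of the identity at $t=s_n$, and then compares with the uniform translation-bound on $\hat\mu$ to extract a contradiction. Once $S$ is known to be uniformly discrete, Theorem~\ref{RT.0} applies directly and delivers the representation~\eqref{RM.3}.
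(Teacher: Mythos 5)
The first two steps are sound: your Parseval bound on the atoms is essentially the paper's Lemma~\ref{AL.4}, and the identity $\sum_{s\in S}\hat\mu(s)|\hat f(t-s)|^2\equiv\mu(0)\|f\|_2^2$ is correct (it is just the statement that $\hat\phi\ast\hat\mu$ is constant because $\phi\mu$ is a single atom). The genuine gap is the third step, which is where the whole difficulty lives, and the strategy you sketch there cannot be completed. First, there is a hard technical obstruction: your identity requires $\supp f\subset B(0,\eta/2)$ with $\eta$ fixed by $d(\Lambda)$, so by the uncertainty principle $|\hat f|^2$ cannot be localized below the scale $1/\eta$; you cannot ``vary $f$ so as to localize $|\hat f|^2$ on the scale $|s_n-s_n'|$'' when $|s_n-s_n'|\to 0$ without destroying the identity. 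Second, and more fundamentally, no contradiction can follow from the ingredients you have assembled (the identity, translation-boundedness of $\hat\mu$, and the existence of pairs $s_n\neq s_n'$ with $|s_n-s_n'|\to 0$): a Meyer/model-set measure as in \eqref{RP.9.1} is positive-definite, has uniformly discrete support, satisfies your identity and the translation bound, and yet has a \emph{dense} countable spectrum. So the hypothesis that $S$ is a discrete closed set must enter in an essential, global way, and your sketch uses it only to note $|s_n|\to\infty$, which plays no role afterwards. There is also no lower bound available on the individual atoms $\hat\mu(s_n)$, so ``both contribute substantially'' is unjustified; two nearby atoms contributing to a sum of non-negative terms equal to a constant is not in tension with anything.

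For contrast, the paper closes this gap by a dichotomy (Theorem~\ref{AT.5}): if $S$ is not uniformly discrete, it has a \emph{relatively dense} set of accumulation points. The mechanism is the family of auxiliary measures $\nu_h=\sum_{s\in S_h}\hat\mu(s)\overline{\hat\mu(s+h)}\delta_s$ for $h\in S-S$, which are \emph{positive} precisely because $\mu$ is positive-definite, and whose Fourier transforms are supported in $\overline{\Lambda-\Lambda}\subset\{0\}\cup\{|t|\ge a\}$ (Lemma~\ref{AL.6}). Testing $\nu_h$ against a Schwartz function with $\int\varphi=0$, $\spec(\varphi)\subset B_1$, and $\varphi>0$ outside a ball (Lemma~\ref{AL.1}) forces $S_h$ to meet every ball of radius $C/a$; letting $h\to 0$ along $S-S$ produces accumulation points of $S$ in every such ball, contradicting discrete closedness. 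If you want to salvage your approach, you would need an analogue of this pair-correlation input — your single-atom identity sees only $\Lambda\cap B_\eta(0)=\{0\}$, whereas the proof needs the gap in $\Lambda-\Lambda$ played against pairs of spectral points at distance $h$.
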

	 Actually, we will show that if a positive-definite
	 measure $\mu$ with uniformly discrete support $\Lambda$ is not of the form
	 \eqref{RM.3}, then its spectrum $S$ must have a relatively dense set of
	 accumulation points   (see \thmref{AT.5}). 

\subsection{}
In the next result, which holds for dimension $n=1$, the
	 positive-definiteness assumption is replaced by a stronger discreteness condition
	 on the spectrum:
\begin{thm}
	\label{RT.6}
	Let $\mu$ be a measure on $\R$ satisfying \eqref{RM.1} and \eqref{RM.2}. Assume
	that  the support $\Lambda$ is a uniformly discrete set, while
 the spectrum $S$ satisfies the condition
	 \begin{equation}
 \sup_{x\in\R}\# (S \cap [x,x+1]) < \infty. 
		 \label{RM.4}
	 \end{equation}
Then	$S$ is a uniformly discrete set, and $\mu$ is of the form \eqref{RM.3}. 
\end{thm}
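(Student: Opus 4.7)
The representation \eqref{RM.3} is automatic once $S$ is known to be uniformly discrete, since then \thmref{RT.0} in the case $n=1$ (which does not require the positivity hypothesis) gives the decomposition. So the substantive content of \thmref{RT.6} is to promote the bounded-local-density condition \eqref{RM.4} on $S$ to genuine uniform discreteness. I argue this by contradiction. Suppose that $S$ is not uniformly discrete, so there exist sequences $\{s_k\}$, $\{s_k'\}$ in $S$ with $s_k \ne s_k'$, $\hat\mu(s_k), \hat\mu(s_k')\ne 0$, and $\epsilon_k := s_k' - s_k \to 0$; set $N := \sup_x \#(S\cap[x,x+1])$.

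The first ingredient is a band-limited slicing of $\mu$. For any Schwartz function $\phi$ whose Fourier transform $\hat\phi$ is supported in an interval $I$ of length at most $1$, condition \eqref{RM.4} forces
\begin{equation*}
(\mu\ast\phi)(x) \;=\; \sum_{s\in S\cap I} \hat\mu(s)\,\hat\phi(s)\,e^{2\pi i s x}
\end{equation*}
to be a trigonometric sum with at most $N$ terms. Uniform discreteness of $\Lambda$, together with $\mu$ being slowly increasing and $\hat\mu$ a measure, implies that $\mu$ is translation-bounded, so the convolution on the left is a bounded continuous function of $x$. Each local Fourier slice of $\mu$ is therefore a bounded almost periodic function built from at most $N$ frequencies.

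For each $k$, I would choose a bump $\phi_k$ with $\hat\phi_k$ supported in a unit interval containing both $s_k$ and $s_k'$, satisfying $\hat\phi_k(s_k)=\hat\phi_k(s_k')=1$ and vanishing on the remaining points of $S$ in that interval. Existence of such a $\phi_k$ with uniformly controlled Schwartz seminorms follows from \eqref{RM.4}, which caps the number of extra points to annihilate at $N-2$; after extracting a subsequence along which their relative positions stabilise, the bumps can be taken as translates of finitely many fixed profiles. Modulating $\mu$ by $e^{-2\pi i s_k x}$ shifts the spectrum by $-s_k$ and preserves the support, and the band-limited slice collapses to the two-term beat
\begin{equation*}
f_k(x) \;=\; \hat\mu(s_k) + \hat\mu(s_k')\,e^{2\pi i \epsilon_k x}.
\end{equation*}
Extracting a further subsequence along which $\hat\mu(s_k), \hat\mu(s_k')$ converge and the translated supports converge vaguely to some uniformly discrete set $\Lambda_\infty$ produces a limit measure $\mu_\infty$ with uniformly discrete support, whose Fourier transform on the other hand carries two point masses whose $s$-positions have collided to a single location, forcing a non-atomic anomaly incompatible with the standing hypothesis \eqref{RM.2} applied to $\mu_\infty$.

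The principal technical obstacle is making this limit procedure rigorous: the naive pointwise limit $\epsilon_k \to 0$ only retains the constant $\hat\mu(s_k)+\hat\mu(s_k')$ and loses the anomaly that should provide the contradiction, while a rescaling $x \mapsto x/\epsilon_k$ spoils the uniform discreteness of the support. The correct framework, following \cite{lo1, lo2}, should be to extract a limit within the space of translation-bounded measures with the vague topology, carefully tracking both the spectrum and the Fourier--Bohr coefficients. Hypothesis \eqref{RM.4} plays an essential role here: without the bound $N$, an unbounded cluster of frequencies around $s_k$ could absorb the anomaly, and indeed \thmref{RT.4} shows that the conclusion of \thmref{RT.6} fails as soon as the local-density hypothesis is dropped.
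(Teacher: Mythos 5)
Your reduction of the theorem to showing that \eqref{RM.4} upgrades to genuine uniform discreteness of $S$, followed by an application of \thmref{RT.0} for $n=1$ without positivity, is correct and matches the paper. But the core of your argument has a genuine gap, which you in fact name yourself: after modulating by $e^{-2\pi i s_k x}$ and passing to a vague limit of translation-bounded measures, the two colliding atoms at $0$ and $\epsilon_k$ simply merge into a single atom of mass $\lim\,(\hat\mu(s_k)+\hat\mu(s_k'))$. The limit measure $\mu_\infty$ has uniformly discrete support and a perfectly legitimate purely atomic Fourier transform; there is no ``non-atomic anomaly'' and hence no contradiction with \eqref{RM.2}. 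The information that two distinct frequencies were present is exactly what the vague topology forgets, and no choice of subsequence recovers it. A secondary problem: your bumps $\phi_k$ cannot in general be taken with uniformly controlled Schwartz seminorms, since \eqref{RM.4} bounds the \emph{number} of points of $S$ in a unit interval but does not prevent them from clustering arbitrarily close to $s_k$, and interpolating the values $0$ and $1$ at nearly coincident points forces the seminorms to blow up.

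The paper circumvents all of this by never taking a limit in $k$. For each difference $h\in S-S$ it forms the correlation measure $\nu_h$ of \eqref{AL.3.2}, supported on $S_h=S\cap(S-h)$, and shows (\lemref{AL.6}) that $\hat\nu_h$ is carried by the closure of $\Lambda-\Lambda$, hence vanishes on the interval $(0,d(\Lambda))$. A quantitative one-dimensional spectral-gap result (\propref{BP.1}) then gives a \emph{uniform} lower bound $D_{\#}(S_h)\ge c>0$ valid for every $h\in S-S$ at once, and a super-additivity count (\lemref{BL.3}), using $\rho(S)<\infty$, converts this into $\rho(S-S)<\infty$; in particular $0$ is not an accumulation point of $S-S$, i.e.\ $S$ is uniformly discrete. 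The essential idea your proposal is missing is this passage through $S-S$ with a density bound that is uniform in $h$, rather than an attempted compactness argument at $h\to 0$, which degenerates precisely where the contradiction is supposed to appear.
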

Notice that  condition \eqref{RM.4} means that   $S$ is the union of a finite number of uniformly discrete sets. 

\subsection{}
In the following result, a stronger discreteness condition is imposed on the support:
\begin{thm}
	\label{RT.7}
	Let $\mu$ be a measure on $\R^n$ satisfying \eqref{RM.1} and
	\eqref{RM.2}.
	Assume that the set $\Lambda-\Lambda$ is  uniformly discrete, and that
	$S$ is a discrete closed set. Then the same conclusion as in the previous two theorems holds. 
\end{thm}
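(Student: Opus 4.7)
The strategy is to reduce to Theorem~\ref{RT.5} via an auxiliary positive-definite measure built from $\mu$, and then to lift the resulting lattice structure back to $\mu$ itself.

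Consider $\nu := \mu \ast \tilde{\mu}$, where $\tilde{\mu}$ denotes the reflected complex conjugate of $\mu$. Formally $\supp\nu \subseteq \Lambda - \Lambda$, which is uniformly discrete by hypothesis, while on the Fourier side
\[
\hat{\nu}\;=\;|\hat{\mu}|^2\;=\;\sum_{s\in S}|\hat{\mu}(s)|^2\,\delta_s
\]
is a non-negative measure supported on the discrete closed set $S$. Thus $\nu$ is a positive-definite measure whose support is uniformly discrete and whose spectrum is discrete and closed, so Theorem~\ref{RT.5} applies. It yields that $S$ is uniformly discrete and that $\nu$ admits a representation
\[
\nu\;=\;\sum_{j=1}^{N}\sum_{\lambda\in L+\tau_j}Q_j(\lambda)\,\delta_\lambda
\]
for some lattice $L\subset\R^n$, vectors $\tau_j$, and trigonometric polynomials $Q_j$. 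Fourier-transforming this representation shows that $\spec\nu$ is contained in a finite union $L^*+F$ of translates of the dual lattice, where $F$ is the (finite) union of the spectra of the $Q_j$'s; hence $S\subseteq L^*+F$.

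To obtain the representation \eqref{RM.3} for $\mu$, observe first that $\Lambda - \Lambda\subseteq\supp\nu\subseteq L+T$ for a finite set $T$, so fixing any $\lambda_0\in\Lambda$ gives $\Lambda\subseteq L+T'$ with $T':=T+\lambda_0$ finite. Thus $\mu$ is supported on finitely many cosets of $L$ and decomposes as $\mu=\sum_{t\in T'}\mu_t$, where $\mu_t$ is the restriction of $\mu$ to $L+t$. A direct computation gives $\hat{\mu}_t(\xi)=e^{-2\pi i\dotprod{t}{\xi}}A_t(\xi)$, with $A_t(\xi):=\sum_{\ell\in L}\mu(\ell+t)\,e^{-2\pi i\dotprod{\ell}{\xi}}$ an $L^*$-periodic tempered distribution. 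Since $\hat\mu=\sum_{t\in T'}\hat\mu_t$ is supported on $L^*+F$, and since the characters $\{e^{-2\pi i\dotprod{t}{\cdot}}\}_{t\in T'}$ are linearly independent on $L^*$ (the $t$'s being in distinct cosets of $L$), one deduces that each $A_t$ is itself supported on $L^*+F$. Being $L^*$-periodic with support in $L^*+F$, each $A_t$ must have the form $\sum_{f\in F}b_{t,f}\,\delta_{L^*+f}$, and applying the Poisson summation formula gives $\mu_t=P_t\cdot\delta_{L+t}$ for a trigonometric polynomial $P_t$ with frequencies in $F$. Summing over $t\in T'$ yields the representation \eqref{RM.3}.

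The main obstacle is the rigorous construction of $\nu$ in the first step. The slowly increasing hypothesis alone does not guarantee that the atoms $\nu(\{z\})=\sum_{\lambda-\lambda'=z}\mu(\lambda)\overline{\mu(\lambda')}$ converge for $z\in\Lambda-\Lambda$ (the value at $z=0$ being the most delicate), nor that $\sum_{s\in S}|\hat{\mu}(s)|^2\delta_s$ is a tempered measure on $\R^n$. The resolution is to first upgrade $\mu$ and $\hat{\mu}$ from slowly increasing to translation-bounded, exploiting both the uniform discreteness of $\Lambda - \Lambda$ and the discrete closed structure of $S$; once this is in place, $\nu$ may be defined either via a smoothed autocorrelation or directly as the inverse Fourier transform of $\sum_s|\hat\mu(s)|^2\delta_s$, and the remaining arguments above are essentially mechanical.
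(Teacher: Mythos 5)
Your reduction in the first step is a legitimate alternative to the paper's route, once the autocorrelation is constructed correctly. The paper instead proves a dichotomy directly for $\mu$ (\thmref{CT.1}), using the whole family of measures $\nu_h$ from \eqref{AL.3.2} together with the Ingham--Kahane interpolation result (\corref{AC.12}); you apply \thmref{RT.5} to the single measure $\nu$ with $\hat\nu=\sum_{s\in S}|\hat\mu(s)|^2\delta_s$. But note that translation-boundedness does not rescue the pointwise formula $\nu(\{z\})=\sum_{\lambda-\lambda'=z}\mu(\lambda)\overline{\mu(\lambda')}$: already at $z=0$ this series equals $\sum_\lambda|\mu(\lambda)|^2=\infty$. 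The correct construction is exactly \lemref{AL.6} with $h=0$: the positive measure $\nu_0=\sum_s|\hat\mu(s)|^2\delta_s$ is slowly increasing, and its Fourier transform is a translation-bounded measure supported by $\overline{\Lambda-\Lambda}=\Lambda-\Lambda$; taking $\nu$ to be (a reflection of) that Fourier transform gives the positive-definite measure you want, and \thmref{RT.5} then yields that $S$ is uniformly discrete. Up to this point your argument can be made to work.

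The genuine gap is the inclusion $\Lambda-\Lambda\subseteq\supp\nu$ in the second step. Since $\nu$ has to be defined by the smoothing/limiting procedure above, its atom at a point $z\in\Lambda-\Lambda$ is not the divergent sum you wrote but a weighted \emph{average} of the products $\mu(\lambda)\overline{\mu(\lambda')}$ over pairs with $\lambda-\lambda'=z$, and this average can vanish: for a complex $\mu$ by cancellation, and even for a positive $\mu$ whenever the coincidence set $\{\lambda\in\Lambda:\lambda-z\in\Lambda\}$ has density zero. So $\supp\nu$ may be a proper subset of $\Lambda-\Lambda$, you cannot conclude $\Lambda\subseteq L+T'$, and the coset decomposition $\mu=\sum_t\mu_t$ never gets started. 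This structural step --- passing from uniform discreteness of $\Lambda-\Lambda$ and of $S$ to the representation \eqref{RM.3} for $\mu$ itself, without positivity --- is precisely what the paper does \emph{not} reprove: it quotes \thmref{RT.9} from \cite{lo1}. (Your closing manipulation also glosses over why the $L^*$-periodic distributions $A_t$, supported on $L^*+F$, carry no derivatives of Dirac masses; this can be repaired using translation-boundedness of $\mu_t$, but it is part of what \cite{lo1} actually establishes.) Once $S$ is known to be uniformly discrete, the efficient conclusion is simply to invoke \thmref{RT.9}.
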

	 Moreover, we will prove that if $\Lambda-\Lambda$ is  uniformly discrete, but the
	 measure $\mu$ is not of the form
	 \eqref{RM.3}, then again the spectrum $S$ must have a relatively dense set of
	 accumulation points (see \thmref{CT.1}).
\par
\thmref{RT.7} implies the following result communicated to us by Y.\ Meyer:
a ``simple quasicrystal'' cannot support a measure whose spectrum
is a discrete closed set (for the definition of a simple quasicrystal, see \cite{matei-meyer-simple}).

\subsection{}
 The previous results show that if the measure $\mu$
does not have the periodic structure \eqref{RM.3},
  then  the spectrum $S$ must have finite accumulation points.
   However, $S$ need not be dense in any ball, as the following result shows:
\begin{thm}
	\label{RT.15}
	There is a  positive-definite measure $\mu$ on $\R^n$
satisfying \eqref{RM.1} and \eqref{RM.2}, such that:
\begin{enumerate-math}
\item
The  support $\Lam$ is not contained in a finite union of translates of any lattice;
\item
The set $\Lam-\Lam$ is uniformly discrete; 
\item
The spectrum $S$ is a  nowhere dense (countable) set.
\end{enumerate-math}
\end{thm}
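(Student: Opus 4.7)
The plan is to construct an explicit example. The natural strategy is to take $\mu$ to be a positive-definite discrete measure supported on a cut-and-project model Meyer set $\Lambda = \Lambda(W)$, with weights carefully chosen so that $\hat\mu$ is a positive discrete measure whose support $S$ accumulates in a countable, nowhere-dense pattern.

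First, I would fix a rank-$(n+m)$ lattice $\Gamma \subset \R^n \times \R^m$ whose projections $\pi,\pi^\perp$ restrict to injections of $\Gamma$ with dense images in $\R^n$ and $\R^m$ respectively (the generic Meyer setup), together with a compact window $W \subset \R^m$. The model set $\Lambda(W) = \{\pi(\gamma) : \gamma \in \Gamma,\ \pi^\perp(\gamma) \in W\}$ is automatically a Meyer set, giving condition (ii), and for a generic window it is not contained in any finite union of lattice translates, giving (i). Next, I would form $\mu = \sum_{\pi^\perp(\gamma) \in W} h(\pi^\perp(\gamma))\, \delta_{\pi(\gamma)}$ for a weight $h = g * \tilde g$ with $\tilde g(x) = \overline{g(-x)}$, so that $\hat h = |\hat g|^2 \ge 0$. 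Applying Poisson summation on $\Gamma$ yields $\hat\mu = |\Gamma|^{-1}\sum_{(\xi,\eta) \in \Gamma^*} |\hat g(\eta)|^2 \delta_\xi$, a positive-definite discrete measure whose atoms sit at points $\pi(\gamma^*) \in \pi(\Gamma^*)$.

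The main obstacle lies in condition (iii): arranging the spectrum $S = \{\pi(\gamma^*) : \hat g(\pi^\perp(\gamma^*)) \neq 0\}$ to be nowhere dense in $\R^n$. Since $\pi(\Gamma^*)$ is dense (by duality to the density of $\pi^\perp(\Gamma)$ required for Meyer-set geometry) and, for any compactly supported $g$, the function $\hat g$ is entire of exponential type with only discretely many zeros by Paley--Wiener, such a straightforward choice of $g$ cannot eliminate enough Fourier atoms to leave a nowhere-dense $S$. The construction must therefore go beyond a single compactly supported weight, most plausibly via an infinite series $\mu = \sum_k c_k \mu_k$ of modulated cut-and-project measures with modulation frequencies $\alpha_k \to 0$ and positive coefficients $c_k$ with $\sum c_k < \infty$, so that the combined spectrum becomes a countable accumulating union of translates whose closure still has empty interior.

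The technical heart of the proof is then to verify that, through the interaction of cancellations among the terms $\mu_k$, the support $\Lambda$ of $\mu$ remains a genuinely aperiodic subset of the ambient Meyer set, that the series converges to a locally finite measure, and that the positive definiteness and nowhere-density of the resulting spectrum are preserved simultaneously. Since a Meyer set support generically forces a dense Fourier spectrum, it is precisely this delicate Fourier engineering --- producing enough vanishing of $\hat h$ (or an analogous weight structure) on the dense set $\pi^\perp(\Gamma^*)$ while retaining positivity and discreteness --- that constitutes the core difficulty of the construction.
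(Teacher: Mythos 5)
Your setup --- a weighted Dirac comb on a cut-and-project set, with the window weight chosen so that the Fourier-side weight is $|\hat g|^2\ge 0$ --- coincides with the paper's starting point, and you correctly identify the obstacle: the Fourier-side weight is an entire function of exponential type evaluated on the dense set $p_2(\Gamma^*)$. But your proposal then takes a wrong turn. You assert that a single compactly supported weight ``cannot eliminate enough Fourier atoms to leave a nowhere-dense $S$'' and that one must instead superpose infinitely many modulated cut-and-project measures $\sum_k c_k\mu_k$. This is backwards: the construction succeeds with a single weight, and your proposed alternative is both unsubstantiated and implausible --- the spectrum of each $\mu_k$ is (a priori) a dense subset of a translate of $p_1(\Gamma^*)$, and a countable union of dense sets is dense, so modulations with $\alpha_k\to 0$ do not by themselves yield a nowhere dense spectrum. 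You also stop exactly at what you yourself call the ``core difficulty,'' so condition (iii) is never actually established.

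The missing ingredient is the Beurling--Malliavin theorem. Take $m=1$. Fix a dense sequence $x_j\in\R^n$ and balls $B_j$ centered at $x_j$ with $\sum_j\mes(B_j)$ small enough that the model sets $Q_j:=\{p_2(\gamma^*):\gamma^*\in\Gamma^*,\ p_1(\gamma^*)\in B_j\}\subset\R$ have uniform densities summing to less than $\eps$. After truncating, $Q_j':=Q_j\setminus(-T_j,T_j)$ with $T_j$ chosen suitably large, the union $Q:=\bigcup_j Q_j'$ has Beurling--Malliavin upper density below $\eps$, so there is a nonzero Schwartz function $\psi$ with $\spec(\psi)\subset(0,\eps)$ vanishing on $Q$; setting $\varphi=|\psi|^2$ keeps positivity and the vanishing. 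An entire function of small exponential type can indeed have a zero set of positive (small) density --- this is precisely what Paley--Wiener intuition misses and Beurling--Malliavin supplies. Finally, since only finitely many points of $\Gamma^*$ lie in each $B_j\times(-T_j,T_j)$, one shrinks $B_j$ to a smaller ball $\Omega_j$ avoiding their $p_1$-projections; the spectrum then misses the open dense set $\bigcup_j\Omega_j$. Without this mechanism (or an equivalent one) your argument does not prove the theorem.
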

To prove this we base on Meyer's construction,
but with an additional modification.

\subsection{}
Our results can be applied to quasicrystals in Hof's sense \cite{hof}.
\par
Recall that a set $\Lambda\subset\R^n$ is called a \emph{Delone set} if it is  uniformly discrete and also
relatively dense. A Delone set $\Lambda$ is said to be of \emph{finite local complexity} if the difference
set $\Lambda-\Lambda$ is a discrete closed set. This means that  $\Lambda$ has, up to translations,
only a finite number of local patterns of any given size. 
\par
An \emph{autocorrelation measure} $\gamma_\Lam$ of the set $\Lambda$ is any weak limit point of the
measures 
	 \begin{equation}
 (2R)^{-n}\sum_{\lambda,\lambda'\in \Lam \cap [-R,R]^{n}}\delta_{\lambda'-\lambda}
		 \label{RT.8.1}
	 \end{equation}
as $R\to\infty$.  An autocorrelation measure $\gamma_\Lam$ is always
positive-definite, and so its Fourier transform $\hat{\gamma}_\Lam$ is a positive
measure, called a \emph{diffraction measure} of $\Lambda$.   If the measure
$\hat{\gamma}_\Lambda$ is purely atomic, then 
its support $S$ is called a
\define{diffraction spectrum} of $\Lambda$.
\par
The following result answers a question posed in \cite[Problem 4.2(a)]{lag2}:
\begin{thm}
	\label{RT.8}
Suppose that
\begin{enumerate-math}
\item
 $\Lambda\subset\R^n$ is a Delone set of finite local complexity;
\item
	$\gamma_\Lam$ is an autocorrelation measure of $\Lambda$; and
\item
	The diffraction spectrum $S$ (the support of $\hat{\gamma}_\Lam$) is a uniformly discrete set. 
\end{enumerate-math}
Then  $S$ is contained in a finite union of translates of some lattice,
and the diffraction measure $\hat{\gamma}_\Lambda$ has the form \eqref{RM.3}.
\end{thm}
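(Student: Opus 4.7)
The plan is to apply \thmref{RT.5} directly to the diffraction measure $\hat{\gamma}_\Lam$, taking it in the role of the measure $\mu$ from \thmref{RT.5}. The whole content of the argument reduces to verifying that the hypotheses of \thmref{RT.5} are satisfied in this setting; once this is done, the conclusion of \thmref{RT.8} follows immediately from the representation \eqref{RM.3} applied to $\hat{\gamma}_\Lam$, whose support is $S$.

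First I would check the positive-definiteness required in \thmref{RT.5}. By construction, $\gamma_\Lam$ is a positive-definite measure (each prelimit measure in \eqref{RT.8.1} is positive-definite, being of the form $\nu_R \ast \widetilde{\nu_R}$ for $\nu_R = (2R)^{-n/2} \sum_{\lambda \in \Lam \cap [-R,R]^n} \delta_\lambda$, and positive-definiteness passes to weak limits). In addition, $\gamma_\Lam$ is itself a positive measure, as a weak limit of positive measures. Dualizing, $\hat{\gamma}_\Lam$ is a positive measure (Fourier transform of a positive-definite tempered distribution) and also a positive-definite distribution (Fourier transform of a positive tempered measure). Hence $\hat{\gamma}_\Lam$ is a positive-definite measure, as required.

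Next I would check the two discreteness hypotheses of \thmref{RT.5} for the measure $\hat{\gamma}_\Lam$. Its support is $S$, which is uniformly discrete by hypothesis~(iii) of the theorem, providing the "uniformly discrete support" hypothesis. Its spectrum (the support of its Fourier transform) is, up to reflection, the support of $\gamma_\Lam$. Each prelimit measure in \eqref{RT.8.1} is supported in the set $\Lam-\Lam$, which is a fixed closed subset of $\R^n$; hence $\supp(\gamma_\Lam) \subseteq \Lam - \Lam$. By finite local complexity of $\Lam$ (hypothesis~(i)), $\Lam - \Lam$ is a discrete closed set, and therefore so is $\supp(\gamma_\Lam)$. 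This supplies the "discrete closed spectrum" hypothesis of \thmref{RT.5}.

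Having checked all hypotheses, I would invoke \thmref{RT.5} for $\mu = \hat{\gamma}_\Lam$ to conclude that $\hat{\gamma}_\Lam$ is representable in the form \eqref{RM.3}, so that its support $S$ is contained in a finite union of translates of the lattice $L$ appearing in \eqref{RM.3}. There is no genuine obstacle here: the proof is essentially bookkeeping which reinterprets \thmref{RT.5} in the language of Hof's diffraction setup, and the key conceptual input is the observation that finite local complexity of $\Lam$ is exactly what forces the spectrum of $\hat{\gamma}_\Lam$ to be discrete closed, thereby opening the door to \thmref{RT.5}.
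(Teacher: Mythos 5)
Your proposal is correct and follows essentially the same route as the paper: the paper's proof (given for the slightly more general \thmref{DT.1}) likewise applies \thmref{RT.5} to the measure $\hat{\gamma}_\Lambda$, using the positivity of $\gamma_\Lambda$ to get positive-definiteness of $\hat{\gamma}_\Lambda$, hypothesis (iii) for the uniformly discrete support, and the inclusion $\supp(\gamma_\Lambda)\subseteq\Lambda-\Lambda$ together with finite local complexity for the discrete closed spectrum. Your verification of the hypotheses matches the paper's argument point for point.
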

A slightly more general version of this result will be given in \thmref{DT.1}.
The same conclusion is true if the set $\Lambda-\Lambda$ is uniformly discrete, 
and the diffraction spectrum $S$ is a discrete closed set (see \thmref{DT.2}).

\subsection{}
We also consider discrete measures $\mu$, whose
Fourier transform $\hat{\mu}$ is a measure which has both a pure point
component and a continuous one. We can extend our previous results
to this more general situation using the following:
\begin{thm}
	\label{RT.10} 
	Let $\mu$ be a measure on $\R^n$ with uniformly discrete
support $\Lambda$, and assume that  $\hat{\mu}$ is a (slowly increasing)
measure. Then the discrete part of $\hat{\mu}$ is the Fourier transform
of another measure $\mu'$, whose support $\Lambda'$ is also a uniformly discrete set.
\end{thm}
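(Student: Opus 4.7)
The plan is to construct $\mu'$ as the strongly almost periodic component of $\mu$, and to use the uniform discreteness of $\Lambda$ to control its support.

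First, I would establish that $\mu$ is a translation-bounded measure. Choose a Schwartz function $g\in\mathcal{S}(\R^n)$ with $\hat{g}\in C_c^{\infty}(\R^n)$ and $\hat{g}(0)=1$; then $\hat{\mu}\cdot\hat{g}$ is a compactly supported finite complex measure, and its inverse Fourier transform $\mu\ast g$ is a bounded continuous function. Since $\Lambda$ is uniformly discrete, a careful manipulation of $\mu\ast g$---or, in order to handle the signs and phases of the coefficients, of the autocorrelation $\mu\ast\widetilde{\overline{\mu}}$---should yield $\sup_{x}|\mu|(B(x,1))<\infty$. With $\mu$ translation-bounded and $\hat{\mu}$ a measure, $\mu$ is then an almost periodic measure in the sense that $\mu\ast\varphi$ is Bohr almost periodic for every $\varphi\in C_c(\R^n)$; the Eberlein-type decomposition of such measures produces $\mu=\mu'+\mu''$ in which $\hat{\mu'}$ equals the pure-point part $\sigma_d$ of $\hat{\mu}$ and $\hat{\mu''}$ equals its continuous part. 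The Fourier side of the conclusion is thus satisfied by construction.

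The most delicate step is to show that $\Lambda':=\supp\mu'$ is uniformly discrete. I would represent $\mu'$ as a weak-$\ast$ limit of Bohr--Fej\'er averages of phase-modulated translates of $\mu$, namely of finite sums of the form $\sum_{j}a_{j}\,e^{2\pi i\dotprod{s_{j}}{\cdot}}\,\mu(\cdot-y_{j})$. Each summand is a measure supported on a translate of $\Lambda$, which has the same separation constant $\delta>0$ as $\Lambda$; a vague-topology compactness argument should then force $\Lambda'$ to be contained in a set of separation at least $\delta$, hence uniformly discrete.

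The main obstacle is precisely this last point. The approximating finite sums are supported on \emph{finite unions} of translates of $\Lambda$, whose joint separation may a priori be arbitrarily small, so the uniform discreteness of $\Lambda'$ cannot be read off from the approximants directly. To rule out accumulation in the limit, one must exploit both the almost periodicity of $\mu$ and the uniform discreteness of $\Lambda$ to show that the Bohr--Fej\'er averaging is rigid enough to concentrate its mass, asymptotically, on a set of separation at least $\delta$.
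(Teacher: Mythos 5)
The gap you flag at the end is the real one, and your proposed route (Eberlein decomposition plus Bohr--Fej\'er averaging) does not close it: the approximants $\sum_j a_j e^{2\pi i\dotprod{s_j}{\cdot}}\mu(\cdot-y_j)$ are supported on finite unions of translates of $\Lambda$, and nothing in the averaging procedure prevents these translates from interleaving at arbitrarily small separation, so no separation constant for $\Lambda'$ comes out. You correctly sense that some rigidity must be exploited, but you do not supply it, and as written the key conclusion $d(\Lambda')>0$ is not proved.

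The paper avoids averaging altogether. Using Wiener's lemma, it shows (Lemma \ref{FL.2}) that for the continuous measure $\hat\mu_c$ one can choose frequencies $\omega_k$ with $e^{-2\pi i\dotprod{\omega_k}{x}}\hat\mu_c\to 0$ as temperate distributions; passing to a subsequence so that $e^{2\pi i\dotprod{\omega_k}{s}}$ converges for each $s\in S$, and then forming differences $\omega_j-\omega_{k_j}$ with $k_j$ growing fast, one gets modulations that still annihilate $\hat\mu_c$ in the limit but tend to $1$ on $S$. Consequently $e^{2\pi i\dotprod{\omega_j-\omega_{k_j}}{x}}\hat\mu\to\hat\mu_d$, and on the physical side this says that the \emph{single} translates $\mu_j(t)=\mu(t+\omega_j-\omega_{k_j})$ converge to $\mu'$. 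Each $\mu_j$ is supported on one translate of $\Lambda$, with the same separation constant $d(\Lambda)$, so the weak limit is supported on a set with $d(\Lambda')\ge d(\Lambda)$ --- exactly the step your finite sums cannot deliver. (Your preliminary step, translation-boundedness of $\mu$, is fine and is Lemma \ref{AL.4} in the paper.) If you want to salvage your approach you would need to replace the Bohr--Fej\'er averages by these single modulated translates, at which point you have essentially reproduced the paper's argument; the Eberlein decomposition by itself identifies $\hat{\mu'}=\hat\mu_d$ but gives no control on $\supp\mu'$.
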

Moreover, if $\Lambda-\Lambda$ is a uniformly discrete set,
then also $\Lambda'-\Lambda'$ is uniformly discrete.
\par
By applying the previous results to this new measure $\mu'$, one can obtain
versions of the results for measures $\mu$ with non pure point Fourier transform
(see \thmref{FT.2}).


\section{Preliminaries}
\label{sec:prelim}

\subsection{Notation}
By $\dotprod{\cdot}{\cdot}$ and $|\cdot|$ we  denote the Euclidean 
scalar product and norm in $\R^n$. 
The open ball of radius $r$ centered at the origin is denoted
 $B_r := \{x \in \R^n: |x|<r \}$.
\par
A set $\Lambda \subset \R^n$ is said to be a \emph{discrete closed set} if $\Lambda$ has only finitely many 
points in any bounded set. The set $\Lambda$ is called \emph{uniformly discrete} if
\begin{equation}
\label{uddef}
                d(\Lam):=\inf_{\lam,\lam'\in\Lam, \lam\neq\lam'} |\lam-\lam'| > 0.
\end{equation}
The set $\Lambda$ is said to be  \emph{relatively dense}  if there is $R > 0$ such that every ball of
radius $R$ intersects $\Lam$. 
\par
By  a (full-rank) lattice $L \subset \R^n$ we mean the image of $\Z^n$ under some
invertible linear transformation $T$. The determinant $\det(L)$ is equal to $|\det (T)|$.
The dual lattice $L^*$ is the set of all vectors $\lambda^*$ such that $\dotprod{\lambda}{\lambda^*}
 \in \Z$, $\lambda \in L$.
\par
By a ``distribution'' we  will mean a temperate distribution on $\R^n$.
By a  ``measure'' we mean a complex, locally finite measure (usually infinite) which is
assumed to be \emph{slowly increasing}. By definition, a measure $\mu$ is slowly
increasing if  there is a constant $N$ such that 
$|\mu|(B_R)=O(R^N)$ as $R\to \infty.$
The measure $\mu$  is called \emph{translation-bounded} if
\begin{equation}\label{tr-bdd-def}
\sup_{x \in \R^n} |\mu|(x+B_1) < \infty.
\end{equation}
\par
Any translation-bounded measure  is slowly increasing, and any
slowly increasing measure  is a temperate distribution. Remark
that for a \emph{positive} measure to be a  temperate distribution, it is 
also necessary to be slowly increasing, but this is not true
for complex (or real, signed) measures.
\par
By  the ``support'' of a pure point measure $\mu$ we mean the  
countable set of the non-zero atoms of $\mu$. This should not be confused with the
notion of support in the sense of distributions, which is always a closed set.
\par
The Fourier transform on $\R^n$ will be normalized as follows:
$$\ft \varphi (t)=\int_{\R^n} \varphi (x) \, e^{-2\pi i\langle t,x\rangle} dx.$$
\par
We denote by $\supp(\varphi)$ the closed support of a Schwartz function $\varphi$,
and by $\spec (\varphi)$ the closed support of its Fourier transform $\ft\varphi$.
\par
If $\alpha$ is a temperate distribution, and $\varphi$ is a Schwartz function on $\R^n$,
then $\dotprod{\alpha}{\varphi}$ will denote the action of $\alpha$ on ${\varphi}$.
The Fourier transform $\ft\alpha$ of the distribution $\alpha$ is defined by 
$\dotprod{\ft\alpha}{\varphi} = \dotprod{\alpha}{\ft\varphi}$.
\par
A distribution $\alpha$ is called \emph{positive} if  $\dotprod{\alpha}{\varphi} \geq 0$
for any Schwartz function $\varphi \geq 0$. It is well-known that if $\alpha$ is a positive distribution,
then it is a positive measure. A distribution $\alpha$ is called \emph{positive-definite} if $\ft\alpha$ is a positive 
distribution. 
\par
For a set $A \subset \R^n$ we denote by $\# A$ the number of elements in $A$, and
by $\mes(A)$  or $|A|$ the Lebesgue measure of $A$.

\subsection{Measures}
We  will need some basic facts about measures in $\R^n$.
\begin{lem}
	\label{AL.4}
	Let $\mu$ be a measure on $\R^n$,  whose support $\Lambda$ is a uniformly
	discrete set. Assume that $\ft{\mu}$ is a slowly increasing measure. Then 
	\begin{equation}
	 \sup_{\lambda\in\Lambda} |\mu(\lambda)|<\infty,
		\label{AL.4.1}
	\end{equation}
	and so $\mu$ is a translation-bounded measure. 
\end{lem}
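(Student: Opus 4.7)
The plan is to exploit the uniform discreteness of $\Lambda$ by constructing a localized test function that isolates a single atom of $\mu$, then transfer the resulting evaluation to the Fourier side where the slow growth of $\hat\mu$ furnishes the required uniform bound.

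Set $d := d(\Lambda) > 0$ and fix once and for all a function $\psi \in C_c^\infty(\R^n)$ with $\supp(\psi) \subset B_{d/2}$ and $\psi(0) = 1$. For each $\lambda \in \Lambda$, consider the translate $\psi_\lambda(x) := \psi(x-\lambda)$. Because any two distinct points of $\Lambda$ are at distance at least $d$, the support of $\psi_\lambda$ meets $\Lambda$ only at $\lambda$ itself, so that
\[
\langle \mu, \psi_\lambda\rangle = \sum_{\lambda' \in \Lambda} \mu(\lambda')\,\psi(\lambda'-\lambda) = \mu(\lambda).
\]
Now apply Parseval's identity for temperate distributions: since $\psi_\lambda$ is Schwartz and $\hat\mu$ is a (slowly increasing) measure,
\[
\mu(\lambda) = \langle \mu, \psi_\lambda\rangle = \langle \hat\mu, \widehat{\psi_\lambda}\rangle = \int_{\R^n} e^{-2\pi i \langle t, \lambda\rangle}\,\hat\psi(t)\,d\hat\mu(t).
\]

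The function $\hat\psi$ is Schwartz, so $|\hat\psi(t)| \leq C_M (1+|t|)^{-M}$ for every $M$. Using the slow increase of $\hat\mu$, choose $N$ with $|\hat\mu|(B_R) = O(R^N)$ and take $M$ large enough (say $M = N+n+1$); a dyadic decomposition of $\R^n$ into annuli $B_{k+1}\setminus B_k$ then gives
\[
|\mu(\lambda)| \leq \int_{\R^n} |\hat\psi(t)|\,d|\hat\mu|(t) \leq C_M \sum_{k\geq 0} (1+k)^{-M}\,|\hat\mu|(B_{k+1}) < \infty,
\]
and crucially this bound is independent of $\lambda$, which yields \eqref{AL.4.1}.

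The translation-boundedness is then a routine consequence: uniform discreteness of $\Lambda$ produces a constant $K = K(d,n)$ such that $\#(\Lambda \cap (x+B_1)) \leq K$ for every $x\in\R^n$, and combining with the uniform bound on atoms gives $|\mu|(x+B_1) \leq K \sup_\lambda|\mu(\lambda)| < \infty$. There is no serious obstacle here; the only point requiring any care is the absolute convergence of the oscillatory integral, but the $\lambda$-dependence appears only in a unimodular phase so the slow increase of $\hat\mu$ paired with the Schwartz decay of $\hat\psi$ does the job at once.
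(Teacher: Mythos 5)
Your proof is correct and follows essentially the same route as the one the paper defers to (\cite[Lemma 2]{lo2}): isolate a single atom by testing against a bump function supported in a ball of radius $d(\Lambda)/2$ about $\lambda$, pass to the Fourier side by Parseval, and play the Schwartz decay of $\hat{\psi}$ against the polynomial growth of $|\hat{\mu}|$ to get a bound independent of $\lambda$. The only cosmetic point is that with the paper's convention $\dotprod{\hat{\alpha}}{\varphi}=\dotprod{\alpha}{\hat{\varphi}}$ the test function appearing on the Fourier side is the \emph{inverse} transform of $\psi_\lambda$ rather than $\widehat{\psi_\lambda}$, but its modulus is still $|\hat{\psi}|$ up to reflection, so the estimate and the conclusion are unaffected.
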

This can be proved in a similar way to \cite[Lemma 2]{lo2}.
\begin{lem}
	\label{AL.10}
Let $\mu$ be a measure on $\R^n$, whose support $\Lambda$ is a uniformly discrete
set. Assume that $\hat{\mu}$ is a slowly increasing measure, with at least one non-zero
atom. Then $\Lambda$ is a relatively dense set in $\R^n$. 
\end{lem}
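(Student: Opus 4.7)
The plan is a contradiction argument. Suppose $\Lambda$ is not relatively dense; then there exist $x_k \in \R^n$ and $R_k \to \infty$ with $\Lambda \cap B_{R_k}(x_k) = \emptyset$. I translate each ``hole'' to the origin by setting $\nu_k := \sum_{\lambda \in \Lambda} \mu(\lambda)\,\delta_{\lambda - x_k}$, which vanishes identically on $B_{R_k}$. By \lemref{AL.4} the coefficients $\mu(\lambda)$ are uniformly bounded, so the $\nu_k$ are translation-bounded with a uniform constant, and a direct Fourier computation yields $\hat\nu_k = e^{2\pi i \dotprod{\cdot}{x_k}} \hat\mu$ as measures.

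The first key step is to show $\nu_k \to 0$ as temperate distributions. For a Schwartz $\phi$, I partition $\R^n$ into unit cubes $Q_j$ and estimate $|\dotprod{\nu_k}{\phi}| \leq C \sum_{Q_j \cap B_{R_k} = \emptyset} \sup_{Q_j} |\phi|$; since $\sum_j \sup_{Q_j}|\phi|$ converges, its tail tends to $0$, giving $\dotprod{\nu_k}{\phi} \to 0$. Consequently $\dotprod{\hat\nu_k}{\psi} = \dotprod{\nu_k}{\hat\psi} \to 0$ for every Schwartz $\psi$.

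The second key step is to contradict this using an atom. Let $s_0$ be an atom of $\hat\mu$ with mass $c \neq 0$. Since $|\hat\mu|$ is locally finite and $\{s_0\}$ carries positive mass $|c|$, continuity of $|\hat\mu|$ at the point $\{s_0\}$ allows me to choose $\rho > 0$ with $|\hat\mu|(\overline{B_\rho(s_0)}\setminus\{s_0\}) < |c|/2$. Fix a Schwartz bump $\psi$ with $\supp(\psi) \subset B_\rho(s_0)$, $\psi(s_0)=1$ and $\|\psi\|_\infty \leq 1$. Separating the atomic contribution,
\[
\dotprod{\hat\nu_k}{\psi} = c\, e^{2\pi i \dotprod{s_0}{x_k}} + \int_{B_\rho(s_0)\setminus\{s_0\}} e^{2\pi i \dotprod{t}{x_k}} \psi(t)\,d\hat\mu(t),
\]
so $|\dotprod{\hat\nu_k}{\psi}| \geq |c| - |c|/2 = |c|/2 > 0$ uniformly in $k$, the desired contradiction.

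I expect the main obstacle to be that $\hat\mu$ is only assumed to be a slowly increasing measure, not a pure point one: the atom $s_0$ could be embedded in a continuous (possibly singular) part of $\hat\mu$, or be an accumulation point of further atoms, and hence Riemann--Lebesgue-type decay of the ``rest'' against the phases $e^{2\pi i \dotprod{t}{x_k}}$ is not available. The remedy is purely quantitative: localize $\psi$ in a neighborhood whose punctured $|\hat\mu|$-mass is strictly smaller than $|c|$, so that the atomic contribution cannot be cancelled no matter what phase factor $x_k$ produces.
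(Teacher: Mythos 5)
Your proof is correct and takes essentially the same route as the paper's: both arguments kill every atom of $\hat\mu$ by pairing a bump concentrated near the atom against a modulated/translated copy of $\mu$, using \lemref{AL.4} for the uniform bound on the coefficients and the arbitrarily large holes in $\Lambda$ to force that pairing to vanish (your fixed $\rho$ with punctured mass $<|c|/2$ plays exactly the role of the paper's uniform-in-$x$ limit as $\delta\to 0$). The only slip is cosmetic: in the tail estimate the sum should run over the unit cubes \emph{not contained in} $B_{R_k}$ rather than those disjoint from it, since a cube meeting the support of $\nu_k$ may still straddle the boundary of the hole.
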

\begin{proof}
	By Lemma \ref{AL.4}, the measure $\mu$ is translation-bounded. Let us suppose that
	$\Lambda$ is not relatively dense, and show that this implies that $\hat{\mu}(\{s\})=0$
for every $s \in \R^n$.
\par
	Choose a Schwartz function $\varphi$ whose Fourier transform $\hat{\varphi}$ has compact
	support, and $\hat{\varphi}(0)=1$. For each $0<\delta<1$ define
		$\varphi_\delta(t):= \delta^n\varphi(\delta t)$. Then we have 
	\begin{equation}
		\hat{\mu}(\{s\})=\lim_{\delta\to 0}\int \hat{\varphi}_\delta(t-s)e^{2\pi
		i\dotprod{x}{t-s} }d\hat\mu(t)
		\label{AL.10.1}
	\end{equation}
	uniformly with respect to $x\in\R^n$. On the other hand, 
	\begin{equation}
		\int \hat{\varphi}_\delta(t-s)e^{2\pi i\dotprod{x}{t-s}}d\hat{\mu}(t) = 
		\int \varphi_\delta(x-y)e^{-2\pi i\dotprod{s}{y}}d\mu(y).
		\label{AL.10.2}
	\end{equation}
	If $\Lambda$ is not relatively dense, then for any $R>0$ there is $x\in \R^n$ such
	that the ball $x+B_{R}$ does not intersect $\Lambda$. Using the
	translation-boundedness of $\mu$ this implies that for any $\delta>0$, there are
	values of $x$ for which the right-hand side of \eqref{AL.10.2} is arbitrarily
	close to zero. Hence the limit in \eqref{AL.10.1} must be zero, which proves the
	claim.
\end{proof}

\subsection{Interpolation}
For a compact set $\Omega \subset \R^n$, we denote by $\B(\Omega)$ the   Bernstein space
consisting of all bounded, continuous functions $f$ on $\R^n$ such that the distribution $\ft f$
is supported by $\Omega$. 
A set $\Lam\subset\R^n$ is called an \emph{interpolation set} for the space $\B(\Omega)$ if for every bounded sequence
 $\{c_\lam\}$, $\lam\in\Lam$, there exists at least one $f \in \B(\Omega)$ satisfying
$f(\lam)=c_\lam$ $(\lam\in\Lam)$. It is well-known that such
$\Lam$ must be a uniformly discrete set.
\par
The following result is due to Ingham for $n=1$, and  Kahane for $n>1$, see
\cite{ou3}. 
\begin{thm}
	There is a constant $C$ which depends on the dimension $n$ only, such that if
	$\Lambda$ is a uniformly discrete set in $\R^n$, $d(\Lambda)\ge a> 0$, then
	$\Lambda$ is an interpolation set for $\mathcal{B}(\Omega)$ where $\Omega$ is any closed
	ball of radius $C/a$. 
	\label{AL.11}
\end{thm}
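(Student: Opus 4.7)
The plan is to fix a Schwartz ``bump'' $\varphi$ with $\widehat{\varphi}$ supported in a ball of radius $C/a$, and to seek the interpolating function as a superposition of translates
\[
f(x) = \sum_{\lambda \in \Lambda} d_\lambda \, \varphi(x-\lambda),
\]
with coefficients $d_\lambda$ determined by imposing the conditions $f(\lambda') = c_{\lambda'}$ for a prescribed bounded sequence $\{c_{\lambda'}\}_{\lambda' \in \Lambda}$. Evaluating the sum at $\lambda' \in \Lambda$ produces a linear system in $\ell^\infty(\Lambda)$, and if $\varphi$ is normalized by $\varphi(0) = 1$, this system takes the form $(I + A) d = c$, where $A$ is the bounded operator on $\ell^\infty(\Lambda)$ with entries $A_{\lambda',\lambda} = \varphi(\lambda'-\lambda)$ for $\lambda \neq \lambda'$ and zero on the diagonal. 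The whole argument then reduces to choosing $\varphi$ so that $\|A\|_{\ell^\infty \to \ell^\infty} < 1$, inverting $I+A$ by a Neumann series, and recovering $f$ as a bounded element of $\mathcal{B}(\Omega)$.

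To set things up concretely, I would fix a prototype $\psi \in \mathcal{S}(\R^n)$ with $\psi(0)=1$, $\spec(\psi) \subset \overline{B_1}$, and $|\psi(x)| \leq (1+|x|)^{-n-1}$ (obtained, for example, as a suitably normalized convolution square of a radial bump in the frequency side). For a small parameter $\delta > 0$ to be chosen, set $\varphi(x) := \psi(x/\delta)$, so that $\spec(\varphi) \subset \overline{B_{1/\delta}}$ and $|\varphi(x)| \leq (1+|x|/\delta)^{-n-1}$. For any $d \in \ell^\infty(\Lambda)$ the superposition above then converges uniformly and defines a bounded continuous function in $\mathcal{B}(\overline{B_{1/\delta}})$; in particular, once $d$ is known, we obtain an interpolant with the desired spectrum.

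The main (and essentially only) obstacle is to control
\[
\|A\|_{\ell^\infty \to \ell^\infty}
   = \sup_{\lambda' \in \Lambda} \sum_{\lambda \neq \lambda'} |\varphi(\lambda-\lambda')|,
\]
uniformly over all $\Lambda$ with $d(\Lambda) \geq a$. The uniform discreteness assumption implies, by a standard packing argument, that the number of points of $\Lambda$ in any spherical shell $\{x : ka \leq |x - \lambda'| < (k+1)a\}$ is at most $C_n k^{n-1}$. Hence
\[
\|A\|_{\ell^\infty \to \ell^\infty}
   \leq C_n \sum_{k \geq 1} k^{n-1} \bigl(1 + ka/\delta\bigr)^{-n-1},
\]
and the right-hand side tends to $0$ as $\delta/a \to 0$.

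Choosing $\delta = \eta a$ for a dimensional constant $\eta = \eta(n) > 0$ small enough that this quantity is at most $1/2$, the Neumann series $d := \sum_{k \geq 0} (-A)^k c$ converges in $\ell^\infty(\Lambda)$ with $\|d\|_\infty \leq 2\|c\|_\infty$, and $f := \sum_{\lambda} d_\lambda\, \varphi(\cdot - \lambda)$ then lies in $\mathcal{B}(\Omega)$ and interpolates $c$ on $\Lambda$, where $\Omega$ is the closed ball of radius $C/a$ with $C := 1/\eta = C(n)$. This produces the desired interpolation and, as a byproduct, an individual interpolating function $f_{\lambda_0}$ of uniformly bounded sup-norm for each $\lambda_0 \in \Lambda$ (by applying the construction to $c = e_{\lambda_0}$).
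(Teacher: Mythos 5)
Your argument is correct, but note that the paper itself does not prove \thmref{AL.11}: it is quoted as a known theorem of Ingham ($n=1$) and Kahane ($n>1$) with a reference to \cite{ou3}, so there is no in-paper proof to compare against. What you supply is the standard ``soft'' perturbation proof: write the candidate interpolant as $\sum_\lambda d_\lambda\varphi(\cdot-\lambda)$ with $\varphi$ a dilated band-limited bump, observe that the evaluation map on $\Lambda$ is $I+A$ with $A$ small in $\ell^\infty\to\ell^\infty$ norm once the dilation parameter is a small dimensional multiple of $a$ (by the shell-counting bound $\#\{\lambda: ka\le|\lambda-\lambda'|<(k+1)a\}\le C_nk^{n-1}$ and the decay of $\varphi$), and invert by a Neumann series. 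This is a legitimate self-contained proof of the statement as written, since the theorem only asks for \emph{some} dimensional constant $C$; the classical Ingham--Kahane arguments are harder because they pursue sharp (in dimension one, essentially optimal) constants via carefully chosen kernels and $L^2$ duality, which your route does not attempt and does not need. Two cosmetic points you should fix: (i) the normalization $\psi(0)=1$ is incompatible with the literal bound $|\psi(x)|\le(1+|x|)^{-n-1}$ near $x=0$ (a continuous function equal to $1$ at the origin exceeds that majorant for small $|x|$), so carry a constant $C_\psi(1+|x|)^{-n-1}$; it only rescales the choice of $\eta(n)$. (ii) The series $\sum_\lambda d_\lambda\varphi(x-\lambda)$ converges uniformly on compact sets and in the space of temperate distributions (not uniformly on all of $\R^n$), which is what you need to conclude both the boundedness of $f$ and that $\supp\ft f$ lies in the closed ball of radius $1/\delta$; and a harmless modulation reduces the case of a ball centered at an arbitrary point to the one centered at the origin. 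Neither point affects the validity of the proof.
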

As a consequence we obtain:
\begin{corollary}
	\label{AC.12}
	There is a constant $C$ which depends on the dimension $n$ only, such that if a measure
	$\mu$ is supported on a uniformly discrete set $\Lambda\subset \R^n$,
	$d(\Lambda)\ge a> 0$, and if the distribution $\hat{\mu}$ vanishes on a ball of radius $C/a$, then $\mu=0$. 
\end{corollary}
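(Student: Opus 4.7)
The plan is to deduce the corollary by a duality/interpolation argument: build a bounded function which ``selects'' a single atom of $\mu$ by vanishing on the rest of $\Lambda$, localize it so that its Fourier transform sits in the ball where $\hat\mu$ vanishes, and then pair.

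Let $C_0$ be the Ingham--Kahane constant from \thmref{AL.11}, and fix $C:=2C_0$. Since the support of $\mu$ is unchanged when $\mu$ is multiplied by a character, we may assume, after modulation, that the ball on which $\hat\mu$ vanishes is centered at the origin, so $\hat\mu$ vanishes on $B_{C/a}$. Suppose toward a contradiction that there exists $\lambda_0\in\Lambda$ with $\mu(\lambda_0)\neq 0$. Since $d(\Lambda)\ge a$, \thmref{AL.11} provides a function $f\in\B(\overline{B_{C_0/a}})$ with $f(\lambda_0)=1$ and $f(\lambda)=0$ for every $\lambda\in\Lambda\setminus\{\lambda_0\}$; note $f$ is bounded on $\R^n$.

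Next I would regularize $f$ to make it Schwartz, while keeping its spectrum inside $B_{C/a}$. Choose a Schwartz function $\eta$ with $\eta(0)=1$ and $\hat\eta$ supported in $\overline{B_1}$, and set $\eta_N(x):=\eta(x/N)$. Then $\eta_N$ is Schwartz with $\hat\eta_N$ supported in $\overline{B_{1/N}}$, so the product $g_N:=f\,\eta_N$ is Schwartz, and its Fourier transform $\hat g_N=\hat f\ast\hat\eta_N$ is supported in $\overline{B_{C_0/a+1/N}}$. For $N$ large enough this support lies inside $B_{2C_0/a}=B_{C/a}$, where $\hat\mu$ vanishes. By the definition of the Fourier transform of a tempered distribution,
\[
\langle \mu, g_N\rangle \;=\; \langle \hat\mu, \hat g_N\rangle \;=\; 0.
\]

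On the other hand, since $\mu$ is a slowly increasing measure and $g_N$ is Schwartz, the pairing expands as an absolutely convergent sum
\[
\langle \mu, g_N\rangle \;=\; \sum_{\lambda\in\Lambda} \mu(\lambda)\, f(\lambda)\, \eta_N(\lambda) \;=\; \mu(\lambda_0)\,\eta(\lambda_0/N),
\]
where in the last step we used that $f$ vanishes on $\Lambda\setminus\{\lambda_0\}$ and $f(\lambda_0)=1$. For $N$ large enough $\eta(\lambda_0/N)$ is close to $\eta(0)=1$ and in particular nonzero, so we must have $\mu(\lambda_0)=0$, contradicting the choice of $\lambda_0$.

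The only real subtlety is that $\hat\mu$ is only a distribution (not a priori a measure), so that the argument must go through the distributional pairing $\langle\hat\mu,\hat g_N\rangle$ rather than an integral; this forces the passage from the Bernstein function $f$ to the Schwartz cutoff $f\eta_N$. The remaining points---the slack $1/N$ in the spectrum, and the absolute convergence of $\sum_\lambda \mu(\lambda) g_N(\lambda)$ from slow growth of $\mu$ and rapid decay of $g_N$---are routine.
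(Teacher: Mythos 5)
Your proposal is correct and follows essentially the same route as the paper: use the Ingham--Kahane interpolation theorem to produce a Bernstein-class function selecting a single atom of $\mu$, multiply by a Schwartz function with small spectrum so the product is a Schwartz function whose spectrum still lies in the ball where $\hat\mu$ vanishes, and conclude by pairing with $\hat\mu$. The only differences are cosmetic (a limiting cutoff $\eta(x/N)$ instead of a fixed $\psi$ with $\psi(\lambda)=1$, and a slightly different choice of the constant $C$).
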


\begin{proof}
Suppose that $\hat{\mu}$ vanishes on the ball $B_R$, where $R:=(C+1)/a$ and
$C$ is the constant from \thmref{AL.11}
 (we may assume that the ball is centered at the origin). 
Let $\Omega=\{x : |x| \leq C/a\}$. 
Given $\lambda\in\Lambda$, there is $f\in \mathcal{B}(\Omega)$  such that $f(\lambda)=1$ and $f$ vanishes on
	$\Lambda\setminus \{\lambda\}$. Define $\varphi(x):= f(x)\psi(x)$, where $\psi$ is a
	Schwartz function such that $\psi(\lambda)=1$ and $\spec(\psi)\subset B_{1/a}$.
	Then $\varphi$ is a Schwartz function satisfying
\[
\varphi(\lambda)=1, \quad
	\text{$\varphi(\lambda')=0$ for all $\lambda' \in \Lam \setminus \{\lam\}$},  \quad
\spec(\varphi)\subset B_R.
\]
Hence
\[ \mu(\lambda)=\int
	\overline{\varphi(x)}d\mu(x)=\dotprod{\hat{\mu}}{\overline{\hat{\varphi}}}=0.
\]
	This holds for any $\lambda\in\Lambda$, so we obtain $\mu=0$. 
\end{proof}


\section{Auxiliary measures $\nu_h$}
Let $\mu$ be a measure on $\R^n$ satisfying \eqref{RM.1} and \eqref{RM.2},
and assume that the support $\Lambda$ is a uniformly discrete set.
By Lemma \ref{AL.4}, the atoms of $\mu$ are bounded, so $\mu$ is a translation-bounded measure. 

 For each $h\in S-S$ we denote 
\begin{equation}
	S_h:= S\cap (S-h)= \left\{ s\in S\,:\, s+h\in S \right\}, 
	\label{AL.3.1}
\end{equation}
which is a non-empty subset of $S$, and we introduce a new measure 
\begin{equation}
	\nu_{h}:= \sum_{s\in S_h}\hat{\mu}(s)\,\overline{\hat{\mu}(s+h)}\,\delta_s.
	\label{AL.3.2}
\end{equation}
Notice that it is a non-zero, slowly increasing measure, whose support is the set $S_h$.
\begin{lem}
	\label{AL.6}
	The Fourier transform $\hat{\nu}_h$ of the measure $\nu_{h}$ is also a measure,
	which is translation-bounded and supported by the closure of the set
	$\Lambda-\Lambda$.
\end{lem}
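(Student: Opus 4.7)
The plan is to identify $\hat\nu_h$ with the convolution $\mu^{-}\ast\bigl(e^{2\pi i\langle h,\cdot\rangle}\bar\mu\bigr)$, which is what the convolution theorem formally predicts from the factorization $\nu_h=\hat\mu\cdot\sigma_h$, where $\sigma_h$ is the discrete measure whose atom at $s$ equals $\overline{\hat\mu(s+h)}$. By Lemma \ref{AL.4}, both $\mu^-$ and $e^{2\pi i\langle h,\cdot\rangle}\bar\mu$ are translation-bounded measures supported on $\pm\Lambda$, so the convolution---if well-defined as a measure---is automatically supported in $\Lambda-\Lambda$. The difficulty is that a naive atom-by-atom computation of this convolution diverges: for $\mu=\sum_k\delta_k$ on $\R$ we have $\hat\nu_0=\sum_k\delta_k$ with unit atoms, yet $\sum_{\lambda-\lambda'=t}\mu(\lambda')\overline{\mu(\lambda)}=\sum_\lambda 1$ at each integer $t$. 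The convolution must therefore be constructed through a limiting procedure.

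I would fix a real Schwartz function $\chi$ on $\R^n$ with $\chi(0)=1$ and $\hat\chi\in C_c^\infty$, set $\chi_R(x):=\chi(x/R)$, and approximate $\mu$ by the finite measure $\mu\chi_R$. Then $\widehat{\mu\chi_R}=\hat\mu\ast\hat\chi_R$ is smooth, the product
$$\nu_h^R(t):=\widehat{\mu\chi_R}(t)\,\overline{\widehat{\mu\chi_R}(t+h)}$$
is smooth, and its Fourier transform
$$\widehat{\nu_h^R}\;=\;(\mu\chi_R)^{-}\ast\bigl(e^{2\pi i\langle h,\cdot\rangle}\overline{\mu\chi_R}\bigr)$$
is a finite measure supported in $\Lambda-\Lambda$. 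Using Lemma \ref{AL.4} to bound $|\mu(\lambda)|$ by $M:=\sup_{\lambda\in\Lambda}|\mu(\lambda)|$, and using that the uniform discreteness of $\Lambda$ caps the number of $\lambda\in\lambda'+B_1$ by a constant depending only on $d(\Lambda)$, a direct estimate of the atoms of $\widehat{\nu_h^R}$ gives
$$|\widehat{\nu_h^R}|(t_0+B_1)\;\le\;C M^2\!\sum_{\lambda'\in\Lambda}|\chi_R(\lambda')|^2\;=\;O(R^n),$$
uniformly in $t_0$; the last equality is a Riemann-sum estimate for a Schwartz $\chi$ summed over a uniformly discrete set of bounded density. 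Setting $c_R:=R^n\|\hat\chi\|_{L^2}^2$, the normalized measures $\tau_R:=c_R^{-1}\widehat{\nu_h^R}$ are thus uniformly translation-bounded. A direct expansion of $\int\nu_h^R(t)\psi(t)\,dt$ for Schwartz $\psi$ as a double sum over $(s,s')\in S\times S$ with weights $\int\hat\chi_R(t-s)\hat\chi_R(t-s'+h)\psi(t)\,dt$ then shows that the diagonal $s=s'\in S_h$ contributes $c_R\int\psi\,d\nu_h+o(c_R)$, hence $\tau_R\to\hat\nu_h$ weakly as tempered distributions. Passing to the limit in the uniform translation bound identifies $\hat\nu_h$ as a translation-bounded measure, with support contained in $\overline{\Lambda-\Lambda}$ since each $\widehat{\nu_h^R}$ lives in $\Lambda-\Lambda$.

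The main obstacle will be controlling the off-diagonal terms $s\ne s'$ in this last double-sum calculation. Since $S$ is only assumed countable, atoms of $\hat\mu$ may cluster at arbitrarily small scale, so observing that $\hat\chi_R$ is supported in a ball of radius $O(1/R)$ does not by itself eliminate off-diagonal pairs. Their total contribution has to be shown to be $o(c_R)$ using the slow increase of the total variation of $\hat\mu$, possibly after refining the choice of $\chi$ so that $|\hat\chi|^2$ peaks sharply enough on the diagonal to dominate any mass clustering in $S$.
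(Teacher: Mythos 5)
Your strategy---realizing $\hat{\nu}_h$ as a limit of self-convolutions of truncations of $\mu$, each supported in $\Lambda-\Lambda$---is sound in outline, but as written the proof has a genuine gap, and it is exactly the one you flag at the end: the off-diagonal terms. Because you mollify \emph{both} factors with the same cutoff $\chi_R$, the measure $\widehat{\nu_h^R}$ has mass of order $R^n$ and must be renormalized by $c_R\sim R^n$; recovering $\nu_h$ then becomes a Wiener-lemma-type diagonal extraction, and the assertion that the pairs $s\ne s'$ contribute $o(c_R)$ is precisely the statement that needs proof and is not supplied. It is not automatic: since $S$ may have finite accumulation points, arbitrarily many off-diagonal pairs survive the support restriction $|s-s'+h|<2/R$ at every scale $R$. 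The step can in fact be completed---after division by $c_R$ each off-diagonal term is bounded by $\|\psi\|_\infty\,|\hat{\mu}(s)|\,|\hat{\mu}(s')|$ and vanishes for $R$ large, so a dominated convergence argument using the local finiteness and slow increase of $|\hat{\mu}|$ closes it---but that argument is absent, and ``refining the choice of $\chi$'' is not the missing ingredient. Until it is written, the identification of the weak limit of $\tau_R$ with $\hat{\nu}_h$, and hence the whole conclusion, is unproved.

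For comparison, the paper's proof avoids this difficulty entirely by mollifying only \emph{one} of the two factors: it sets $\nu_h^{(\delta)}(t):=\overline{(\hat{\mu}\ast\hat{\varphi}_\delta)(t+h)}\cdot\hat{\mu}(t)$ with $\varphi_\delta(x)=\delta^n\varphi(\delta x)$, so that $\hat{\nu}_h^{(\delta)}$ is the convolution of the \emph{finite} measure $\overline{\varphi_\delta\, e^{-2\pi i\dotprod{h}{\cdot}}\mu}$ with the translation-bounded measure $\mu(-x)$. This immediately gives $|\hat{\nu}_h^{(\delta)}|(x+B_1)\le\{\sup_x|\mu|(x+B_1)\}\int|\varphi_\delta|\,|d\mu|\le C(\mu,\varphi)$ uniformly in $\delta$ (the last integral is a Riemann-sum bound over the uniformly discrete set $\Lambda$), with no renormalization and no off-diagonal double sum; the convergence $\nu_h^{(\delta)}\to\nu_h$ reduces to the single-sum fact that $(\hat{\mu}\ast\hat{\varphi}_\delta)(s+h)\to\hat{\mu}(\{s+h\})$. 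I would recommend replacing the symmetric cutoff by this one-sided mollification, which is what makes the uniform translation bound come for free.
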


This is an elaborated version of \cite[Lemma 12]{lo2}. That lemma stated that $\spec(\nu_h)$
does not intersect the punctured ball $B_a\setminus\left\{ 0 \right\}$, where $a:=d(\Lambda)>0$.
However, the result there was formulated with the roles of
	$\Lambda$ and $S$ interchanged, and under the stronger assumption that $\Lambda$,
	$S$ are both uniformly discrete sets. 

	\begin{proof}[Proof of Lemma \ref{AL.6}] 
		Fix a Schwartz function $\varphi$, whose Fourier transform $\hat{\varphi}$ has
		compact support and $\hat{\varphi}(0)=1$. For each $0<\delta<1$ we denote
		$\varphi_\delta(x):= \delta^n\varphi(\delta x)$, and define the measure
		\begin{equation}
			\nu_h^{(\delta)} (t) := \overline{\left( \hat{\mu} \ast \hat{\varphi}_\delta
			\right)(t+h)}\cdot
			\hat{\mu}(t).
			\label{AL.6.1}
		\end{equation}
It is a slowly increasing measure, supported by $S$, which tends to $\nu_h$ in the space of temperate
distributions as $\delta\to 0$. Hence $\hat{\nu}_h^{(\delta)}$ tends to $\hat{\nu}_h$ in the
same sense as $\delta\to 0$. 
\par
We will show that $\hat{\nu}_h^{(\delta)}$ is a
translation-bounded measure, and moreover 
\[ \sup_{x\in\R^n} |\hat{\nu}_{h}^{(\delta)}| (x+B_1) \] 
is bounded by some constant $C(\mu,\varphi)$ which depends on $\mu$ and $\varphi$ only
(in particular, it does not depend on $\delta$).
 Indeed, the Fourier transform of $\nu_h^{(\delta)}$ is the
measure 
\begin{equation}
	\hat{\nu}_h^{(\delta)}= \overline{(\varphi_\delta\cdot e_{-h}\cdot \mu)(x)} \ast
	\mu(-x),
	\label{AL.6.2}
\end{equation}
where $e_{-h}(x):= e^{-2\pi i \dotprod{h}{x}}$. Hence, we have
\[ \sup_{x\in \R^n} |\hat{\nu}_h^{(\delta)}| (x+B_1) \le 
\Big\{ \sup_{x\in \R^n} |\mu| (x+B_1) \Big\} \Big\{
\int |\varphi_\delta (x)| \, |d\mu(x)| \Big\} \le C(\mu,\varphi), \]
since $\mu$ is  translation-bounded. Letting $\delta\to 0$ this implies
that the limit $\hat{\nu}_h$ is also a translation-bounded measure,
and in fact $|\hat{\nu}_h| (x+B_1) \le C(\mu,\varphi)$ for all $x\in \R^n$.
\par
Finally, it follows from \eqref{AL.6.2} that the measure $\hat{\nu}_h^{(\delta)}$ is supported by
 $\Lambda-\Lambda$. Hence its limit as $\delta\to 0$ must be supported by the closure
 of $\Lambda-\Lambda$, which ends the proof. 
	\end{proof}


\section{Positive-definite measures}
\label{sec:pos-def}
\subsection{ } \label{5.1a}
In this section we consider positive-definite measures whose supports are
uniformly discrete sets. We establish a dichotomy concerning the
discreteness of the spectrum: either it is also  uniformly discrete, or it is
``non-discrete'' in a strong sense. 
\begin{thm}
	\label{AT.5}
	Let $\mu$ be a positive-definite measure on $\R^n$ satisfying \eqref{RM.1} and
	\eqref{RM.2}, and assume that the support $\Lambda$ is a uniformly discrete set. Then, either 
\begin{enumerate-math}
\item
$S$ is also  uniformly discrete; or 
\item
 $S$ has a relatively dense  set of accumulation points.
\end{enumerate-math}
\end{thm}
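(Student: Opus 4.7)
The plan is to argue by contradiction, assuming simultaneously that $S$ is not uniformly discrete and that the set of accumulation points of $S$ is not relatively dense, and to derive a contradiction from the auxiliary measures $\nu_h$ of Section~4. The failure of uniform discreteness yields a sequence $h_k\in(S-S)\setminus\{0\}$ with $h_k\to 0$, while the failure of relative density of the accumulation set produces centers $x_k$ and radii $R_k\to\infty$ such that $x_k+B_{R_k}$ contains no accumulation point of $S$. Inside each such ball, $S$ consists of finitely many isolated points with some minimum separation $\delta_k>0$. Passing to a subsequence with $|h_k|<\delta_k$ forces $\nu_{h_k}$ to vanish on $x_k+\overline{B_{R_k-1}}$, since no $s\in S$ in that ball can have $s+h_k\in S$. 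By positive-definiteness, $\hat\mu$ is a positive measure, so every atom $\hat\mu(s)\overline{\hat\mu(s+h_k)}$ of $\nu_{h_k}$ is strictly positive and $\nu_{h_k}\neq 0$.

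I then modulate to bring the balls to the origin. The measure $\mu_k:=e_{-x_k}\cdot\mu$ (with $e_a(x):=e^{2\pi i\langle a,x\rangle}$) is still positive-definite, since $\hat\mu(\cdot+x_k)$ is a translate of the positive measure $\hat\mu$; it has the same uniformly discrete support $\Lambda$, spectrum $S-x_k$, and the associated auxiliary measure is $\tilde\nu_k:=\tau_{-x_k}\nu_{h_k}$, which vanishes on $\overline{B_{R_k-1}}$. By \lemref{AL.6} applied to $\mu_k$, $\hat{\tilde\nu}_k$ is a translation-bounded measure supported on $\overline{\Lambda-\Lambda}$ with a uniform bound $\sup_x|\hat{\tilde\nu}_k|(x+B_1)\le C$ depending only on $|\mu|$, which modulation leaves invariant. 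Banach--Alaoglu then produces a weak-$\ast$ limit $\hat{\tilde\nu}_k\to\xi$, and the vanishing of $\tilde\nu_k$ on balls of growing radius forces $\xi=0$: for any Schwartz $\phi$, the pairing $\langle\hat{\tilde\nu}_k,\phi\rangle=\int\hat\phi\,d\tilde\nu_k$ is an integral of a rapidly decaying Schwartz function over $\R^n\setminus B_{R_k-1}$ against a measure of uniform polynomial growth, and so tends to $0$.

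The crucial observation is that $0$ is isolated in $\overline{\Lambda-\Lambda}$: since $d(\Lambda)\ge a>0$, we have $\overline{\Lambda-\Lambda}\cap B_a=\{0\}$. For a cutoff $\chi\in C_c(B_a)$ with $\chi(0)=1$, the pairing $\langle\hat{\tilde\nu}_k,\chi\rangle$ collapses to $\hat{\tilde\nu}_k(\{0\})$, and since modulation leaves the atom at the origin unchanged, $\hat{\tilde\nu}_k(\{0\})=\hat\nu_{h_k}(\{0\})$. Weak convergence to $\xi=0$ therefore gives $\hat\nu_{h_k}(\{0\})\to 0$. The contradiction will come from a positive lower bound $|\hat\nu_{h_k}(\{0\})|\ge c>0$. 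Using the regularization in the proof of \lemref{AL.6}, $\hat\nu_h(\{0\})$ is identified with a Bohr-type coefficient of $|\mu|^2$ at frequency $h$; at $h=0$ this is the positive Bohr mean of $|\mu|^2$, nonzero because $\mu\neq 0$, and positivity of $\hat\mu$ writes each coefficient as a sum $\sum_{s-s'=h}\hat\mu(s)\hat\mu(s')$ of strictly positive terms.

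The main obstacle is establishing this lower bound along our particular sequence: a priori, the Bohr coefficients of $|\mu|^2$ at frequencies $h_k\to 0$ may decay to zero, since $\hat\mu$ need not be bounded below on $S$. Overcoming this requires a refined selection of the close pairs $(s_k,s'_k)\in S\times S$ with $s_k-s'_k=h_k$ for which $\hat\mu(s_k)\hat\mu(s'_k)$ stays uniformly bounded below; such a selection rests on the positivity of $\hat\mu$ together with a pigeonhole argument exploiting the uniform discreteness of $\Lambda$ and the uniform translation-bound $C$ from \lemref{AL.6}. This refined choice of close pairs, rather than the weak-$\ast$ convergence itself, is the key technical step that closes the contradiction and yields the dichotomy.
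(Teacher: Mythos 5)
Your setup is sound and shares the paper's key objects: the measures $\nu_{h}$, their positivity (from positive-definiteness of $\mu$), the fact that $\hat{\nu}_h$ is supported by $\overline{\Lambda-\Lambda}$, and the isolation of $0$ in that set. The correct negation of the dichotomy and the observation that $\nu_{h_k}$ must vanish on large balls are also right. But the proof does not close: everything is funnelled into the claim that $\hat{\nu}_{h_k}(\{0\})$ stays bounded away from zero along your sequence $h_k\to 0$, and this is exactly the step you leave as ``requires a refined selection of close pairs \dots a pigeonhole argument''. That is the whole difficulty, and it is not a routine technicality. The quantity $\hat{\nu}_h(\{0\})$ is the mean (density at infinity) of the positive measure $\nu_h$, equivalently a Fourier--Bohr coefficient of $|\mu|^2$ at frequency $h$; such coefficients are \emph{not} continuous in $h$ (for $\mu=\sum_{n\in\Z}\delta_n$ the coefficient is $1$ on $\Z$ and $0$ elsewhere), so $h_k\to 0$ gives you nothing, and a non-zero positive measure can perfectly well have zero mean, so strict positivity of the individual atoms $\hat\mu(s)\overline{\hat\mu(s+h_k)}$ does not bound the atom of $\hat{\nu}_{h_k}$ at the origin from below. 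There is also a secondary gap in the weak-$\ast$ step: to get $\langle\hat{\tilde\nu}_k,\phi\rangle=\int\hat\phi\,d\tilde\nu_k\to 0$ you need polynomial growth of $|\nu_{h_k}|$ \emph{uniformly around the moving centers $x_k$}, and since $|x_k|$ may tend to infinity and the atoms of $\hat\mu$ are not assumed bounded, ``uniform polynomial growth'' of $\tilde\nu_k$ is not justified.

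The paper avoids this entire endgame with a different device. Instead of extracting the atom of $\hat{\nu}_h$ at the origin and trying to bound it below, it tests $\nu_h$ against a single fixed function: Lemma \ref{AL.1} produces a real Schwartz $\varphi$ with $\varphi(x)>0$ for $|x|\ge R$, $\int\varphi=0$, and $\spec(\varphi)\subset B_1$. If the conclusion fails one finds $h$ with $\nu_h$ a non-zero positive measure supported in $\{|t|\ge R/a\}$, whence $\int\varphi(ax)\,d\nu_h(x)>0$; but by Parseval this integral equals $a^{-n}\int\hat\varphi(-t/a)\,d\hat\nu_h(t)$, and $\hat\varphi(-t/a)$ vanishes identically on $\{0\}\cup\{|t|\ge a\}\supset\overline{\Lambda-\Lambda}$ precisely because $\int\varphi=0$ kills the origin and $\spec(\varphi)\subset B_1$ kills the rest. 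The condition $\int\varphi=0$ is what disposes of the atom at $0$ without ever estimating it, and the strict positivity of $\varphi$ outside $B_R$ replaces your missing lower bound. Unless you can actually produce the ``refined selection of close pairs'' (and repair the uniformity issue above), you should switch to an argument of this kind; as a bonus it yields the sharper quantitative statement that every ball of radius $C/d(\Lambda)$ meets $S$ in infinitely many points.
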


In particular, it follows that if the spectrum $S$ is a discrete closed set, then it must
be uniformly discrete. Hence \thmref{RT.0} applies to this situation, and yields that the
measure $\mu$ is representable in the form \eqref{RM.3}. So \thmref{RT.5} follows.

\begin{remark}
Actually we will prove that there is a constant
	$C$ which depends on the dimension $n$ only, such that if the spectrum
$S$ is not uniformly discrete, then every ball of radius
	$C/d(\Lam)$ contains infinitely many points of $S$.
\end{remark}

\subsection{}
We will need the following auxiliary lemma:
\begin{lem}
	\label{AL.1}
	There is a real-valued Schwartz function $\varphi$ on $\R^n$ which has  the following
	properties: 
	\begin{enumerate-math}
	\item \label{it_al.1.1} There is $R$ such that $\varphi(x)>0$ for $|x|\ge R$;
	\item \label{it_al.1.2} $\int \varphi(x)dx=0$;
	\item \label{it_al.1.3} $ \spec(\varphi)$ is contained in $B_1$ (the open unit ball).
	\end{enumerate-math}
\end{lem}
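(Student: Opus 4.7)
My plan is to construct $\varphi$ in the form $\varphi := Cg - g^2$ for an appropriately chosen real-valued Schwartz function $g > 0$ with small Fourier support; the constant $C$ will be determined by $C := \int g^2 / \int g > 0$, forcing $\int \varphi = 0$. Given such a $g$, the function $\varphi$ is automatically real-valued and Schwartz. The factorization $\varphi(x) = g(x)\bigl(C - g(x)\bigr)$, combined with $g(x) \to 0$ as $|x| \to \infty$ (since $g \in \mathcal{S}$) and with $g > 0$ everywhere, will yield $\varphi(x) > 0$ for all $|x|$ sufficiently large, giving (i). Condition (iii) will follow from $\widehat{g^2} = \hat g \ast \hat g$: if I arrange $\spec(g) \subset \overline{B_{1/3}}$, then $\spec(\varphi) \subset \spec(g) \cup \spec(g^2) \subset \overline{B_{2/3}} \subset B_1$.

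To build $g$, I would take $\rho \in C_c^\infty(B_{1/6})$ real, even, non-negative, and not identically zero, and set $\eta := \check\rho$. Then $\eta$ is a real Schwartz function with $\hat\eta = \rho$ compactly supported in $\overline{B_{1/6}}$, and $\eta^2$ has Fourier transform $\rho \ast \rho$ supported in $\overline{B_{1/3}}$. The difficulty is that $\eta$, being a non-trivial entire function of exponential type in $L^2(\R^n)$, cannot be zero-free, and in fact its zero set $Z_\eta \subset \R^n$ is an unbounded real-analytic variety of codimension at least $1$; consequently $\eta^2$ vanishes on an unbounded set. I would remedy this by defining
\[
g(x) := \sum_{j=0}^{n} \bigl|\eta(x - a_j)\bigr|^2
\]
for $n+1$ translates $a_0 = 0, a_1, \ldots, a_n \in \R^n$ in general position. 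A standard transversality/dimension count shows that for generic shifts $\bigcap_{j=0}^n (Z_\eta + a_j) = \emptyset$ (as $n+1$ transverse codimension-$1$ varieties in $\R^n$ have empty intersection), whence $g > 0$ everywhere on $\R^n$. Moreover each summand has the same Fourier support as $\eta^2$ (translation only multiplies $\widehat{\eta^2}$ by a phase), so $\spec(g) \subset \overline{B_{1/3}}$.

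With this choice of $g$, setting $C := \int g^2 / \int g$ and $\varphi := Cg - g^2$ completes the construction. Condition (ii) is immediate: $\int \varphi = C \int g - \int g^2 = 0$. Condition (iii) follows since $\widehat{g^2} = \hat g \ast \hat g$ is supported in $\overline{B_{1/3}} + \overline{B_{1/3}} = \overline{B_{2/3}} \subset B_1$. For (i), choose $R$ so large that $g(x) < C/2$ for $|x| \geq R$ (possible because $g \in \mathcal{S}$); then for such $x$ both factors in $\varphi(x) = g(x)(C - g(x))$ are strictly positive, so $\varphi(x) > 0$.

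The main obstacle is ensuring $g > 0$ on all of $\R^n$: in dimension $n = 1$ a single translate (two terms) already suffices because zero sets of non-trivial bandlimited Schwartz functions on $\R$ are discrete, but in dimension $n \geq 2$ those zero sets are typically codimension-$1$ varieties, and the transversality argument is needed to guarantee that $n+1$ translates of $|\eta|^2$ in general position have no common zero.
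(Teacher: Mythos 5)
Your construction is essentially the paper's own: the authors likewise set $\varphi=\alpha\psi-\beta\psi^2$ for a strictly positive Schwartz function $\psi$ with $\spec(\psi)\subset\{|t|\le 1/3\}$, with $\alpha=(\int\psi)^{-1}$ and $\beta=(\int\psi^2)^{-1}$ chosen to kill the integral, so your $Cg-g^2$ is the same function up to a positive constant factor and all three properties are verified in the same way. The only difference is that the paper simply asserts the existence of such a strictly positive band-limited $\psi$ without proof, whereas you supply an explicit construction (sums of translates of $|\eta|^2$ with a genericity argument to remove common zeros); that added step is correct and is in fact the only point of the lemma requiring any real argument.
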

\begin{proof}
	Choose a Schwartz function $\psi > 0$ whose Fourier transform $\hat{\psi}$ is
	supported in the ball $\left\{t:  |t|\le 1/3 \right\}$. Define $\varphi:=\alpha \psi
	-\beta \psi^2$, where $\alpha:= \left( \int \psi \right)^{-1}$ and $\beta:=\left(
	\int \psi^2 \right)^{-1}$. It is easy to verify that all the properties
	\ref{it_al.1.1}, \ref{it_al.1.2} and \ref{it_al.1.3} are satisfied.
\end{proof} 
\subsection{}
\begin{proof}[Proof of \thmref{AT.5}]
	Assume that $S$ is not uniformly discrete.  Let $\varphi$ be the function given by
	\lemref{AL.1}, and $R$ be the number from property \ref{it_al.1.1} of that lemma.
	We will show that 
	any closed ball of radius $C/a$ contains infinitely many points of $S$, where
$C : = R+1$ and $a:=d(\Lambda)>0$ (notice that the constant $C$ indeed depends on the dimension $n$ only). 
In particular this will show that the set of accumulation points of $S$ must be relatively dense.

\par
	By multiplying $\mu$ by an exponential (which corresponds to translation of
	$\hat{\mu}$), it will be enough to show that the ball $\{|t| \leq C/a\}$
contains  infinitely many points of $S$. So, suppose to the
	contrary that this does not hold, namely this ball
	contains only finitely many points of $S$. 
\par
 Since $S$ is not uniformly discrete,
	the set $S-S$ contains elements $h\neq 0$ arbitrarily close to zero. Hence we may choose
	$h\in S-S$ such that the set $S_{h}$ defined by \eqref{AL.3.1} does not intersect
	the ball $\left\{ |t| < R/a \right\}$. 	It follows that the measure $\nu_h$
	in \eqref{AL.3.2} is a non-zero positive measure, whose support is
	contained in $\left\{ |t|\ge R/a \right\}$. Using property \ref{it_al.1.1} from
	\lemref{AL.1} this implies that 
	\begin{equation}
		\label{AT.5.1}
		\int \varphi(ax)\,d\nu_h(x) > 0.
	\end{equation}
	On the other hand, we have 
	\begin{equation}
		\label{AT.5.2}
		\int
		\varphi(ax)\,d\nu_h(x)=a^{-n}\int \hat{\varphi}(-t/a) \, d\hat{\nu}_h(t).
	\end{equation}
	By \lemref{AL.6}, $\hat{\nu}_h$ is a measure, supported by the closure of
$\Lam-\Lam$. Since $\Lam$ is uniformly discrete, 
 this closure is contained in the set $\{0\}\cup
	\{|t|\ge a\}$.
 But from  properties \ref{it_al.1.2} and \ref{it_al.1.3} in
	\lemref{AL.1} it follows that the function $\hat{\varphi}(-t/a)$ vanishes on this set.
	Hence, the right-hand side of \eqref{AT.5.2} must vanish, 
	in contradiction with \eqref{AT.5.1}.  
\end{proof}


\section{Spectra with finite density}
In this section we prove \thmref{RT.6}. We will show that under the conditions in the
theorem, the spectrum $S$ of the measure $\mu$ must be a uniformly discrete set. Then the final conclusion that
$\mu$ is of the form \eqref{RM.3} can be deduced from \thmref{RT.0}. 
\subsection{}
For a set $\Lambda\subset \R $ we denote 
\[ \rho(\Lambda):= \sup_{x\in\R}\# (\Lambda\cap [x,x+1]). \] 
Notice that $\rho(\Lambda)<\infty$ if and only if $\Lambda$ is a finite union of uniformly
discrete sets. 
\par
We will need the following notion of ``lower density'' of a set $\Lambda \subset \R$, defined by 
\[ D_{\#}(\Lambda):=\liminf_{R\to\infty}\frac{\#(\Lambda\cap (-R,R))}{2R}. \]
Clearly we have $D_{\#}(\Lambda)\le\rho(\Lambda)$. It will be useful below to extend the
definition of the density $D_{\#}$ also to multi-sets $\Lambda \subset \R$, that is, to the
case when points in $\Lambda$ occur with multiplicities. Notice that $D_{\#}$ is
super-additive in the sense that 
\[ D_{\#}(A\cup B)\ge D_{\#}(A)+D_{\#}(B), \]
where the union $A\cup B$ is understood in the sense of multi-sets.
\subsection{ }
The following result is a more general version of \cite[Proposition 4]{lo2}.
\begin{prop}
	\label{BP.1}
	Let $\Lambda\subset \R$ be a set with $\rho(\Lambda)\le M< \infty$. Assume that
	$\Lambda$ supports a non-zero, slowly increasing  measure $\mu$, such that the distribution
	$\hat{\mu}$ vanishes on the open interval $(0,a)$ for some $a> 0$. Then 
	\[ D_{\#}(\Lambda)\ge c(a,M), \]
	where $c(a,M)>0$ is a constant which depends on $a$ and $M$ only. 
\end{prop}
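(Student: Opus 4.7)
The plan is a contradiction-and-compactness argument, reducing the density bound to a $\rho$-bounded extension of \corref{AC.12}. First I would normalize, by replacing $\mu(x)$ with $e^{-i\pi a x}\mu(x)$; this preserves $\Lam$, $\rho(\Lam)\le M$, and $D_\#(\Lam)$, and shifts the spectral gap of $\ft\mu$ to the symmetric interval $(-a/2, a/2)$. The key consequence is that for any Schwartz $\psi$ with $\spec(\psi)\subset(-a/2, a/2)$, the product $\ft\mu\cdot\ft\psi$ vanishes as a distribution, so
\[
(\mu\ast\psi)(x) = \sum_{\lam\in\Lam}\mu(\lam)\,\psi(x-\lam) \equiv 0, \qquad x \in \R.
\]
In parallel, the bound $\rho(\Lam)\le M$ admits a partition $\Lam=\Lam_1\sqcup\cdots\sqcup\Lam_M$ into $1$-separated subsets, by enumerating $\Lam$ in increasing order and coloring by residue modulo $M$: if $\lam_{k+M}-\lam_k < 1$, then $M+1$ consecutive points lie in a unit interval, contradicting $\rho(\Lam)\le M$.

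Next I would argue by contradiction: assume $D_\#(\Lam) < \eps$ for some small $\eps = \eps(a, M)$ to be chosen. Then there are arbitrarily large $R$ with $\#(\Lam \cap (-R, R)) < 2\eps R$. A pigeonhole argument within $(-R, R)$ produces sub-intervals $J_R = [x_R - T_R, x_R + T_R]$ of length $T_R \to \infty$ whose local density is $O(\eps)$ and which contain at least one point of $\Lam$. Letting $\lam_R^* \in \Lam \cap J_R$ attain the maximum $W_R := \max\{|\mu(\lam)| : \lam \in \Lam \cap J_R\}$ and forming the translated, rescaled measures $\mu_R := W_R^{-1}\mu(\cdot + \lam_R^*)$, the atoms of $\mu_R$ are bounded by $1$ on $J_R - \lam_R^*$, and combined with $\rho(\Lam) \le M$ this makes $\mu_R$ uniformly translation-bounded on every compact set for $R$ large. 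Weak compactness then produces a nonzero subsequential limit $\mu_\infty$ (an atom of unit modulus survives at the origin), which inherits the convolution identity $\mu_\infty \ast \psi = 0$, the complexity bound $\rho(\Lam_\infty) \le M$, and arbitrarily small local density on bounded intervals around the origin.

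The final step contradicts the existence of such $\mu_\infty$ via a $\rho$-version of \corref{AC.12}: a nonzero slowly-increasing measure supported on a set with $\rho \le M$, whose Fourier transform vanishes on an interval of length at least $C'(M)$, must vanish. Using the small local density of $\Lam_\infty$, a pigeonhole produces an empty sub-interval of length $>C'(M)$ near the origin whenever $\eps < 1/C'(M)$. Decomposing $\mu_\infty$ into its restrictions to the two half-lines on either side of this gap, Paley--Wiener provides analytic extensions of the respective Fourier transforms agreeing on the spectral gap $(-a/2, a/2)$ of $\ft{\mu_\infty}$; applying the extended corollary then forces $\mu_\infty = 0$, contradicting the unit atom at the origin. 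The quantitative constant $c(a,M) = \eps(a,M)$ emerges by balancing the empty-gap scale against $C'(M)$.

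The main obstacle I anticipate is establishing the $\rho$-version of \corref{AC.12}, since \thmref{AL.11} provides interpolation only on uniformly discrete sets. Using the partition $\Lam = \Lam_1 \sqcup \cdots \sqcup \Lam_M$, one would construct an interpolating Schwartz function $\varphi$ satisfying $\varphi(\lam^*) = 1$ at a prescribed $\lam^* \in \Lam$ and $\varphi \equiv 0$ on $\Lam \setminus \{\lam^*\}$ by multiplying $M$ factors from \thmref{AL.11}, one zeroing out each $\Lam_j$ separately while preserving the value at $\lam^*$. The resulting $\varphi$ has spectrum in a ball of radius proportional to $CM$ rather than $C$, and this $M$-fold enlargement of the spectral support is the source of the $M$-dependence in $c(a,M)$ and closes the argument.
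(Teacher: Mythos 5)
Your compactness reduction breaks down at the step where the density hypothesis is passed to the limit measure. The assumption $D_{\#}(\Lam)<\eps$ only gives intervals $J_R$ of length $2T_R\to\infty$ on which the \emph{average} density of $\Lam$ is $O(\eps)$; on a fixed bounded neighbourhood of your centring point $\lam_R^*$ the only available bound is $\rho(\Lam)\le M$, and that is what survives in the weak limit. Concretely, if $\Lam$ is a union of longer and longer finite blocks of maximal local density (about $M$ points per unit length) centred at rapidly increasing points, then $D_{\#}(\Lam)=0$, yet every translate centred inside a block converges to a set of density comparable to $M$: the limit set $\Lam_\infty$ has \emph{no} empty interval of length $>C'(M)$ near the origin, and the parameter $\eps$ has disappeared from the limiting object entirely. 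So the pigeonhole in your last step has nothing to act on. A second, independent problem is that even granting one support gap of length $C'(M)$, the concluding move does not close: your $\rho$-version of \corref{AC.12} (which is itself correct, via the $M$-fold product of Ingham interpolants, at the price of a spectrum of radius $O(CM^2)$) requires $\ft{\mu_\infty}$ to vanish on an interval of length $C'(M)$, whereas it only vanishes on $(-a/2,a/2)$, and $a$ may be far smaller than $C'(M)$. One gap in the support together with one gap in the spectrum is not a contradiction --- this is precisely the regime where Beurling-type gap theorems demand a substantial \emph{system} of gaps --- and the Paley--Wiener/analytic-continuation sentence is not a proof. (There is also a smaller, fixable issue: if the maximizer $\lam_R^*$ sits near an endpoint of $J_R$, the rescaled measures $\mu_R$ need not be locally uniformly bounded on both sides of the origin, so weak compactness requires a more careful choice of centre.)

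For contrast, the paper works directly at each finite scale $R$ and never takes a limit: \lemref{BL.2} (proved as in \cite[Section 4.1]{lo2}) asserts that once $\#(\Lam\cap(-R,R))/(2R)$ drops below a threshold $c(a,M)$, one can build a Schwartz function $\varphi$ with $\spec(\varphi)\subset(0,a)$, $\varphi(0)=1$, $\varphi=0$ on $\Lam\cap(-R,R)\setminus(-1,1)$, and $|\varphi|\le 1$ outside $(-R,R)$; pairing $\mu$ (translated so that a non-zero atom sits near the origin) against $\varphi$ and using the spectral gap yields the contradiction, with the tail controlled by the decay of $\varphi$. The essential content there --- and the step missing from your argument --- is that \emph{low density of $\Lam$} allows the interpolant's spectrum to be squeezed into the prescribed interval $(0,a)$; your construction from \thmref{AL.11} only ever produces annihilators with spectrum of radius governed by $M$, with no mechanism for the density $\eps$ and the gap length $a$ to interact.
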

This can be deduced from the following lemma:
\begin{lem}
	\label{BL.2}
	Let $\Lambda$ be a finite subset of $(-R,R)\setminus (-1,1)$, such that
	$\rho(\Lambda)\le M$, and let $a>0$. There is $c(a,M)>0$ such that if
	$(\#\Lambda)/(2R)<c(a,M)$, then one can find a Schwartz function $\varphi$ with
	the following properties: 
	\[ \varphi(0)=1, \;\; \varphi(\lambda)=0 \;\; (\lambda\in \Lambda), \;\; \spec(\varphi)\subset
	(0,a), \;\; \sup_{|x|\ge R} |\varphi(x)|\le 1. \]
\end{lem}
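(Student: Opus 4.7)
My plan is to construct $\varphi$ as a product $\varphi=FG$, where $F$ is an entire function of small exponential type that vanishes on $\Lambda$ with $F(0)=1$, and $G$ is a Schwartz window with spectrum strictly inside $(0,a)$, $G(0)=1$, and rapid polynomial decay. The factor $F$ carries the interpolation conditions at $0$ and $\Lambda$, while $G$ supplies both the spectral confinement inside $(0,a)$ and the decay that will control $\varphi$ outside $[-R,R]$.

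To build $F$, I first exploit $\rho(\Lambda)\le M$: a greedy coloring, going through the points of $\Lambda$ in order of magnitude and assigning each to the smallest color not used within distance $1$, splits $\Lambda$ into $M$ uniformly discrete subsets $\Lambda_1,\dots,\Lambda_M$ with $d(\Lambda_j)\ge 1$. Since $|\lambda|\ge 1$ for every $\lambda\in\Lambda$, also $d(\Lambda_j\cup\{0\})\ge 1$. Applying \thmref{AL.11} (Ingham--Kahane) to each $\Lambda_j\cup\{0\}$ yields $f_j\in\B(\Omega)$, with $\Omega$ a closed ball of radius equal to the one-dimensional Ingham constant $C$, satisfying $f_j(0)=1$, $f_j|_{\Lambda_j}\equiv 0$, and (by the closed graph theorem) $\|f_j\|_\infty$ bounded by an absolute constant. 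Setting $F:=f_1\cdots f_M$, I get $F(0)=1$, $F|_\Lambda\equiv 0$, $\|F\|_\infty$ bounded in terms of $M$, and $\spec(F)$ contained in a ball of radius $MC$.

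For $G$, I take $\widehat G$ to be a smooth bump supported in $(MC,a-MC)$ with $\int\widehat G=1$; then $G(0)=1$, $\spec(F)+\spec(G)\subset(0,a)$, and repeated integration by parts gives $|G(x)|\le C_k|x|^{-k}$ for every $k$. Assembling $\varphi:=FG$, the first three conditions are automatic, while for $|x|\ge R$ one estimates $|\varphi(x)|\le\|F\|_\infty\cdot C_k R^{-k}$, which is at most $1$ once $k$ is chosen large enough.

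The main obstacle is the small-$a$ regime $a\le 2MC$, where the ball of radius $MC$ no longer fits inside $(0,a)$ and the product construction above breaks down. In that regime, $F$ must be constructed with $\spec(F)$ strictly inside a ball of radius less than $a/2$, which by \thmref{AL.11} requires input sets of separation exceeding $2C/a$. This is where the density hypothesis $\#\Lambda/(2R)<c(a,M)$ genuinely enters: $c(a,M)$ must be small enough (one expects roughly of order $a/M$) to allow a further thinning-and-interpolation scheme — for instance, replacing the raw color classes $\Lambda_j$ by Beurling-type interpolants at the critical density. Extracting such a $c(a,M)$ depending only on $a$ and $M$ (and not on $R$) is the crux of the argument.
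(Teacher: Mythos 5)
Your construction proves the lemma only in the regime $a>2MC$ (with $C$ the Ingham--Kahane constant of \thmref{AL.11}), and in that regime the density hypothesis $(\#\Lambda)/(2R)<c(a,M)$ plays no role at all: your $\varphi=FG$ exists for \emph{every} finite $\Lambda\subset(-R,R)\setminus(-1,1)$ with $\rho(\Lambda)\le M$, provided $R$ is large. (Even there two details need care: the uniform bound $\|f_j\|_\infty\le K$ over all colour classes of separation $1$ does not come from the closed graph theorem, which only gives a constant depending on the particular class, but from the quantitative form of Ingham's theorem; and the estimate $K^M C_k R^{-k}\le 1$ fails for $R$ close to $1$, a case that must be absorbed by taking $c(a,M)$ so small that $\Lambda$ is then forced to be empty.) The real content of the lemma is the case $a\le 2MC$, and this is exactly the case needed in the application: in the proof of \thmref{RT.6} the lemma is invoked with $a=d(\Lambda)$, the separation of the support of the original measure, and $M=\rho(S)$, neither of which is under our control. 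Your closing paragraph correctly identifies this case as ``the crux'' and leaves it open, so the proposal does not prove the statement.

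The obstruction is structural, not a missing computation. Splitting $\Lambda$ into classes of separation $s$ requires on the order of $sM$ classes, each contributing a spectral ball of radius $C/s$ by \thmref{AL.11}, so the spectrum of the product is always a ball of radius about $MC$ no matter how $s$ is chosen; likewise a dilation $x\mapsto tx$ rescales $a$ and $M$ in compensating ways, so the condition $a>2MC$ is scale-invariant and the small-$a$ case cannot be reduced to the large-$a$ one by your ``thinning'' idea. To get the vanishing factor's spectrum below radius $MC$ one must use the low density of $\Lambda$ inside $(-R,R)$ in the construction of that factor itself, and this is also where the simultaneous normalization $\varphi(0)=1$ and $\sup_{|x|\ge R}|\varphi(x)|\le 1$ becomes delicate: Beurling--Malliavin-type existence theorems, which you gesture at, give a non-zero function vanishing on a set of subcritical density but no quantitative control of its value at the origin against its size outside $(-R,R)$, uniformly in $R$. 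The paper omits the proof and refers precisely for this construction to \cite[Section 4.1]{lo2}; that is the part your argument does not reach.
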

The proof of \lemref{BL.2}, as well as the deduction of \propref{BP.1} from this lemma,
can be done in a way similar to \cite[Section 4.1]{lo2}, and we omit the details.
\subsection{} 
\begin{lem}
	\label{BL.3}
	Let $S\subset \R$ be a set with $\rho(S)<\infty$. Suppose that there is
	$c=c(S)>0$ such that $D_{\#}(S_h)>c$ for every $h\in S-S$. Then
	$\rho(S-S)<\infty$. 
\end{lem}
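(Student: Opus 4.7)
My plan is to argue by contradiction using a double-counting identity. Suppose $\rho(S-S)=\infty$, and write $M:=\rho(S)<\infty$. Then for every positive integer $N$ we can find an interval $I \subset \R$ of length $1$ containing $N$ distinct elements $h_1,\dots,h_N$ of $S-S$. The goal is to bound $N$ in terms of $M$ and $c=c(S)$, contradicting the arbitrariness of $N$.

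The identity I will exploit is that $s\in S_{h_i}$ if and only if $s\in S$ and $s+h_i\in S$. Fix such $s\in S$: the $N$ points $s+h_1,\dots,s+h_N$ are pairwise distinct and lie in the translated length-$1$ interval $s+I$, so at most $\rho(S)=M$ of them can belong to $S$. Summing over $s \in S \cap (-R,R)$ yields the upper bound
\[
\sum_{i=1}^{N} \#\bigl(S_{h_i}\cap(-R,R)\bigr)
\;=\; \sum_{s\in S\cap(-R,R)} \#\{\,i : s+h_i\in S\,\}
\;\leq\; M\cdot\#\bigl(S\cap(-R,R)\bigr)
\;\leq\; 3M^{2}R,
\]
valid for all $R\geq 1$, since $\rho(S)\leq M$ implies $\#(S\cap(-R,R))\leq M(2R+1)\leq 3MR$.

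For the matching lower bound I use the standing hypothesis $D_{\#}(S_{h_i})>c$ for each $i$. By definition of liminf, one can choose $R$ large enough (depending on the finite collection $h_1,\dots,h_N$) so that $\#(S_{h_i}\cap(-R,R))>2Rc$ holds simultaneously for all $i=1,\dots,N$. Summing gives
\[
\sum_{i=1}^{N} \#\bigl(S_{h_i}\cap(-R,R)\bigr) \;>\; 2RNc.
\]
Combining the two bounds yields $2RNc<3M^{2}R$, i.e.\ $N<3M^{2}/(2c)$. Since $N$ was arbitrary, this is the desired contradiction, and in fact the argument shows the quantitative estimate $\rho(S-S)\leq 3M^{2}/(2c)$.

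I do not foresee any real obstacle: the proof is a clean double count, and the only mild subtlety is that the threshold $R$ beyond which $\#(S_{h_i}\cap(-R,R))/(2R)>c$ holds may depend on $i$, so one must pick $R$ large enough uniformly in $i=1,\dots,N$ — which is fine because $N$ is fixed before $R$ is chosen. (This is also precisely the place where mere \emph{super-additivity} of $D_{\#}$ over multisets is implicitly being used.)
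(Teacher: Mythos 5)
Your proof is correct and is essentially the paper's argument: the key observation in both is that for fixed $s\in S$ the $N$ translates $s+h_1,\dots,s+h_N$ lie in a single interval of length $1$, so $s$ belongs to at most $\rho(S)$ of the sets $S_{h_i}$. The paper packages this as super-additivity of $D_{\#}$ over a multi-set union with multiplicities bounded by $\rho(S)$ (getting the slightly sharper bound $\rho(S)D_{\#}(S)/c$), while you unpack the same double count explicitly with counting functions on $(-R,R)$.
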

\begin{proof}
	Let $x\in\R$, and suppose that $h_1,\dots, h_N$ are distinct points in the set
	$(S-S)\cap [x,x+1]$. Since the lower density $D_{\#}$ is super-additive, we have 
	\[ cN \le \sum_{j=1}^{N}D_{\#}(S_{h_j})\le D_{\#}\Big(
	\bigcup_{j=1}^{N}S_{h_j} \Big), \]
where the union is understood in the sense of multi-sets. Notice that each point in this
union occurs with multiplicity not greater than $\rho(S)$. It follows that $cN\le
\rho(S)  D_{\#}(S)$, which shows that the set $S-S$ cannot have more than
$\rho(S)  D_{\#}(S)/c$ elements in any closed interval of length 1. Hence $\rho(S-S)<\infty$,
which proves the claim.
\end{proof}
\subsection{}
\begin{proof}[Proof of \thmref{RT.6}] 
We assume that $\mu$ is a measure on $\R$ satisfying \eqref{RM.1} and
\eqref{RM.2}, where $\Lambda$ is a uniformly discrete set, and $S$ is a set with
$\rho(S)<\infty$.
\par
 For each $h\in S-S$, let $\nu_{h}$ be the measure
defined by \eqref{AL.3.2}. By \lemref{AL.6} the Fourier transform $\hat{\nu}_h$ is
supported by the closure of the set $\Lambda-\Lambda$. Since $\Lambda$ is uniformly
discrete this implies that $\hat{\nu}_{h}$ vanishes on the open interval $(0,a)$, where
$a:=d(\Lambda)>0.$ As the measure $\nu_h$ is supported by $S_h$, it follows from \propref{BP.1} that 
$ D_{\#}(S_h)\ge c$, where $c>0$ is a constant which depends on $d(\Lambda)$ and $\rho(S)$. 
\par
Since this holds  for every $h\in S-S$, \lemref{BL.3} allows us to deduce that $\rho(S-S)<\infty$. In particular, the set
$S-S$ has no accumulation point at zero, so there is $\delta>0$ such that $(S-S)\cap
(-\delta,\delta)=\left\{ 0 \right\}$. Hence $S$ must be uniformly discrete, and
in fact $d(S)\ge \delta$. 
\par
Once we have concluded that 
$\Lambda$ and $S$ are both uniformly discrete sets, we can apply \thmref{RT.0} which yields that the
measure $\mu$ is representable in the form \eqref{RM.3}.
\end{proof}


\section{Meyer sets}
\subsection{}
In this section we show that if the support $\Lam$  satisfies a  stronger  
              discreteness condition than in \thmref{AT.5},  then the conclusion of this  theorem
              remains true without  any additional  positivity restriction.
\begin{definition*}
	A set $\Lambda\subset \R^n$ is called a \emph{Delone set} if $\Lambda$ is both a
	uniformly discrete and relatively dense set. 
\end{definition*}
\lemref{AL.10} implies that a uniformly discrete set $\Lam$ which supports a measure $\mu$,
whose Fourier transform $\ft\mu$ is  a pure point
measure, must be a Delone set.
\begin{definition*}
	A set $\Lambda\subset \R^n$ is called a \emph{Meyer set} if the following two conditions
	are satisfied: 
	\begin{enumerate-math}
	\item $\Lambda$ is a Delone set;
	\item There is a finite set $F$ such that $\Lambda-\Lambda\subset \Lambda+F$. 
	\end{enumerate-math}
\end{definition*}
The concept of Meyer set was introduced in \cite{mey0, mey1} in connection with problems
in harmonic analysis. After the experimental discovery of quasicrystalline materials in
the middle of 80's, Meyer sets have been extensively studied as mathematical models of
quasicrystals. 
\par
There are some equivalent forms of the definition of a Meyer set,
see  \cite{moo}. In particular, the following is true (Lagarias \cite{lag1}):
\par
\emph{A Delone set $\Lam$ is a Meyer set if and only if
$\Lambda-\Lambda$ is  uniformly discrete}
\par
\noindent
           (a simplified version of the proof of this equivalence can be found in \cite[Lemma 8]{lo2}).

\subsection{}
Now we show that if a measure $\mu$ is supported by a Meyer set $\Lam$, then the
dichotomy phenomenon for the spectrum $S$ is valid: either $S$ is uniformly discrete, or
it is non-discrete with a relatively dense set of accumulation points.  
\begin{thm}
	\label{CT.1}
	Let $\mu$ be a measure on $\R^n$ satisfying \eqref{RM.1} and \eqref{RM.2},
	and assume that the support $\Lam$ is a Meyer set. Then the same
	conclusion as in \thmref{AT.5} holds.
\end{thm}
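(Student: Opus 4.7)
The plan is to adapt the dichotomy argument used in the proof of \thmref{AT.5}. In that argument, positive-definiteness of $\mu$ entered only to ensure that the auxiliary measure $\nu_h$ is a non-zero \emph{positive} measure, after which the test-function trick from \lemref{AL.1} produced the contradiction. In the Meyer set setting I lose positivity, but by the Lagarias characterization quoted above, the difference set $\Lambda - \Lambda$ is itself uniformly discrete; this extra rigidity will substitute for positivity through a direct application of \corref{AC.12} to $\hat{\nu}_h$.

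Set $b := d(\Lambda - \Lambda) > 0$ and let $C$ be the constant from \corref{AC.12} in dimension $n$. Assuming that $S$ is not uniformly discrete, I plan to show that every open ball of radius $C/b + 2$ contains an accumulation point of $S$, which yields the required relative density of accumulation points. Suppose for contradiction that some ball $\{t : |t - s_0| < C/b + 2\}$ contains no accumulation point of $S$. Then $S \cap \{t : |t - s_0| \leq C/b + 1\}$ is a subset of a compact set with no accumulation points, hence finite; let $\delta > 0$ be its minimum interpoint distance. Since $S - S$ accumulates at $0$, I pick $h \in S - S$ with $0 < |h| < \min(\delta, 1)$. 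If some $s \in S_h$ satisfied $|s - s_0| < C/b$, then $s$ and $s + h$ would be two distinct points of $S \cap \{t : |t - s_0| \leq C/b + 1\}$ at distance $|h| < \delta$, which is impossible. Therefore $\nu_h$, which is supported on $S_h$, vanishes on the open ball $\{t : |t - s_0| < C/b\}$.

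Now I invoke \lemref{AL.6}, whose hypotheses do not require any positivity of $\mu$: the Fourier transform $\hat{\nu}_h$ is a translation-bounded measure supported by the closure of $\Lambda - \Lambda$, which coincides with $\Lambda - \Lambda$ itself by its uniform discreteness. Viewing $\hat{\nu}_h$ as a measure on the uniformly discrete set $\Lambda - \Lambda$ of separation $b$, and recalling that its Fourier transform $\nu_h$ vanishes on a ball of radius $C/b$, \corref{AC.12} forces $\hat{\nu}_h = 0$ and hence $\nu_h = 0$. This contradicts $\nu_h \neq 0$, which is guaranteed because $h \in S - S$ makes $S_h$ non-empty and the coefficients $\hat{\mu}(s)\overline{\hat{\mu}(s+h)}$ are non-zero for $s \in S_h$ by the definition of $S$.

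The main obstacle I expect is essentially quantitative: the radius of the ball on which $\nu_h$ is forced to vanish must match exactly the radius $C/b$ demanded by \corref{AC.12} for the set $\Lambda - \Lambda$, so the whole argument is set up around this single constant. The one-unit margin bringing the accumulation-point hypothesis up to radius $C/b + 2$ is precisely what accommodates the shift by $|h| < 1$ when passing from $s$ to $s + h$ and keeps both points inside the compact ball of radius $C/b + 1$. No translation of $\mu$ by an exponential is needed, since \corref{AC.12} sees only the radius, not the location, of the ball on which the Fourier transform vanishes.
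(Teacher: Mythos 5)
Your proposal is correct and follows essentially the same route as the paper: both use the uniform discreteness of $\Lambda-\Lambda$ (via Lagarias' characterization of Meyer sets) so that \lemref{AL.6} places $\hat{\nu}_h$ on a uniformly discrete set, and then apply \corref{AC.12} to conclude that the non-zero measure $\nu_h$ cannot vanish on any ball of radius $C/d(\Lambda-\Lambda)$, forcing $S_h$ to be relatively dense. The only difference is that you spell out explicitly, with the $+2$ margin, the reduction that the paper delegates to ``as in the proof of \thmref{AT.5}''; this is a presentational matter, not a mathematical one.
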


\begin{proof}
As in the proof of \thmref{AT.5}, it will
	be enough to prove that for every $h\in S-S$, the set $S_h$ must intersect any ball
	of radius $C/a$, but now we will take $C$ to be the number from \corref{AC.12}, and
	$a:=d(\Lambda-\Lambda)>0$.
\par
And indeed, by \lemref{AL.6} the measure $\nu_h$
	defined by \eqref{AL.3.2} is a non-zero measure, whose Fourier transform
	$\hat{\nu}_h$ is a translation-bounded measure supported by the set
	$\Lambda-\Lambda$ (this set is uniformly discrete, so it is not necessary to consider
its closure). Now \corref{AC.12}, applied to the measure
	$\hat{\nu}_h$,  implies that $\nu_h$ cannot vanish on a ball of radius
	$C/a$. Hence  $S_h$ must intersect any such a ball, which completes the proof.
\end{proof}

\begin{remark}
The proof in fact shows that there is a constant
	$C$ which depends on the dimension $n$ only, such that if the spectrum
$S$ is not uniformly discrete, then every ball of radius
	$C/d(\Lam-\Lam)$ contains infinitely many points of $S$.
\end{remark}

\subsection{}
Next, we deduce \thmref{RT.7} from the above result. Suppose that $\Lambda-\Lambda$ is a
uniformly discrete set, and that $S$ is  a discrete closed set. Hence $S$ 
has no finite accumulation points, so it follows from \thmref{CT.1} that $S$
  must be uniformly discrete. 
\par
However, to complete the conclusion of \thmref{RT.7} it still remains to show that  $\mu$ 
is representable in the form \eqref{RM.3}. Here one cannot directly
apply \thmref{RT.0}, since the measure was assumed to be neither positive nor positive-definite. 
In order to obtain \eqref{RM.3} we use instead the following version of
\thmref{RT.0}, proved in \cite{lo1}:
\begin{thm}
	\label{RT.9}
	Let $\mu$ be a measure on $\R^n$ satisfying \eqref{RM.1} and \eqref{RM.2}. 
	Assume that the sets $\Lambda-\Lambda$ and $S$ are both uniformly discrete. Then the
	conclusion of \thmref{RT.0} holds.
\end{thm}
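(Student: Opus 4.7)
The plan is to reduce the general (non-positive-definite) statement to the already-known positive case of Theorem~\ref{RT.0} via an auxiliary positive measure, and then to disentangle $\mu$ from its ``squared modulus'' by Fourier analysis on a lattice.

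First I would apply Theorem~\ref{RT.0} to $\nu_0 := \sum_{s \in S}|\hat\mu(s)|^2 \delta_s$, the $h=0$ instance of the auxiliary measures of Lemma~\ref{AL.6}. Applying Lemma~\ref{AL.4} to $\hat\mu$ gives $\sup_{s \in S}|\hat\mu(s)| < \infty$, so $\nu_0$ is a translation-bounded positive measure with uniformly discrete support $S$; by Lemma~\ref{AL.6} its Fourier transform $\hat\nu_0$ is translation-bounded and supported on the uniformly discrete set $\Lambda - \Lambda$. The positive case of Theorem~\ref{RT.0} then produces a lattice $L \subset \R^n$ such that $\nu_0$ has the form \eqref{RM.3} relative to $L$; in particular $S \subset \bigcup_{j=1}^{J}(L+\tau_j)$ for finitely many $\tau_j$ distinct modulo $L$.

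Next, decompose $\hat\mu = \sum_j \hat\mu_j$ with $\hat\mu_j := \hat\mu|_{L+\tau_j}$, and let $\mu_j := \mathcal{F}^{-1}\hat\mu_j$. A direct computation gives $\mu_j = e^{2\pi i\langle\cdot,\tau_j\rangle}\tilde G_j$ for an $L^*$-periodic distribution $\tilde G_j$, so $\mu_j$ satisfies the character relation $T_{l^*}\mu_j = e^{-2\pi i\langle l^*,\tau_j\rangle}\mu_j$ for every $l^* \in L^*$ (where $T_{l^*}$ denotes translation by $l^*$). Since the $\tau_j$ are distinct modulo $L$, these $J$ characters of $L^*$ are pairwise distinct, so by a Vandermonde argument one can choose $l^*_1, \dots, l^*_J \in L^*$ such that each $\mu_j$ is a finite linear combination of the translates $T_{l^*_k}\mu$. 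Hence $\mu_j$, and therefore $\tilde G_j$, is supported on $\bigcup_k (\Lambda + l^*_k)$, which has bounded density; combined with the $L^*$-periodicity, this forces $\tilde G_j$ to be a finite sum of constant-weight Dirac combs on cosets of $L^*$. Multiplying back by $e^{2\pi i\langle\cdot,\tau_j\rangle}$ expresses each $\mu_j$ as a finite sum of modulated Dirac combs on cosets of $L^*$; summing over $j$ and regrouping by coset yields trigonometric-polynomial coefficients on each coset, which is precisely the form \eqref{RM.3}.

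The main obstacle is the disentangling step: passing from support information about the sum $\mu$ to sharp support information about each individual piece $\mu_j$. A priori each $\mu_j$ could have much wider support than $\mu$ does, since cancellations in the sum $\sum_j \mu_j$ could compress its support dramatically. What saves the situation is the character-independence of the $\tau_j$ modulo $L$, which places each $\mu_j$ in the finite-dimensional linear span of a bounded number of translates of $\mu$ and thereby confers the necessary rigidity.
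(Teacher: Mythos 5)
Your argument is correct, but note that the paper itself contains no proof of this statement: \thmref{RT.9} is simply quoted from \cite{lo1}, so the real comparison is with the argument of \cite{lo1,lo2}, which proceeds quite differently --- there one shows that $\Lam$ and $S$ are Meyer sets and combines the measures $\nu_h$ with Meyer's structure theory, and in fact the positive case of \thmref{RT.0} is itself \emph{deduced} in \cite{lo2} from a statement of essentially this kind. Your route runs in the opposite direction, and each step checks out. The first step is sound: $\nu_0=\sum_{s\in S}|\ft\mu(s)|^2\delta_s$ is a non-zero, positive, translation-bounded measure with uniformly discrete support $S$ (\lemref{AL.4} applied to $\ft\mu$ gives boundedness of the atoms), and \lemref{AL.6} places $\ft\nu_0$ on the closed, uniformly discrete set $\Lam-\Lam$ --- this is exactly where the hypothesis on $\Lam-\Lam$ is used --- so \thmref{RT.0} applies to $\nu_0$ and yields $S\subset\bigcup_{j}(L+\tau_j)$. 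The disentangling step is also correct: since $\ft{\mu}_j$ is supported on $L+\tau_j$, one has $T_{l^*}\mu_j=e^{-2\pi i\langle l^*,\tau_j\rangle}\mu_j$ for $l^*\in L^*$ (writing $T_v$ for translation by $v$); linear independence of the distinct characters $l^*\mapsto e^{-2\pi i\langle l^*,\tau_j\rangle}$ of $L^*$ produces points $l^*_1,\dots,l^*_J$ with $(e^{-2\pi i\langle l^*_k,\tau_j\rangle})_{k,j}$ invertible; each $\mu_j$ is then a finite combination of the measures $T_{l^*_k}\mu$, hence a measure supported on the finite-density set $\bigcup_k(\Lam+l^*_k)$; and an $L^*$-periodic measure supported on a set of finite upper density can occupy only finitely many cosets of $L^*$, with constant weight on each, so regrouping gives \eqref{RM.3} with lattice $L^*$. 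Two remarks. First, your disentangling is a clean standalone lemma (any measure whose spectrum lies in finitely many cosets of a lattice and whose support has finite upper density has the form \eqref{RM.3}), and the only role of \thmref{RT.0} is to supply the coset containment for $S$ via the positive measure $\nu_0$. Second, since the published proof of \thmref{RT.0} factors through (a version of) \thmref{RT.9}, your derivation is a valid implication given the paper's black-box use of \thmref{RT.0}, but it should not be mistaken for an independent proof of the underlying structure theorem.
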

 Combining Theorems \ref{CT.1} and \ref{RT.9} thus implies the full assertion of \thmref{RT.7}.


\section{Nowhere dense spectra}

\label{subsec:notdense}
In this section we prove \thmref{RT.15}. We will construct a non-periodic
measure $\mu$ supported on  a Meyer set $\Lam\subset \R^n$,
such that the spectrum $S$ is not dense in any ball. 
Moreover, the measure $\mu$ in the construction 
 is positive-definite, and both $\mu$ and $\ft\mu$
are translation-bounded measures.

\subsection{}
 Let $\Gamma$ be a lattice in $\R^n\times \R^m$, and let
	$p_1$ and $p_2$ denote the projections onto $\R^n$ and $\R^m$ respectively. Assume
	that the restrictions of $p_1$ and $p_2$ to $\Gamma$ are injective, and that their
	images are dense in $\R^n$ and $\R^m$ respectively.
	Let $\Gamma^{*}$ be the dual
	lattice, then the restrictions of $p_1$ and $p_2$ to $\Gamma^{*}$ are also
	injective and have dense images.
\par
 If $\Omega$ is a bounded open set in
	$\R^m$, then the set 
	\begin{equation}
\Lambda(\Gamma,\Omega):= \left\{ p_1(\gamma)\,:\,
	\gamma\in \Gamma, \, p_2(\gamma)\in \Omega \right\} 
		\label{RP.9.0}
	\end{equation}
is called the ``model set'',
	or the ``cut-and-project set'', associated to the lattice $\Gamma$ and to the
	``window'' $\Omega$. 
It is well-known that any model set is a Meyer set.
\par
Meyer observed \cite[p.\ 30]{mey0} (see also \cite{mey2}) that
	model sets provide examples of non-periodic uniformly discrete sets, which support
	a measure $\mu$ such that the Fourier transform $\ft\mu$ is also a pure point measure. Such a
	measure may be obtained by choosing a Schwartz function $\varphi$ on
$\R^m$ such that $\supp(\hat{\varphi})\subset \Omega$, and taking 
	\begin{equation}
		\mu = \sum_{\gamma\in\Gamma} \hat{\varphi} ( p_2(\gamma)  )
		\delta_{p_1(\gamma)}.
		\label{RP.9.1}
	\end{equation}
	It is not difficult to verify that this is a translation-bounded measure, whose Fourier transform is the
	(also translation-bounded) pure point measure
	\begin{equation}
		\hat{\mu}=\frac{1}{\det \Gamma}
		\sum_{\gamma^{*}\in\Gamma^{*}}\varphi( p_2(\gamma^{*})
		)\delta_{p_1(\gamma^{*})}.
		\label{RP.9.2}
	\end{equation}
\par
However, the compact support of $\hat{\varphi}$ implies that $\varphi$ is an entire function,
and so $\varphi$ cannot also be supported on a bounded set. Hence
 the spectrum of the measure $\mu$ is only known to be contained in $p_1(\Gam^*)$, 
and so it is generally everywhere dense in $\R^n$.
\par
Nevertheless,  we will see that one can construct a function $\varphi$ with sufficiently many zeros,
in such a way that the non-zero atoms in \eqref{RP.9.2} in fact lie in a nowhere dense set.

\subsection{} \label{section_8.2}
A set $\Lambda\subset\R^n$ is said to have a \emph{uniform density} $D(\Lambda)$ if 
\[ \frac{ \#\left( \Lambda \cap (x+B_R) \right)}{|B_R|} \to D(\Lambda) \]
as $R\to \infty$ uniformly with respect to $x\in \R^n$. 
\begin{lem}
	\label{EL.2}
	Let $\Lambda=\Lambda(\Gamma,\Omega)$ be a model set such that the boundary of
	$\Omega$ is a set of Lebesgue measure zero in $\R^m$. Then $\Lambda$ has uniform
	density 
	\[ D(\Lambda) = \frac{\mes(\Omega)}{\det(\Gamma)}. \]
\end{lem}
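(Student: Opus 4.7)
The plan is to reduce the density claim to a uniform ergodic theorem for an $\R^n$-action on the torus $\T := \R^{n+m}/\Gamma$, exploiting the hypothesis that $p_2(\Gamma)$ is dense in $\R^m$. Since $p_1$ is injective on $\Gamma$, we may rewrite
\[
\#\bigl(\Lambda \cap (x + B_R)\bigr) \;=\; \#\bigl\{\gamma \in \Gamma : p_1(\gamma) \in x + B_R,\; p_2(\gamma) \in \Omega\bigr\},
\]
so the task is to count lattice points of $\Gamma$ in the ``tube'' $(x + B_R) \times \Omega \subset \R^{n+m}$ uniformly in $x$ as $R \to \infty$.

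I would work on $\T$ with its normalized Haar probability measure $\mu_\T$, equipped with the continuous $\R^n$-action $v \cdot [y,z] := [y+v,z]$. The orbit of $[0,0]$ is $(\R^n \times \{0\}) + \Gamma$ modulo $\Gamma$, whose density in $\T$ is equivalent to $p_2(\Gamma)$ being dense in $\R^m$, and is therefore granted by hypothesis. Since all orbits are cosets of the subgroup $\{[v,0] : v\in\R^n\}\subset\T$ they share a common closure, so density of one orbit forces minimality, and by the classical theory of linear flows on tori minimality implies unique ergodicity. Consequently, for every continuous $g : \T \to \C$,
\[
\frac{1}{|B_R|}\int_{B_R} g(v \cdot q)\, dv \;\longrightarrow\; \int_\T g\, d\mu_\T \quad (R \to \infty),
\]
uniformly in $q \in \T$.

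To turn this into a counting statement, pick a nonnegative bump $\eta$ on $\R^n$ with $\supp(\eta) \subset B_\rho$ and $\int \eta = 1$, and a continuous compactly supported $f$ on $\R^m$. Then $g(y,z) := \sum_{\gamma\in\Gamma}\eta(y - p_1(\gamma))\,f(z - p_2(\gamma))$ is a continuous $\Gamma$-periodic function on $\R^{n+m}$ with $\int_\T g\, d\mu_\T = (\int f)/\det(\Gamma)$. Unfolding the ergodic average at $q = [x,0]$, each $\gamma$ with $p_1(\gamma) \in x + B_{R-\rho}$ contributes exactly $f(-p_2(\gamma))$ and those with $p_1(\gamma) \notin x + B_{R+\rho}$ contribute zero; the annular boundary term involves $O(R^{n-1})$ lattice points and is thus negligible after dividing by $|B_R|$. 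This yields
\[
\frac{1}{|B_R|}\sum_{\substack{\gamma\in\Gamma\\ p_1(\gamma)\in x+B_R}} f(-p_2(\gamma)) \;\longrightarrow\; \frac{\int f}{\det(\Gamma)}
\]
uniformly in $x \in \R^n$. Using $\mes(\partial \Omega)=0$, I would then sandwich $\1_\Omega$ between continuous compactly supported $f^- \le \1_\Omega \le f^+$ with $\int(f^+ - f^-) < \eps$, apply the preceding limit to $z\mapsto f^\pm(-z)$, and let $\eps\to 0$. The main obstacle is the uniform ergodic theorem, which rests on the classical unique ergodicity of minimal translation flows on a torus; the remaining pieces are the boundary-annulus estimate indicated above and the standard regularization of $\1_\Omega$.
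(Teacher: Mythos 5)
Your argument is correct and complete in outline: reduce to counting $\Gamma$-points in $(x+B_R)\times\Omega$, pass to the torus $\R^{n+m}/\Gamma$, use that density of $p_2(\Gamma)$ makes the translation action of $\R^n\times\{0\}$ minimal and hence uniquely ergodic, unfold the uniform ergodic average of a periodized bump $\eta\otimes f$ with an $O(R^{n-1})$ annulus error, and finally squeeze $\1_\Omega$ between continuous functions using $\mes(\partial\Omega)=0$. The paper itself offers no proof of Lemma~\ref{EL.2} --- it only cites \cite[Proposition 5.1]{matei-meyer-simple} --- and your equidistribution-on-the-torus route is exactly the standard argument underlying that reference, so there is nothing to object to; the only points worth making explicit in a written version are (i) the uniform discreteness of $\{p_1(\gamma):p_2(\gamma)\in\supp f\}$ (which uses the injectivity of $p_1|_\Gamma$ and justifies the $O(R^{n-1})$ annulus count), and (ii) that uniform convergence of the averages over the starting point $q=[x,0]$ is what delivers the uniformity in $x$ required by the definition of uniform density in Subsection~\ref{section_8.2}.
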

A proof of this fact can be found e.g.\ in \cite[Proposition 5.1]{matei-meyer-simple}. 
\subsection{} \label{8 .3}
We will  now assume that $m=1$, that is, $\Gam$ is a lattice in $\R^n \times \R$.
The following theorem is the main result of this section.
\begin{thm}
	\label{ET.1}
	For any $\varepsilon > 0$ there is a non-zero Schwartz function $\varphi\ge 0$ on
	$\R$, such that: 
	\begin{enumerate-math}
	\item The measure $\mu$ in \eqref{RP.9.1} is supported by the model set
		$\Lambda(\Gamma,(-\varepsilon,\varepsilon))$;
	\item The spectrum of $\mu$ is a nowhere dense set in $\R^n$. 
	\end{enumerate-math}
\end{thm}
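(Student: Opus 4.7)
The plan is to refine Meyer's construction: I choose the Schwartz function $\varphi\ge 0$ in \eqref{RP.9.1} so that it vanishes at sufficiently many points of $p_2(\Gamma^*)$, which will force the atoms in \eqref{RP.9.2} to live in a nowhere dense set of $\R^n$. Concretely, I take $\varphi = |f|^2$ for a non-zero Schwartz function $f$ on $\R$ with $\hat f$ supported in the open interval $(-\varepsilon/2,\varepsilon/2)$. Then $\varphi \ge 0$ is Schwartz, and $\hat\varphi = \hat f \ast \overline{\hat f(-\cdot)}$ is supported in $(-\varepsilon,\varepsilon)$, so condition (i) holds automatically: every $\gamma \in \Gamma$ contributing an atom to $\mu$ satisfies $p_2(\gamma) \in (-\varepsilon,\varepsilon)$. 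Non-vanishing of $\mu$ will also come for free, since $\hat\varphi$ is continuous and non-identically zero, hence non-zero on a non-empty open set, which meets the dense set $p_2(\Gamma)$.

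To arrange (ii), fix a countable basis $\{U_k\}_{k\ge 1}$ of open balls in $\R^n$ (e.g.\ balls of rational centers and radii). For each $k$ I choose a concentric open ball $V_k\subset U_k$ of small measure $\mes(V_k)<\eta\cdot 2^{-k}$, where $\eta>0$ will be fixed below. Applying \lemref{EL.2} to the dual lattice $\Gamma^*$, with the roles of $p_1$ and $p_2$ interchanged (the hypothesis is met because $\partial V_k$ is a sphere, of measure zero), the set
\[
T_k := \{p_2(\gamma^*)\,:\, \gamma^*\in\Gamma^*,\ p_1(\gamma^*)\in V_k\}
\]
is a model set of uniform density $\mes(V_k)\cdot\det(\Gamma)$ in $\R$. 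Hence $T:=\bigcup_k T_k$ has upper density at most $\eta\cdot\det(\Gamma)$. If I can arrange that $f$ vanishes on $T$, then for every $\gamma^*$ with $p_1(\gamma^*)\in V_k$ the coefficient $\varphi(p_2(\gamma^*))$ in \eqref{RP.9.2} is zero, so the spectrum of $\mu$ is disjoint from $V_k$. Since each basis ball $U_k$ contains such a $V_k$, the spectrum is nowhere dense.

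The remaining, and most delicate, step is to produce a non-zero Schwartz function $f$ with $\hat f\subset(-\varepsilon/2,\varepsilon/2)$ that vanishes on $T$. I fix $\eta$ small enough that the upper density of $T$ is strictly below the Beurling critical threshold for the Paley--Wiener space of exponential type $\pi\varepsilon/2$, and write $f=f_0\cdot g$, where $f_0$ is a fixed Schwartz function with $\hat{f_0}$ smooth and supported in a tiny interval around the origin, and $g$ is a non-zero bounded entire function of small exponential type whose zero set contains $T$. The existence of $g$ follows from the classical Paley--Wiener--Beurling interpolation theory. Then $f=f_0g$ is Schwartz (the rapid decay of $f_0$ absorbs the boundedness of $g$), has Fourier support in $(-\varepsilon/2,\varepsilon/2)$, and vanishes on $T$, finishing the construction.

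The main obstacle is exactly this one-dimensional interpolation step: constructing $g$ bounded on $\R$, of very small exponential type, and vanishing on the prescribed dense countable set $T$. It is the quantitative control of the density of $T$, afforded by the geometric construction with rapidly shrinking $V_k$ and \lemref{EL.2}, that brings this within the reach of the classical density theorems.
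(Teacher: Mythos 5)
Your overall strategy---take $\varphi=|f|^2$ with $\spec(f)$ small, and force $f$ to vanish on the $p_2$-projections of the dual lattice points lying over a dense open union of small balls---is exactly the paper's strategy. But there is a genuine gap at the step where you claim that $T:=\bigcup_k T_k$ ``has upper density at most $\eta\cdot\det(\Gamma)$'' and can therefore be killed by a function of small exponential type. Each $T_k$ is a relatively dense set of positive uniform density, and you are taking a union of \emph{infinitely many} of them. Two problems follow. First, $T$ need not be a discrete closed set at all: a fixed bounded interval can meet infinitely many of the $T_k$, so $T$ may have finite accumulation points, in which case \emph{no} non-zero entire function of exponential type (hence no non-zero $g$) can vanish on $T$. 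Second, even granting discreteness, the Beurling--Malliavin upper density is not subadditive in the naive way you use: the uniform density of $T_k$ only controls $\#(T_k\cap I)/|I|$ for intervals $I$ of length at least some $M_k$, and $M_k\to\infty$, so on any fixed interval the tail $\sum_{k:\,M_k>|I|}\#(T_k\cap I)$ is completely uncontrolled. Your ``$2^{-k}$'' choice shrinks the densities but does nothing to control this tail.

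The paper repairs precisely this point with a truncation that your proposal is missing: it replaces each $Q_j$ by $Q_j':=Q_j\setminus(-T_j,T_j)$ with $T_j=M_j^3+M_j$ chosen so that, for any substantial system of intervals, some interval $I_k$ with $\dist(0,I_k)\le|I_k|^3$ meets only those $Q_j'$ with $M_j\le|I_k|$, for which the density bound does apply; this yields $D^*(\bigcup_j Q_j')<\varepsilon$ and lets Beurling--Malliavin produce $\psi$ vanishing on the truncated union (\lemref{EL.3}). The truncation has a price that must then be paid on the $\R^n$ side: since $\psi$ is not required to vanish at the finitely many $p_2(\gamma^*)$ with $p_1(\gamma^*)\in B_j$ and $|p_2(\gamma^*)|<T_j$, one must shrink each $B_j$ to a smaller ball $\Omega_j$ with $\Omega_j\times(-T_j,T_j)$ disjoint from $\Gamma^*$, and conclude that the spectrum avoids the dense open set $\bigcup_j\Omega_j$. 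Both the truncation and this compensating shrinkage are absent from your argument, and without them the interpolation step you call ``the most delicate'' does not go through.
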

Observe that the support of $\mu$ cannot be covered by a finite 
union of translates of any lattice, since it contains a model set. Hence this 
result implies \thmref{RT.15}. The condition $\varphi\ge 0$ guarantees  that $\mu$ is a
positive-definite measure. 
\par
The proof of \thmref{ET.1} depends on the following: 
\begin{lem}
	\label{EL.3}
	For each $j\ge 1$ let $Q_j\subset\R$ be a   set  with
	uniform density $D(Q_j)$. Assume that 
	\begin{equation}
		\label{EL.3.2}
		\sum_{j\geq 1} D(Q_j)< a.
	\end{equation}
	Then one can find positive numbers $T_j$ 
and a non-zero Schwartz function $\varphi$ on $\R$ such that: 
	\begin{enumerate-math}
	\item $\spec(\varphi)\subset (0,a)$;
	\item $\varphi$ vanishes on the set $Q$ defined by
		\begin{equation}
			\label{EL.3.1}
			Q:= \bigcup_{j\geq 1} Q_j'  , \qquad
			Q_j':=Q_j\setminus(-T_j,T_j).
		\end{equation} 
	\end{enumerate-math}
\end{lem}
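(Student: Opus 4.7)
The plan is to construct $\varphi$ in the form $\varphi = \eta \cdot G$, where $G$ is an entire function of exponential type strictly less than $\pi a$ that vanishes on $Q$, and $\eta$ is a Schwartz function whose Fourier transform is supported in a thin interval positioned so that the product has spectrum in $(0,a)$. The product will then be Schwartz because $G$ is polynomially bounded on the real axis, and the vanishing of $G$ on $Q$ supplies the required zeros of $\varphi$.

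The first step is to pick $a_j > D(Q_j)$ with $a' := \sum_j a_j < a$ (possible by hypothesis) and to build $G$ as an infinite product $G = \prod_j G_j$. For each $j$, let $G_j$ be a Weierstrass canonical product of genus $\le 1$ over $Q_j' = Q_j \setminus (-T_j,T_j)$, normalized by $G_j(0)=1$. By the uniform density assumption on $Q_j$, $G_j$ is entire of exponential type at most $\pi D(Q_j) \le \pi a_j$, and with suitable convergence factors it can be arranged to grow at most polynomially on $\R$. The parameter $T_j$ enters only after fixing an auxiliary sequence $R_j \uparrow \infty$: one takes $T_j$ large enough that $|G_j(z)-1| \le 2^{-j}$ for $|z|\le R_j$, which is achievable because the individual factors $(1-z/q)e^{z/q}$ tend uniformly to $1$ on bounded sets as $|q|\to\infty$.

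With these estimates the infinite product $G := \prod_j G_j$ converges uniformly on compacta to a non-zero entire function of exponential type at most $\pi a'$, polynomially bounded on $\R$, with zero set containing $Q := \bigcup_j Q_j'$. Consequently $\hat G$ is a tempered distribution supported in $[-a'/2, a'/2]$. Finally, choose a Schwartz function $\eta$ with $\hat\eta$ compactly supported in a short interval $(\alpha,\alpha+\delta)$ with $\alpha > a'/2$ and $\alpha+\delta + a'/2 < a$, and with $\eta(x_0)G(x_0)\neq 0$ for some $x_0$. Then $\varphi := \eta \cdot G$ is Schwartz (Schwartz times polynomially bounded), non-zero at $x_0$, vanishes on $Q$, and satisfies $\spec(\varphi) \subset (\alpha - a'/2,\, \alpha+\delta+a'/2) \subset (0,a)$.

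The main obstacle is controlling the growth of $G$ on the real axis. For an arbitrary set $Q_j$ with only uniform density and no separation assumption, the canonical product may oscillate unboundedly between zeros, and keeping it polynomially bounded on $\R$ requires care with the convergence factors---for instance by pairing $q$ with $-q$ when possible, or by normalizing through an auxiliary sine-type function with the same asymptotic zero density. The convergence estimate $|G_j - 1| \le 2^{-j}$ on $[-R_j,R_j]$ is, by contrast, essentially routine once $T_j$ is permitted to be chosen as large as needed after $R_j$ is fixed.
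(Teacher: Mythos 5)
Your overall architecture (build an entire function of small exponential type vanishing on $Q$, then multiply by a Schwartz function to translate the spectrum into $(0,a)$) is reasonable, but the central step is not established, and it is precisely the hard part. You assert that the canonical product $G_j$ over $Q_j'$ has exponential type at most $\pi D(Q_j)$ \emph{and} can be arranged to be polynomially bounded on $\R$, and then that the infinite product $\prod_j G_j$ inherits both properties. Neither claim follows from uniform density alone. For a set with only a density assumption and no arithmetic or separation structure, the canonical product can grow superpolynomially on the real axis (its real-axis behaviour is governed by logarithmic integrals of the counting-function error, not by the density), and your proposed remedies --- pairing $q$ with $-q$, or normalizing by a sine-type function --- do not work in general; if they did, the Beurling--Malliavin multiplier theorem would be an exercise. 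Moreover, even granting polynomial bounds $|G_j(x)|\le C_j(1+|x|)^{N_j}$ for each $j$, the infinite product need not be polynomially bounded: your normalization $|G_j-1|\le 2^{-j}$ on $[-R_j,R_j]$ controls convergence on compacta only and says nothing about $|x|>R_j$, so you would additionally need $\sum_j N_j<\infty$ and control of the constants, which you do not address. In short, you have flagged ``the main obstacle'' yourself but left it unresolved, and the obstacle is the whole content of the lemma.

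The paper avoids constructing any product at all. It chooses $\gamma_j>D(Q_j)$ with $\sum_j\gamma_j<d<a$, picks $M_j$ so that $\#(Q_j\cap I)\le\gamma_j|I|$ for every interval with $|I|\ge M_j$, and sets $T_j:=M_j^3+M_j$. The cubic choice guarantees that every substantial system of intervals contains an interval $I_k$ with $\dist(0,I_k)\le|I_k|^3$, and for such an interval each $Q_j'$ with $M_j>|I_k|$ misses $I_k$ entirely (since $I_k\subset(-T_j,T_j)$), while the remaining $Q_j'$ contribute at most $\gamma_j|I_k|$ points each; hence $\#(Q\cap I_k)<d\,|I_k|$ and the Beurling--Malliavin upper density satisfies $D^{*}(Q)\le d<a$. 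The Beurling--Malliavin theorem then supplies the nonzero function with $\spec\subset(0,a)$ vanishing on $Q$, and a final multiplication by a Schwartz function with small spectrum makes it Schwartz. If you want to salvage your approach, you must either invoke Beurling--Malliavin (in which case the density computation above, with its specific choice of $T_j$, is what you are missing) or supply a genuinely new construction of a polynomially bounded type-$\pi a'$ function vanishing on $Q$, which is a substantially harder task than the lemma itself.
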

Before we prove \lemref{EL.3}, let us first show how to deduce \thmref{ET.1} from it.
\begin{proof}[Proof of \thmref{ET.1}]
	Let $\{x_j\}$ $(j\ge 1)$ be a sequence of points which are dense in $\R^n$. For
	each $j$, choose an open ball $B_j$ centered at the point $x_j$, in such a way
	that 
	\begin{equation}
		\label{EL.4.1}
		\sum_{j\ge 1} \mes(B_j)< \frac{\varepsilon}{\det(\Gamma)}.
	\end{equation}
	Consider the sets $Q_j \subset \R$ defined by 
	\[ Q_j := \left\{ p_2(\gamma^{*})\; :\; \gamma^{*}\in\Gamma^{*}, \;
		p_1(\gamma^{*})\in B_j  \right\}.
	   \]
	Then each $Q_j$ is a model set, with uniform density 
	\[ D(Q_j)= \det(\Gamma) \mes(B_j) \]
	according to \lemref{EL.2}. Due to \eqref{EL.4.1} this implies that 
	\[ \sum_{j\ge 1} D(Q_j)< \varepsilon.  \]
	\lemref{EL.3} therefore gives a sequence $\{T_j\}$ of positive numbers,
	and  a non-zero Schwartz function $\psi$ on $\R$, 
	$\spec(\psi) \subset (0,\varepsilon)$,  such that $\psi$  
	vanishes on the set $Q$ in \eqref{EL.3.1}.
	Hence  $\varphi := |\psi|^2 \geq 0$  is also a Schwartz function vanishing on
	$Q$, and $\spec(\varphi)\subset (-\varepsilon,\varepsilon)$.
\par
	For each $j\ge 1$ there are only finitely many points 
	of the lattice $\Gamma^{*}$ lying in the set $B_j\times (-T_j,T_j)$, so we
may choose an open ball $\Omega_j$ contained in
	$B_j$ such that $\Omega_j\times (-T_j,T_j)$ has no points in common
	with $\Gamma^{*}$. Notice that the set 
	\[ \Omega := \bigcup_{j\ge 1} \Omega_j \]
	is an open, dense set in $\R^n$.
\par
	We claim that the spectrum of the measure
	\eqref{RP.9.1} does not intersect the set $\Omega$. Indeed, by \eqref{RP.9.2}, an
	element of the spectrum is a point of the form $p_1(\gamma^{*})$, where
	$\gamma^{*}\in \Gamma^{*}$ and $\varphi ( p_2(\gamma^{*})  ) \ne 0$. 
	If $p_1(\gamma^{*})\in \Omega_j$ for some $j$, then we must have
	$|p_2(\gamma^{*})|\ge T_j$. Hence 
\[ p_2(\gamma^{*})\in Q_j\setminus(-T_j,T_j) \subset Q,
\]
 which is not
	possible as $\varphi$ vanishes on $Q$. 
\par
	We conclude that the spectrum $S$ of the
	measure $\mu$ is contained in the closed, nowhere dense set $\R^n\setminus
	\Omega$. On the other hand, the support $\Lambda$ of the
	measure  is contained in the model set 
	$\Lambda (\Gamma, (-\varepsilon,\varepsilon) )$, so this completes the proof. 
\end{proof}
\subsection{ } \label{section_8.4}
It remains to prove \lemref{EL.3}. For this we will use
the celebrated Beurling and Malliavin theorem, see \cite{bm67}.
\par
First we recall the definition of the \emph{Beurling-Malliavin upper
density} (there are several equivalent ways to define this
density). By a \define{substantial system} of intervals  we mean a
system $\{I_k\}$ of disjoint open intervals on $\R$, such that
$\inf_{k}|I_k|>0$, and 
\[	\sum_{k}\left( \frac{|I_k|}{1+\dist(0,I_k)} 
	\right)^2 = \infty. \]
\par
If $\Lambda\subset\R$ is a discrete closed set, then its
Beurling-Malliavin upper density $D^{*}(\Lambda)$ is defined to be the
supremum of the numbers $d>0$, for which there exists a
substantial system $\{I_k\}$ satisfying 
\[	\frac{\# (\Lambda \cap I_k)}{|I_k|}\ge d \]
for all $k$. If for any $d>0$ no such a system  $\{I_k\}$ exists, then $D^{*}(\Lambda)=0$.
\begin{thm}[Beurling and Malliavin \cite{bm67}]
\label{ET.2}
Let $\Lambda\subset \R$ be a discrete closed set. Then for any
$a>D^{*}(\Lambda)$ one can find a non-zero function $\varphi\in
L^{2}(\R)$ such that: 
\begin{enumerate-math}
\item $\spec(\varphi)\subset (0,a)$;
\item $\varphi$ vanishes on $\Lambda$. 
\end{enumerate-math}
\end{thm}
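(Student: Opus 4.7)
The final statement is the Beurling--Malliavin density theorem, a classical and deep result in harmonic analysis. The plan is to invoke it directly from \cite{bm67} rather than attempt to re-derive it; the authors' phrasing (``the celebrated Beurling and Malliavin theorem'') signals this as the intended approach, and the subsequent argument will treat it as a black box.

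For context, the original proof proceeds in three stages. First, by Paley--Wiener the assertion is equivalent (after a frequency shift) to exhibiting a non-zero $L^2$ function in the Paley--Wiener space of exponential type $\pi a$ that vanishes on $\Lambda$. Second, one constructs an entire function of exponential type with zeros exactly on $\Lambda$ via a canonical Hadamard/Blaschke product, whose growth is controlled by the Beurling--Malliavin upper density; here the hypothesis $a > D^{*}(\Lambda)$ is what allows the logarithm of this product to admit an admissible majorant on the real line. Third, one applies the Beurling--Malliavin multiplier theorem to multiply this canonical product by a small-exponential-type $L^2$ function, damping its real-line growth so that the result lies in the Paley--Wiener space.

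The main obstacle in such a program is the multiplier theorem itself, which rests on delicate harmonic conjugate estimates and the full theory of admissible majorants, and is the technical heart of the original Beurling--Malliavin argument. In the context of the present paper there is no reason to attempt a re-derivation; invoking \cite{bm67} is standard and sufficient. The substantive work that will actually follow is not the proof of \thmref{ET.2} but its preparation for \lemref{EL.3}: one must choose the truncation parameters $T_j$ so that, for $Q=\bigcup_j Q_j'$, the union satisfies $D^{*}(Q) \le D^{+}(Q) \le \sum_j D(Q_j) < a$, exploiting the uniform density of each $Q_j$ to make the tail estimates uniform in the interval $I$ and summable in $j$. Afterwards, the $L^2$ function produced by \thmref{ET.2} with spectrum in $(0,a-\eta)$ (for small $\eta>0$) must be upgraded to a Schwartz function with spectrum still in $(0,a)$, which can be done by multiplication with a Schwartz bump whose Fourier transform is supported in a tiny interval of length less than $\eta$.
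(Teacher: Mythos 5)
Your proposal matches the paper exactly: the theorem is a cited classical result, stated with attribution to \cite{bm67} and used as a black box, with no proof given in the paper. Your added remark about upgrading the $L^2$ function to a Schwartz function by multiplying with a function of small spectrum is precisely the observation the authors make immediately after the theorem statement.
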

By multiplying $\varphi$ by a Schwartz function with a sufficiently small 
spectrum, it is clear that one may assume the function $\varphi$ in this 
theorem to belong to the Schwartz class.
\par
\begin{remark}
It was also proved by 
Beurling and Malliavin that if $a<D^{*}(\Lambda)$ then no such
$\varphi$ exists; however we will not use this part of their
result. 
\end{remark}
\begin{proof}[Proof of \lemref{EL.3}]
Choose a sequence $\{\gamma_j\}$ and a number $d$ such that 
\[ D(Q_j) < \gamma_j  \quad \text{and} \quad \sum_{j\ge 1} \gamma_j < d <a. \]
For each $j$  find a  number $M_j$, such
that for any interval $I $ of length $|I|\ge M_j$ we have 
\begin{equation}
	\label{EL.3.7}
	\#(Q_j\cap I) \leq \gamma_j \, |I|.
\end{equation}
Define
\[ T_j := M_j^3 +M_j. \]
We  claim that if $Q$ is the set given by \eqref{EL.3.1},
then $D^{*}(Q)\le d$.
\par
Let $\{I_k\}$ be a substantial
system of intervals on $\R$. Observe first that there must exist at least one
interval $I_k$ satisfying
\begin{equation}
	\label{EL.3.8}
	 \dist(0, I_k) \le |I_k|^3.
\end{equation}
For otherwise, using the assumption that $\inf_k
|I_k|>0$, this would imply that 
\[ \sum_k \left( \frac{|I_k|}{1+\dist(0,I_k)} \right)^2 \le 
\sum_k \big(1+\dist (0,I_k) \big)^{-4/3} < \infty,\]
which is not possible since the system $\{I_k\}$ is substantial.
\par
Now consider an interval $I_k$ from the system, satisfying \eqref{EL.3.8}. We will show
that 
\begin{equation}
	\label{EL.3.9}
	\frac{\# (Q\cap I_k)}{|I_k|}<d.
\end{equation}
Indeed, we have
\begin{equation}
	\label{EL.3.10}
	\#(Q\cap I_k) \le \sum_{j\ge 1}\# (Q_j'\cap I_k).
\end{equation}
Notice that for each $j$ such that $|I_k|<M_j$, it follows from \eqref{EL.3.8} that 
\[I_k\subset (-M_j^3-M_j,M_j^3+M_j) = (-T_j,T_j),\]
and hence $I_k$ contains no points in common with $Q_j'$.
On the other hand, for each $j$  such that $|I_k|\ge M_j$ we have 
\[\#(Q_j'\cap I_k) \leq \gamma_j \, |I_k|\]
 according to \eqref{EL.3.7}. Hence 
\begin{equation}
	\label{EL.3.10.1}
 \sum _{j\ge 1}\# (Q_j' \cap I_k) \leq   \sum_{j \, :\,  M_j \leq |I_k|} \gamma_j \, |I_k|  < d \, |I_k|.
\end{equation}
Combining \eqref{EL.3.10} and \eqref{EL.3.10.1} confirms that \eqref{EL.3.9} holds.
\par
We have thus shown that any substantial system $\{I_k\}$ contains
intervals for which \eqref{EL.3.9} holds. Hence
$D^{*}(Q)\le d< a$. The proof is now concluded by \thmref{ET.2}.
\end{proof}


\section{Hof's quasicrystals}
There exist also other approaches to the concept of
quasicrystals. One of them, which is due to Hof \cite{hof}, was
studied by many authors, see for example
\cite{lag2}, \cite[Chapter 9]{bagr} and the references
therein. In this context, the diffraction spectrum of a point set
$\Lambda$ is defined through the Fourier transform of an autocorrelation 
measure $\gamma_\Lam$, which is associated to the set $\Lambda$ by a
certain limiting procedure. 
\par
In this section we first recall Hof's notion of diffraction, and then
apply our previous results to analyze  diffraction spectra 
of Delone sets with finite local complexity.

\subsection{} \label{9.1}
Let $\Lambda\subset\R^n$ be a Delone set.
Hof proposed  to understand the diffraction by $\Lam$ using the following procedure.
For  each $R>0$, consider the measure
\[\gamma_\Lambda^R := (2R)^{-n} \sum_{\lambda,\lambda'\in \Lambda\cap [-R,R]^n}\delta_{\lambda'-\lambda}. \]
It is a finite measure on $\R^n$ which is both positive and positive-definite.
 The uniform discreteness of $\Lambda$ implies that the measures
$\gamma_\Lambda^R$ are all translation-bounded, with the constant
in \eqref{tr-bdd-def} bounded uniformly with respect to $R$. 
Hence there exists at least one
weak limit point $\gamma_\Lambda$ of the measures
$\gamma_\Lambda^R$ as $R\to\infty$. Any such limit point
$\gamma_\Lambda$ is called an \define{autocorrelation measure} of
the set $\Lambda$.
 The  measure $\gamma_\Lambda$  is also
 translation-bounded, positive and positive-definite. 
\par
The positive-definiteness of $\gamma_\Lambda$ implies that
 its Fourier transform
$\hat{\gamma}_\Lambda$ is a positive measure. It is called a
\define{diffraction measure} of $\Lambda$.  If the measure
$\hat{\gamma}_\Lambda$ is purely atomic, then 
its support $S$ is called a
\define{diffraction spectrum} of $\Lambda$, and  $\Lambda$ is
said to be a  \define{pure point diffractive set}.
\par
More generally, one can define diffraction by any 
translation-bounded measure  $\mu$ on
$\R^n$, in a similar way. Denote by $\mu_R$ the restriction of $\mu$ to the
cube $[-R,R]^n$, and define a measure $\tilde{\mu}_R$ by 
\[ \tilde{\mu}_R(E):= \overline{\mu_R(-E)}.\]
Then  the  measures 
\[\gamma_\mu^R:=(2R)^{-n} \; \mu_R * \tilde{\mu}_R\]
are uniformly translation-bounded and so 
have at least one weak limit point $\gamma_\mu$ as $R\to\infty$, and any such
limit point is called an autocorrelation measure
of $\mu$. It is again a translation-bounded, positive-definite
measure, and if $\mu$  is a positive measure then
also $\gam_\mu$ is positive.
The  diffraction measure
$\hat{\gamma}_\mu$ and the diffraction spectrum $S$
(assuming that $\hat{\gamma}_\mu$ is  purely atomic) are also
defined in a similar way. 
\par
Notice that the diffraction by a Delone set $\Lam$ described above, is included as a
special case which corresponds to diffraction by the measure 
\begin{equation}
	\label{DD.1}
	\mu = \sum_{\lambda\in\Lambda}\delta_\lambda.
\end{equation}

\subsection{} \label{9.2}
A Delone set $\Lambda \subset \R^n$ is said to be of
\define{finite local complexity} if for every $R>0$ there
are only finitely many different sets of the form
\[ (\Lambda - \lambda)\cap B_R, \quad \lambda \in \Lambda. \] 
It is easy to verify that this condition is equivalent to the requirement that 
$\Lambda-\Lambda$ is a discrete closed set. 
\par
Notice that if    $\mu$ is a  translation-bounded measure supported by a Delone set $\Lambda$ of
finite local complexity, then any autocorrelation measure $\gamma_\mu$ of $\mu$ must be supported by 
the set $\Lambda-\Lambda$. In particular, $\gamma_\mu$ is a discrete measure.
\par
Model sets are well-studied
examples of non-periodic Delone sets of finite local complexity,
with  pure point diffraction in Hof's sense. More precisely, if
$\Lambda=\Lambda(\Gamma,\Omega)$ is a model set defined by
\eqref{RP.9.0} and such that the
boundary of the ``window'' $\Omega$ is a set of Lebesgue measure
zero, then $\Lambda$ is a pure point diffractive set, with a
dense countable diffraction spectrum (see for example  \cite[Section 9.4]{bagr}).

\subsection{} 
Let $\Lambda \subset \R^n$ be a Delone set  of finite local complexity.
Assume that the diffraction spectrum $S$ is  uniformly discrete. 
Is it true that $S$ must have a periodic structure?
\par
The question was raised in  \cite[Problem 4.2(a)]{lag2}.
It follows from our previous results that the answer is positive:
\begin{thm}
	\label{DT.1}
Suppose that
\begin{enumerate-math}
\item
$\Lambda \subset \R^n$ is a
	Delone set  of finite local complexity;
\item
$\mu$ is a positive, translation-bounded measure supported by $\Lam$;
\item
$\gamma_\mu$ is  an autocorrelation measure of $\mu$; and
\item
the support $S$ of the diffraction
	measure $\hat{\gamma}_\mu$ is a uniformly discrete set. 
\end{enumerate-math}
Then $S$ is contained in a finite union of translates of a certain lattice,
and the  diffraction measure has the form \eqref{RM.3}.
\end{thm}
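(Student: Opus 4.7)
The plan is to apply \thmref{RT.5} to the diffraction measure $\hat{\gamma}_\mu$ itself, exploiting Fourier duality to swap the roles of support and spectrum. The key observation is that the Fourier dual of the hypotheses in \thmref{RT.5} is exactly the hypothesis at hand: we have $\hat{\gamma}_\mu$ with uniformly discrete support $S$, and the role of the ``discrete closed spectrum'' will be played by the support of $\gamma_\mu$, which is forced to be discrete closed by the finite local complexity of $\Lambda$.

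First I would collect the standard properties of $\gamma_\mu$ from \S\ref{9.1}--\S\ref{9.2}: $\gamma_\mu$ is a translation-bounded, positive, positive-definite measure, and since $\Lambda$ has finite local complexity, the set $\Lambda-\Lambda$ is discrete closed, so $\gamma_\mu$ is a pure point measure supported on this discrete closed set. In particular $\gamma_\mu$ is a slowly increasing measure satisfying the analogue of \eqref{RM.1} with countable support contained in $\Lambda-\Lambda$.

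Next, set $\nu:=\hat{\gamma}_\mu$. By assumption $\nu$ is a positive, purely atomic measure whose support is the uniformly discrete set $S$, so $\nu$ satisfies \eqref{RM.1}. Its Fourier transform coincides, up to reflection, with $\gamma_\mu$, which by the previous paragraph is a slowly increasing pure point measure whose support is contained in the discrete closed set $\Lambda-\Lambda$; hence $\nu$ satisfies \eqref{RM.2} and its spectrum is a discrete closed set. Moreover, $\hat{\nu}$ is (the reflection of) the positive measure $\gamma_\mu$, so $\nu$ is positive-definite. Thus all the hypotheses of \thmref{RT.5} are met for $\nu$, with the roles of ``support'' and ``spectrum'' played by $S$ and a subset of $\Lambda-\Lambda$ respectively.

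Applying \thmref{RT.5} to $\nu=\hat{\gamma}_\mu$ therefore yields that $\hat{\gamma}_\mu$ is representable in the form \eqref{RM.3}, i.e.\
\[ \hat{\gamma}_\mu = \sum_{j=1}^{N}\sum_{s\in L+\tau_j} P_j(s)\,\delta_s \]
for some lattice $L$, vectors $\tau_j$, and trigonometric polynomials $P_j$. Reading off the support, $S\subset \bigcup_{j=1}^{N}(L+\tau_j)$, which is a finite union of translates of $L$. Both conclusions of \thmref{DT.1} follow. There is no substantive obstacle beyond verifying the Fourier-dual setup: the only point that requires the finite local complexity assumption is ensuring that the ``spectrum'' of $\nu$ (namely the support of $\gamma_\mu$) is discrete closed, so that \thmref{RT.5} is genuinely applicable.
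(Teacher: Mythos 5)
Your proposal is correct and follows essentially the same route as the paper: apply \thmref{RT.5} to $\nu=\hat{\gamma}_\mu$, using that $\gamma_\mu$ is a positive measure supported on the discrete closed set $\Lambda-\Lambda$ to get positive-definiteness of $\nu$ and discreteness of its spectrum. The paper's own proof is exactly this argument, stated more briefly.
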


\begin{proof}
The 
	autocorrelation measure $\gamma_\mu$ is positive, and is supported by $\Lambda-\Lambda$. Hence the
	diffraction measure $\hat{\gamma}_\mu$ is a positive-definite measure on $\R^n$, whose
	support $S$ is a uniformly discrete set, and whose spectrum is contained in the
	discrete closed set $\Lambda-\Lambda$. \thmref{RT.5} (applied to the measure
	$\hat{\gamma}_\mu$) therefore yields that $\hat{\gamma}_\mu$
	is of the form \eqref{RM.3}. As a consequence,  $S$ 
	must be contained in a finite union of translates of a  lattice.
\end{proof}

In particular \thmref{DT.1} applies to the measure \eqref{DD.1}.
In this case the result shows that if a Delone set
$\Lambda$ of finite local complexity is pure point diffractive, 
and if the diffraction spectrum $S$ is uniformly discrete, then
$S$ is contained in a finite union of translates of a lattice,
and the  diffraction measure has the form \eqref{RM.3}.
So we obtain \thmref{RT.8}.

	\subsection{}
If $\Lambda$ is a Meyer set, then the conclusion in the previous result
remains true even if the spectrum is just  a discrete closed set, and
without the positivity of the measure:
\begin{thm}
	\label{DT.2}
Suppose that
\begin{enumerate-math}
\item
$\Lambda$ is a Meyer set in $\R^n$;
\item
$\mu$ is a translation-bounded measure supported by $\Lam$;
\item
$\gamma_\mu$ is  an autocorrelation measure of $\mu$; and
\item
the support $S$ of the diffraction
	measure $\hat{\gamma}_\mu$ is a discrete closed set. 
\end{enumerate-math}
Then the same conclusion as in \thmref{DT.1} is true.
\end{thm}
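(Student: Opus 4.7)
The plan is to apply Theorem~\ref{RT.5} directly to the autocorrelation measure $\gam_\mu$ itself, rather than to its Fourier transform as was done in the proof of Theorem~\ref{DT.1}. The Meyer hypothesis on $\Lam$ provides precisely the uniform discreteness of the support of $\gam_\mu$ that compensates for the absence of any positivity assumption on $\mu$.

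The first step is to verify that $\gam_\mu$ satisfies the hypotheses of Theorem~\ref{RT.5}. The measure $\gam_\mu$ is always positive-definite, as recorded in Subsection~\ref{9.1}, independently of whether $\mu$ is positive. Since $\mu$ is supported on the Meyer set $\Lam$, the autocorrelation $\gam_\mu$ is supported by $\Lam-\Lam$ (as noted in Subsection~\ref{9.2}), and the Meyer condition means that $\Lam-\Lam$ is uniformly discrete. The spectrum of $\gam_\mu$ is the support $S$ of $\hat{\gam}_\mu$, which is a discrete closed set by hypothesis; moreover, since $\hat{\gam}_\mu$ is a positive measure and its support is a locally finite set, it is automatically purely atomic on $S$ and therefore of the form \eqref{RM.2}. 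Finally, $\gam_\mu$ is translation-bounded, hence slowly increasing, and it is not hard to check that the hypotheses of \eqref{RM.1} are met (the case $\gam_\mu\equiv 0$ being trivial). Thus Theorem~\ref{RT.5} applies to $\gam_\mu$ and yields that $\gam_\mu$ itself is of the form \eqref{RM.3} on some lattice $L$, and that $S$ is uniformly discrete.

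The second step is to transfer this structure from $\gam_\mu$ to $\hat\gam_\mu$ via Fourier duality. A measure of the form \eqref{RM.3} is a finite linear combination of modulated Dirac combs on translates of $L$; by the Poisson summation formula, each such combination Fourier-transforms to a finite linear combination of modulated Dirac combs on translates of the dual lattice $L^{*}$. Consequently $\hat\gam_\mu$ is itself of the form \eqref{RM.3} with $L$ replaced by $L^{*}$, and in particular the diffraction spectrum $S$ is contained in a finite union of translates of $L^{*}$. This gives the full conclusion of Theorem~\ref{DT.1}.

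No substantial obstacle arises. The conceptual point is simply that when $\Lam$ is a Meyer set, the uniform discreteness of $\Lam-\Lam$ is strong enough to place $\gam_\mu$ (rather than $\hat\gam_\mu$) within the scope of the structure theorem; since positive-definiteness of $\gam_\mu$ holds for every autocorrelation regardless of any sign condition on $\mu$, no positivity of $\mu$ is needed, and one only has to replace the role of the uniformly discrete spectrum (in Theorem~\ref{DT.1}) by the uniformly discrete difference set of $\Lam$.
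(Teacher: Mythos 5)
Your proposal is correct and follows essentially the same route as the paper: the paper's (one-line) proof likewise deduces Theorem~\ref{DT.2} by applying Theorem~\ref{RT.5} (or \ref{RT.7}) to the autocorrelation $\gamma_\mu$ itself, using that $\gamma_\mu$ is positive-definite and supported by the Meyer set $\Lambda-\Lambda$, and then passing to $\hat\gamma_\mu$ by duality. Your explicit verification of the hypotheses (in particular that a positive measure with discrete closed support is automatically purely atomic) and of the Poisson-summation step simply fills in details the paper leaves implicit.
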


This can be deduced from either Theorem \ref{RT.5} or \ref{RT.7},
using the fact that the auto\-correlation measure $\gamma_\mu$ is a positive-definite measure,
supported by the Meyer set $\Lambda-\Lambda$.

\subsection*{Remarks} 
{\bf 1.}\;\;Similarly, one can prove a dichotomy result for the diffraction spectrum
of a measure $\mu$  supported by a Meyer set: either
the spectrum
is uniformly discrete, or it has a relatively dense set of accumulation points
(using Theorem \ref{AT.5} or \ref{CT.1}).
\par
{\bf 2.}\;\;In the latter case, the spectrum $S$ need not be dense in any ball.
One can verify that the measure constructed in the proof of \thmref{ET.1}
is an autocorrelation of another measure whose support is also a Meyer set.

\section{Non pure point spectrum}

\subsection{}
In crystallography it is often interesting to consider also discrete measures
$\mu$, whose Fourier transform $\hat{\mu}$ is a measure which has both a pure point
component and a continuous one. The pure point component is often referred to as ``Bragg
peaks'', while the continuous component is called ``diffuse background''.
\par
 Let $\mu$ be a
(slowly increasing) measure on $\R^n$ with discrete support $\Lambda$: 
\begin{equation}
	\label{FE.1}
	\mu = \sum_{\lambda\in \Lambda} \mu(\lambda)\delta_\lambda, \quad \mu(\lambda)\ne 0.
\end{equation}
Assume that $\hat{\mu}$ is also a slowly increasing measure, and consider its decomposition 
\begin{equation}
	\label{FE.2.2}
 \hat{\mu} = \hat{\mu}_d+\hat{\mu}_c 
\end{equation}
into a sum of a pure point measure
\begin{equation}
	\label{FE.2}
	\hat{\mu}_d = \sum_{s\in S}\hat{\mu}_d(s)\delta_s, \quad \hat{\mu}_d(s)\ne 0,
\end{equation}
and a continuous measure $\hat{\mu}_c$. The set $S$ is the support of the discrete part $\hat{\mu}_d$.
\par
We can extend our previous results to this more general situation,
using the following result (\thmref{RT.10}): If
 the support $\Lambda$ is uniformly discrete, then $\hat{\mu}_d$ 
 is the Fourier transform of another  measure
	$\mu'$, whose support $\Lambda'$ is also a uniformly discrete set.

\subsection*{Remark} 
We will see from the proof that the new measure $\mu'$ is a weak limit of 
translates of $\mu$. Hence, in particular, the following is true:
\begin{enumerate-math}
\item \label{FT.1.i} If $\mu$ is a positive measure, then also $\mu'$ is positive;
\item \label{FT.1.ii}
 If $\Lambda-\Lambda$ is a discrete closed set, 
	then also $\Lambda'-\Lambda'$ is a discrete closed set; 
\item \label{FT.1.iii}
 If $\Lambda-\Lambda$ is uniformly discrete,
	then also $\Lambda'-\Lambda'$ is uniformly discrete.
\end{enumerate-math}
Property \ref{FT.1.i} is obvious.
Properties \ref{FT.1.ii} and \ref{FT.1.iii} follow  from the
 fact that  $\Lambda'-\Lambda'$ must be contained in the closure of
the  set $\Lambda-\Lambda$.

\subsection{}
First we give the proof of \thmref{RT.10}. We will use the following lemmas:
\begin{lem}
	\label{FL.1}
	Let $\nu$ be a finite measure on $\R^n$. Then 
	\[ \lim_{R\to\infty} (2R)^{-n}\int_{[-R,R]^n}\left| \hat{\nu}(t)\right|^2dt
	= \sum_{a}\left|\nu (\{a\} )\right|^2, \]
	where $a$ goes through all the atoms of the measure $\nu$. 
\end{lem}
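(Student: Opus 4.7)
The plan is to expand $|\hat{\nu}(t)|^2$ as a double integral against $\nu\otimes\bar\nu$, swap the order of integration (legitimate since $\nu$ is a finite measure), identify the resulting kernel as a Dirichlet-type product, and then pass to the limit by dominated convergence.

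Writing $|\hat{\nu}(t)|^2 = \hat{\nu}(t)\overline{\hat{\nu}(t)}$ and applying Fubini to the finite product measure, one arrives at
\[
(2R)^{-n}\int_{[-R,R]^n}|\hat{\nu}(t)|^2\,dt = \iint K_R(x-y)\,d\nu(x)\,d\bar\nu(y),
\]
where
\[
K_R(u) = \prod_{j=1}^{n}\frac{\sin(2\pi R u_j)}{2\pi R u_j},
\]
each factor being interpreted as $1$ when $u_j = 0$. The two properties of $K_R$ that I would exploit are that $|K_R(u)|\le 1$ everywhere (since $|\sin x/x|\le 1$), and that $K_R(u)\to \1_{\{u=0\}}$ pointwise as $R\to\infty$: any nonzero coordinate $u_j$ forces the corresponding factor to tend to $0$, while the remaining factors stay bounded by $1$.

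Since $|\nu|\times|\nu|$ is a finite positive measure on $\R^n\times\R^n$, dominated convergence then gives
\[
\lim_{R\to\infty}(2R)^{-n}\int_{[-R,R]^n}|\hat{\nu}(t)|^2\,dt = \iint \1_{\{x=y\}}\,d\nu(x)\,d\bar\nu(y).
\]
To evaluate this diagonal integral I would integrate first in $x$: for fixed $y$, the inner integral equals $\nu(\{y\})$, which vanishes outside the countable set $A$ of atoms of $\nu$. Since the continuous part of $\bar\nu$ puts no mass on the countable set $A$, integration against $d\bar\nu(y)$ collapses to a sum over atoms, giving $\sum_{a\in A}\nu(\{a\})\overline{\nu(\{a\})} = \sum_a|\nu(\{a\})|^2$, as required.

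I do not foresee a substantial obstacle: both routine checks, namely the pointwise limit of $K_R$ on $\R^n$ and the fact that the diagonal is $\nu\otimes\bar\nu$-negligible apart from the atomic contribution, are standard facts about product measures, and the use of Fubini and dominated convergence is justified by the finiteness of $\nu$.
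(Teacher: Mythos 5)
Your argument is correct: the expansion of $|\hat{\nu}(t)|^2$ against $\nu\otimes\bar\nu$, the identification of the averaged kernel as the product of $\sin(2\pi Ru_j)/(2\pi Ru_j)$ factors, the dominated convergence step (justified since $|K_R|\le 1$ and $|\nu|\times|\nu|$ is finite), and the evaluation of the diagonal integral as the sum over atoms are all sound; this is the standard proof of Wiener's lemma. The paper itself gives no proof, stating only that this is the well-known Wiener's lemma in $\R^n$, so there is nothing to compare against beyond confirming that your write-up supplies the usual argument correctly.
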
 
This is the well-known Wiener's lemma in $\R^n$. 
\begin{lem}
	\label{FL.2}
	Let $\nu$ be a (slowly increasing) continuous measure on $\R^n$. Then there exist
	vectors $\omega_k\in \R^n$ $(k\ge 1)$ such that the measures 
	\begin{equation}
		\label{FL.2.1}
		\nu_k(x):=e^{-2\pi i\dotprod{\omega_k}{x}} \,\nu(x)
	\end{equation}
	tend to zero as $k\to\infty$ in the space of temperate distributions. 
\end{lem}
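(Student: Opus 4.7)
The plan is to reformulate the desired convergence $\nu_k \to 0$ in the space of temperate distributions as a pointwise decay statement for Fourier transforms of finite continuous measures, and then to produce the sequence $\{\omega_k\}$ by combining \lemref{FL.1} with a diagonal argument over a countable dense family of Schwartz test functions.

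For any Schwartz function $\varphi$, I would write
\[ \dotprod{\nu_k}{\varphi} \;=\; \int \varphi(x)\, e^{-2\pi i\dotprod{\omega_k}{x}}\,d\nu(x) \;=\; \ft{\varphi\nu}(\omega_k). \]
Since $\nu$ is slowly increasing and $\varphi$ decays faster than any polynomial, $\varphi\nu$ is a \emph{finite} measure; and since $\nu$ has no atoms, neither does $\varphi\nu$. Applying \lemref{FL.1} to $\varphi\nu$ therefore gives
\[ \lim_{R\to\infty}(2R)^{-n}\int_{[-R,R]^n}\bigl|\ft{\varphi\nu}(t)\bigr|^2\,dt \;=\; 0, \]
and by Chebyshev's inequality the set
\[ E_\varphi(\eps):=\bigl\{t\in\R^n:\,|\ft{\varphi\nu}(t)|\geq\eps\bigr\} \]
has vanishing upper density, i.e.\ $|E_\varphi(\eps)\cap[-R,R]^n|/(2R)^n\to 0$, for every $\eps>0$.

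Next, I would fix a countable family $\{\varphi_m\}_{m\geq 1}$ dense in the separable Fr\'echet space $\mathcal{S}(\R^n)$, and for each $k\geq 1$ set
\[ F_k:=\bigcup_{m=1}^{k}E_{\varphi_m}(1/k). \]
As a finite union of sets of vanishing upper density, $F_k$ itself has vanishing upper density, so $F_k\ne\R^n$ and one may pick $\omega_k\in\R^n\setminus F_k$. By construction $|\ft{\varphi_m\nu}(\omega_k)|<1/k$ whenever $m\leq k$, and hence $\ft{\varphi_m\nu}(\omega_k)\to 0$ as $k\to\infty$ for each fixed $m$. To extend this from the dense family to an arbitrary Schwartz $\varphi$, I would exploit that the slow growth of $\nu$ furnishes constants $N$ and $C$ with $\int|\psi|\,d|\nu|\leq C\sup_x(1+|x|)^{N+n+1}|\psi(x)|$ for every Schwartz $\psi$, the right-hand side being a continuous seminorm on $\mathcal{S}(\R^n)$. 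Given $\eps>0$, choose $m$ with $\int|\varphi-\varphi_m|\,d|\nu|<\eps/2$ and then $k\geq m$ with $1/k<\eps/2$; the estimate
\[ |\ft{\varphi\nu}(\omega_k)|\;\leq\;\int|\varphi-\varphi_m|\,d|\nu|+|\ft{\varphi_m\nu}(\omega_k)|\;<\;\eps \]
yields $\dotprod{\nu_k}{\varphi}\to 0$ for every Schwartz $\varphi$, which is exactly the required convergence.

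The main subtlety is producing a \emph{single} sequence $\{\omega_k\}$ that works simultaneously for all Schwartz test functions, not merely one $\omega_k$ per function. The separability of $\mathcal{S}(\R^n)$ together with the uniform Schwartz-seminorm control that slow growth of $\nu$ provides are exactly what make the diagonal argument succeed.
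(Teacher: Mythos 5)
Your proposal is correct and follows essentially the same route as the paper: both arguments apply Wiener's lemma (\lemref{FL.1}) to the finite continuous measures $\varphi_j\nu$ for a countable dense family of Schwartz functions, select $\omega_k$ where finitely many of the transforms $\ft{\varphi_j\nu}$ are simultaneously small (the paper does this by averaging the sum $\Phi_k$ of squares, you by a Chebyshev upper-density bound on the exceptional sets --- an equivalent device), and then pass from the dense family to all of $\mathcal{S}(\R^n)$ using the equicontinuity furnished by the slow growth of $\nu$. No gaps.
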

\begin{proof}
	Let $\{\varphi_j\}$, $j\geq 1$, be a sequence of functions dense in the Schwartz
	space. Define 
	\[ \Phi_k(t) := \sum_{j=1}^{k}\left| \int \varphi_j(x)e^{-2\pi i
	\dotprod{t}{x}}d\nu(x)\right|^2, \quad t\in \R^n. \]
	For each $j$, the measure $\varphi_j\cdot \nu$ is finite and continuous, hence
	by \lemref{FL.1} we have
	\[ \lim_{R\to\infty}(2R)^{-n}\int_{[-R,R]^n} \Phi_k(t)dt = 0.  \]
	This implies that for each $k$ one can find $\omega_k\in\R^n$ such that $\Phi_k(\omega_k)<
	1/k^2$.
\par
 Now consider the measure $\nu_k$ defined by \eqref{FL.2.1}. We have
	\[ \left|\dotprod{\nu_k}{\varphi_j}\right| < \frac{1}{k} \quad (k\ge j) \]
	and hence $\dotprod{\nu_k}{\varphi_j}\to 0$ as $k\to\infty$, for each $j$.
Since $\nu$ is a slowly increasing measure, the sequence $\nu_k$ is uniformly bounded
  in the space of temperate distributions. Since the $\varphi_j$ are dense in the
Schwartz space, we can conclude that $\dotprod{\nu_k}{\varphi}\to 0$ as $k\to\infty$,
for every Schwartz function $\varphi$. This proves the lemma.
\end{proof}
\begin{proof}[Proof of \thmref{RT.10}]
	Use \lemref{FL.2} to find vectors $\omega_k$ such that 
	\[ e^{-2\pi i \dotprod{\omega_k}{x}} \,\hat{\mu}_c(x)\to 0, \quad k\to \infty \]
	in the space of temperate distributions. By taking a subsequence if necessary we
	can also assume  that the sequence $e^{2\pi i \dotprod{\omega_k}{s}}$ has a limit as
	$k\to\infty$, for each $s\in S$. Let $\{k_j\}$ be a sufficiently fast increasing sequence
	such that 
	\begin{equation}
		\label{FT.1.0}
	 e^{2\pi i \dotprod{\omega_j-\omega_{k_j}}{x}} \, \hat{\mu}_c(x)\to 0, \quad
	j\to\infty. 
	\end{equation}
	Since the exponential in \eqref{FT.1.0} tends to $1$ on $S$, 	it follows that the measure
	\begin{equation}
		\label{FT.1.1}
		e^{2\pi i \dotprod{\omega_j-\omega_{k_j}}{x}} \,\hat{\mu}(x) 
	\end{equation}
	tends to $\hat{\mu}_d$ as $j\to \infty$. 
The measure in \eqref{FT.1.1} is the
	Fourier transform of the measure 
	\[ \mu_j(t):= \mu(t+\omega_j-\omega_{k_j}). \]
	Therefore, $\mu_j$ tends as $j\to\infty$ to a certain distribution $\mu'$, whose
	Fourier transform is $\hat{\mu}_d$. By \lemref{AL.4} the measure $\mu$ is
	translation-bounded, and therefore also $\mu'$ is a translation-bounded measure.
	The measure $\mu_j$ is supported by the set
	\[ \Lambda_j := \Lambda-\omega_j+\omega_{k_j}, \]
	which is  uniformly discrete  with $d(\Lambda_j)=d(\Lambda)$. Hence the support $\Lambda'$
	of the measure $\mu'$ must satisfy $d(\Lambda')\ge d(\Lambda)$, so $\Lam'$ is also a uniformly
	discrete set. 
\end{proof}
\subsection{}
\thmref{RT.10} allows to extend our previous results to the case when the measure
$\hat{\mu}$ has also a continuous component. For example, we have:
\begin{thm}
	\label{FT.2}
	Let $\mu$ be a measure on $\R^n$ satisfying \eqref{FE.1}--\eqref{FE.2}. Assume
	that $\Lambda$ is uniformly discrete, $S$ is discrete and closed, and at least one
	of the following additional conditions is satisfied: 
	\begin{enumerate-math}
	\item \label{FT.2.i} $\mu$ is positive, and $S$ is uniformly discrete; 
	\item \label{FT.2.ii} $\hat{\mu}_d$ is positive; 
	\item \label{FT.2.iiii} $n=1$, and $S$ satisfies condition \eqref{RM.4};
	\item \label{FT.2.iv} $\Lambda-\Lambda$ is uniformly discrete. 
	\end{enumerate-math}
	Then $S$ is a uniformly discrete set, contained in a finite union of translates of
	a lattice, and the measure $\hat{\mu}_d$ has the form \eqref{RM.3}.
\end{thm}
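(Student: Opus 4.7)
The plan is to reduce \thmref{FT.2} to its pure-point analogues (Theorems~\ref{RT.0}, \ref{RT.5}, \ref{RT.6}, \ref{RT.7}) by stripping the continuous part of $\hat\mu$ via \thmref{RT.10}. The substantive content is already in \thmref{RT.10} and the Remark following it; what remains is bookkeeping plus one routine Fourier duality step.

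First I would invoke \thmref{RT.10} to produce a measure $\mu'$ on $\R^n$, with uniformly discrete support $\Lambda'$, whose Fourier transform coincides with the pure point part $\hat\mu_d$. Thus $\mu'$ is itself a measure of the form \eqref{RM.1} whose Fourier transform is a pure point measure of the form \eqref{RM.2}; the support of $\mu'$ is $\Lambda'$ (uniformly discrete), and its spectrum is exactly the set $S$ of the hypothesis (discrete and closed).

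Next, using the Remark after \thmref{RT.10}, which asserts that $\mu'$ is a weak limit of translates of $\mu$, I would check case by case that $\mu'$ satisfies the hypotheses of one of the pure-point theorems. Under \ref{FT.2.i}, property \ref{FT.1.i} of the Remark yields that $\mu'$ is positive, and $S$ is assumed uniformly discrete, so \thmref{RT.0} applies. Under \ref{FT.2.ii}, $\hat{\mu'}=\hat\mu_d$ is positive, so $\mu'$ is positive-definite, and \thmref{RT.5} applies. Under \ref{FT.2.iiii}, $\mu'$ satisfies the hypotheses of \thmref{RT.6} directly. Under \ref{FT.2.iv}, property \ref{FT.1.iii} of the Remark gives that $\Lambda'-\Lambda'$ is uniformly discrete, so \thmref{RT.7} applies. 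In all four cases the conclusion is that $\mu'$ is representable in the form \eqref{RM.3} on some lattice $L$.

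Finally I would transfer this conclusion from $\mu'$ to $\hat\mu_d$. A direct computation using the Poisson summation formula shows that the Fourier transform of a measure of the form \eqref{RM.3} on cosets $L+\tau_j$ of a lattice $L$ is again a measure of the same form, now supported on cosets of the dual lattice $L^*$, with translations and modulations exchanged. Applying this to $\mu'$ gives the required representation of $\hat\mu_d=\hat{\mu'}$, shows that $S$ is contained in a finite union of translates of $L^*$, and in particular implies $S$ is uniformly discrete. I do not anticipate a serious obstacle: the heavy lifting is done by \thmref{RT.10} and the four cited pure-point theorems, and the lattice duality step at the end is standard.
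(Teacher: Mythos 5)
Your proposal is correct and follows exactly the route the paper takes: pass to the measure $\mu'$ of \thmref{RT.10} (using the Remark after it to transfer positivity and the discreteness of $\Lambda-\Lambda$), apply the appropriate one of Theorems \ref{RT.0}, \ref{RT.5}, \ref{RT.6}, \ref{RT.7} in each case, and then dualize. The only thing you add is an explicit statement of the final Poisson-summation duality step, which the paper leaves implicit.
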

To prove this we consider the measure $\mu'$ given by \thmref{RT.10}, and apply to this
measure one of Theorems \ref{RT.0}, \ref{RT.5}, \ref{RT.6} or \ref{RT.7}, according to which
one of the conditions \ref{FT.2.i}--\ref{FT.2.iv} in \thmref{FT.2} is satisfied. 
\par
In a similar way, one can extend the dichotomy results given in Theorems \ref{AT.5} or \ref{CT.1}.
\subsection{}
The same applies  to Hof's diffraction by  measures supported on  Meyer sets:
\begin{thm}
	\label{FT.3}
	Let $\mu$ be a translation-bounded measure on $\R^n$, 
	supported by a  Meyer set $\Lambda$. Let $\gamma_\mu$ be an autocorrelation measure of
	$\mu$, and denote by $S$  the support of the discrete part of the diffraction measure $\hat{\gamma}_\mu$. Then, either
	\begin{enumerate-math}
	\item $S$ is a uniformly discrete set, contained in a finite union of translates
		of a lattice, and the discrete part of $\hat{\gamma}_\mu$ has the
		form \eqref{RM.3}; or 
	\item $S$ is not a discrete closed set, and moreover the set of accumulation
		points of $S$ is relatively dense. 
	\end{enumerate-math}
\end{thm}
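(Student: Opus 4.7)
The strategy is to reduce to the pure point case handled by \thmref{CT.1} via the structural result \thmref{RT.10}, and then apply \thmref{RT.9} in the uniformly discrete alternative. The key observation is that the autocorrelation $\gamma_\mu$ is a translation-bounded, positive-definite measure whose support lies in the Meyer set $\Lambda-\Lambda$ (as discussed in \secref{9.2}), and whose Fourier transform $\hat{\gamma}_\mu$ is a positive measure. So the problem of analyzing the pure point part of $\hat{\gamma}_\mu$ is really the problem of analyzing the Fourier transform of a positive-definite measure supported by a Meyer-type set.

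First I would verify that $\Lambda-\Lambda$ has a uniformly discrete difference set: the Meyer property gives a finite $F$ with $\Lambda-\Lambda\subset\Lambda+F$, and then
\[ (\Lambda-\Lambda)-(\Lambda-\Lambda)\subset \Lambda+F+F-F, \]
which is a finite union of translates of $\Lambda$, hence uniformly discrete. Therefore the support of $\gamma_\mu$ has a uniformly discrete difference set. I would then apply \thmref{RT.10} (and the remark following it) to $\gamma_\mu$ to obtain a translation-bounded measure $\gamma'$ with $\hat{\gamma}' = (\hat{\gamma}_\mu)_d$, whose support $\Lambda'$ is uniformly discrete and satisfies $\Lambda'-\Lambda'$ uniformly discrete as well. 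If $(\hat{\gamma}_\mu)_d=0$ then $S=\varnothing$ and there is nothing to prove, so assume there is at least one Bragg peak; \lemref{AL.10} then ensures $\Lambda'$ is relatively dense, so $\Lambda'$ is itself a Meyer set. Moreover, $\hat{\gamma}'$ is the pure point part of the positive measure $\hat{\gamma}_\mu$ and so is itself positive, which makes $\gamma'$ positive-definite.

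Now I would apply \thmref{CT.1} to $\gamma'$. It gives the required dichotomy on the spectrum $S$ of $\gamma'$, which by construction coincides with the support of the discrete part of $\hat{\gamma}_\mu$. In the alternative where $S$ has a relatively dense set of accumulation points we are in case (ii) and are done. In the alternative where $S$ is uniformly discrete, both $\Lambda'-\Lambda'$ and $S$ are uniformly discrete, so \thmref{RT.9} applies to $\gamma'$ and yields that $\gamma'$ has the form \eqref{RM.3}. Since the class of measures of the form \eqref{RM.3} is invariant under the Fourier transform (translation and modulation are exchanged with each other and a Dirac comb on $L$ is sent to a Dirac comb on $L^{*}$), we conclude that $\hat{\gamma}' = (\hat{\gamma}_\mu)_d$ is also of the form \eqref{RM.3}, and in particular $S$ is contained in a finite union of translates of the dual lattice. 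This gives case (i).

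The main obstacle is making sure the reduction via \thmref{RT.10} preserves \emph{both} the Meyer-type discreteness on the support side (so that \thmref{CT.1} and \thmref{RT.9} are applicable to $\gamma'$) and the positivity of $\hat{\gamma}'$ (which is needed to guarantee that $\gamma'$ itself is non-trivial whenever $\hat{\gamma}_\mu$ has a pure point part, via \lemref{AL.10}). Both points are essentially bookkeeping — the first uses the closure of the Meyer property under iterated differences, the second is the trivial observation that the pure point component of a positive measure is positive — but they are what makes the whole reduction go through.
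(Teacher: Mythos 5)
Your proof is correct and follows essentially the same route as the paper, which likewise reduces to the pure point case by applying \thmref{RT.10} to the autocorrelation $\gamma_\mu$ supported on the Meyer set $\Lambda-\Lambda$ and then invokes the dichotomy (\thmref{AT.5} or \thmref{CT.1}) together with the structure theorem; your choice of \thmref{CT.1} and \thmref{RT.9} in place of \thmref{AT.5} and \thmref{RT.0} is an inessential variation, since you have positive-definiteness of $\gamma'$ available anyway. The only detail worth tightening is the assertion that $\Lambda+F+F-F$ is uniformly discrete ``because it is a finite union of translates of $\Lambda$'': a finite union of translates of a uniformly discrete set need not be uniformly discrete in general, but it is here because $\Lambda-\Lambda$ is uniformly discrete, so the difference set of $\Lambda+F+F-F$ sits inside finitely many translates of the discrete closed set $\Lambda-\Lambda$ and therefore has $0$ as an isolated point.
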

To prove this one can first apply \thmref{RT.10} to the measure
$\gamma_\mu$ which is supported by the Meyer set $\Lam-\Lam$,
and then use Theorem \ref{RT.0} and Theorem \ref{AT.5} or \ref{CT.1}.

\subsection*{Remark} 
An alternative proof of \thmref{FT.3}, which does not rely
on \thmref{RT.10}, can be given using the following:
\begin{lem}
	\label{FL.4}
	Let $\nu$ be a translation-bounded measure on $\R^n$, and assume that
$\ft\nu$ is a slowly increasing measure. Then $\nu$ has a unique autocorrelation measure
$\gamma_\nu$, and the diffraction measure $\hat{\gamma}_\nu$ is a pure point
measure given by
\[
\ft{\gamma}_\nu 
	= \sum_{a}\left|\ft\nu (\{a\} )\right|^2 \delta_a, \]
	where $a$ goes through all the atoms of the measure $\ft\nu$. 
\end{lem}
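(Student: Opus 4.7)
The plan is to identify the Fourier transform of every weak-$*$ subsequential limit of the measures $\gam_\nu^R := (2R)^{-n}\nu_R*\tilde\nu_R$ with the pure point measure $\sigma := \sum_a |\ft\nu(\{a\})|^2 \delta_a$; uniqueness of the autocorrelation $\gam_\nu$ then follows automatically. Since $\nu$ is translation-bounded, the measures $\gam_\nu^R$ are positive, positive-definite, and uniformly translation-bounded (as recalled in \secref{9.1}), so the family is weak-$*$ relatively compact; by continuity of the Fourier transform on $\mathcal S'(\R^n)$, $\ft{\gam_\nu^R} = (2R)^{-n}|\ft{\nu_R}|^2$ converges, along the same subsequence, to $\ft{\gam_\nu}$ in the sense of tempered distributions.

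The key computation is to evaluate $\ft{\gam_\nu^R}$ on test functions of the form $|\ft\psi|^2$ with $\psi\in\mathcal S(\R^n)$. Since $\nu_R$ is a finite measure, $\psi*\nu_R$ is an $L^2$-function with $\ft{\psi*\nu_R}=\ft\psi\cdot\ft{\nu_R}$, so Plancherel gives
\[
\langle \ft{\gam_\nu^R},\,|\ft\psi|^2\rangle \;=\; (2R)^{-n}\!\int |\ft\psi(t)|^2\,|\ft{\nu_R}(t)|^2\,dt \;=\; (2R)^{-n}\!\int |(\psi*\nu_R)(u)|^2\,du.
\]
In parallel, I apply \lemref{FL.1} to the \emph{finite} measure $\rho := \ft\psi\cdot\ft\nu$ (finite because $\ft\psi$ is Schwartz and $\ft\nu$ is slowly increasing), whose atom at a point $a$ equals $\ft\psi(a)\,\ft\nu(\{a\})$. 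A short computation using $\ft{\ft g}=g^\vee$ and $\ft{fg}=\ft f*\ft g$ identifies $\ft\rho(t)$ with $(\psi*\nu)(-t)$, so Wiener's lemma yields
\[
\lim_{R\to\infty}(2R)^{-n}\!\int_{[-R,R]^n}|(\psi*\nu)(u)|^2\,du \;=\; \sum_a|\ft\psi(a)|^2\,|\ft\nu(\{a\})|^2.
\]

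It remains to compare $\int_{\R^n}|\psi*\nu_R|^2$ with $\int_{[-R,R]^n}|\psi*\nu|^2$ and to extend the resulting convergence to arbitrary test functions. For the first, I split $\R^n$ into a bulk $|u|_\infty<R-\sqrt R$, a thin transition zone $|u|_\infty\sim R$, and a far exterior $|u|_\infty\gg R$; using the Schwartz decay of $\psi$ against the translation-boundedness of $|\nu|$ shows that, after normalizing by $(2R)^n$, the two quantities share the same limit. Hence $\langle \ft{\gam_\nu^R},|\ft\psi|^2\rangle \to \langle \sigma, |\ft\psi|^2\rangle$ for every Schwartz $\psi$. By polarization this extends to products $\ft\psi_1\cdot\overline{\ft\psi_2}$; writing $\overline{\ft\psi_2}=\ft{\psi_2^\sharp}$ for $\psi_2^\sharp(x):=\overline{\psi_2(-x)}\in\mathcal S$, such products equal $\ft{\psi_1*\psi_2^\sharp}$, and convolutions of Schwartz functions are dense in $\mathcal S(\R^n)$ by an approximate-identity argument. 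Thus $\ft{\gam_\nu}=\sigma$ as tempered distributions and as measures, which proves both the stated formula and the uniqueness of $\gam_\nu$. The main obstacle I expect is the boundary estimate just described: although routine, it is delicate because $\psi*\nu$ is only bounded (not decaying), and the exterior $|u|_\infty>R$ must be controlled by balancing the trivial $L^\infty$ bound $|\psi*\nu_R|\le\|\psi*|\nu|\|_\infty$ against the Schwartz decay of $\psi$ at different scales.
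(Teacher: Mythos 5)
Your argument is correct: identifying $\ft{\gam_\nu^R}$ with the measure $(2R)^{-n}|\ft{\nu_R}(t)|^2\,dt$, applying Wiener's lemma (\lemref{FL.1}) to the finite measure $\ft\psi\cdot\ft\nu$ whose Fourier transform is $(\psi*\nu)(-t)$, and controlling the boundary discrepancy between $(2R)^{-n}\int_{\R^n}|\psi*\nu_R|^2$ and $(2R)^{-n}\int_{[-R,R]^n}|\psi*\nu|^2$ via the Schwartz decay of $\psi$ against the translation-boundedness of $|\nu|$ yields $\ft{\gam_\nu}=\sum_a|\ft\nu(\{a\})|^2\delta_a$ on the span of the functions $\ft{\psi_1}\overline{\ft{\psi_2}}$, which is dense in $\mathcal{S}(\R^n)$, while the uniform translation-boundedness of the $\gam_\nu^R$ gives the equicontinuity needed to pass from this dense subspace to all test functions and hence the uniqueness of $\gam_\nu$. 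The paper explicitly omits its proof of \lemref{FL.4}, but the fact that Wiener's lemma is isolated as \lemref{FL.1} strongly suggests that this is essentially the intended argument.
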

To prove \thmref{FT.3} using this lemma,  let $\nu := \gamma_\mu$.
Then the autocorrelation measure $\gamma_\nu$  is a discrete measure, supported by the Meyer 
set $\Lam+\Lam-\Lam-\Lam$, and 
by \lemref{FL.4} the measure $\hat{\gamma}_\nu$
is a pure point measure, with the same support as the discrete
part of $\ft{\gam}_\mu$. So our previous results can be applied
to the measure $\gamma_\nu$.
\par
We omit the proof of \lemref{FL.4}.


\section{Remarks. Open problems}

\subsection{} 
 Very recently, Y.~Meyer  has  found \cite{mey4} an interesting version of 
\thmref{RT.4}.      Namely, he  constructed  measures $\mu$
  whose supports and spectra are
       discrete closed sets,  which can be described by simple effective  formulas. 
       He also proved   that the parameters of  this construction can be chosen so 
that both $\Lam$ and $S$ are rationally independent  sets.  This is  a  stronger ``non-periodicity''
      condition  than  in \thmref{RT.4}.  However,  such a  measure cannot be translation-bounded,
      see \cite[Lemma 5]{mey4}.
    
      It should be mentioned  that  the last paper  contains some other examples of
      measures with discrete closed  supports and spectra.   See also \cite{kol}.
      All these examples,  in one way or another,  are based  on the classical Poisson
 summation formula. 
Question: can one construct an example  which in no way is based
on Poisson's formula?

\subsection{}
We mention some problems which are left open. 
\par
1.~The first one  concerns the positivity assumption in
\thmref{RT.0} in several dimensions.
Let $\mu$ be a measure on $\R^n$, $n>1$, 
satisfying \eqref{RM.1} and \eqref{RM.2}. Assume that $\Lambda, S$ are both
uniformly discrete sets. Is it true that $\Lambda$ can be covered
by a finite union of translates of several, not necessarily
commensurate, lattices?
(an example in \cite{fav} shows that $\Lam$ need not be
contained in a finite union of translates of a \emph{single} lattice).
\par
2.~A second problem concerns the positive-definiteness assumption
in \thmref{RT.5}, even in  dimension one:
Let $\mu$ be a measure on $\R$, with uniformly discrete support
$\Lam$ and discrete closed spectrum $S$. Does it follow that
$S$ must be also uniformly discrete?
\par
3.~The following question is also open: can one get a \define{positive} measure in \thmref{RT.4}\,?

\subsection{}
Our approach to prove \thmref{RT.0} (see \cite{lo2}) involved a combination
of analytic and discrete combinatorial considerations. 
In the latter part, a conclusion about the arithmetic structure
of a set $\Lambda\subset \R^n$ was derived from information on
discreteness of $\Lambda-\Lambda$. In that point we relied on results
which go back to Meyer \cite{mey1}. 
\par
In this context, it is worth to mention Freiman's theorem \cite{freim},
which states that a finite set
$A$ such that $\#(A+A)\le K  \# A$, must be contained in a
``generalized arithmetic progression'' whose dimension and size
are controlled in terms of the constant $K$. It might be
interesting to see whether it can also be used for a 
proof of \thmref{RT.0}.



\begin{thebibliography}{9999999}

\bibitem[BG13]{bagr}
M. Baake, U. Grimm, 
Aperiodic order, vol.\ 1. 
Cambridge University Press, 2013.

\bibitem[BM67]{bm67}
A. Beurling, P. Malliavin, 
On the closure of characters and the zeros of entire functions. 
Acta Math. \textbf{118} (1967), 79--93.

\bibitem[BT87]{bom}
E. Bombieri, J. E. Taylor, 
Quasicrystals, tilings, and algebraic number theory: some preliminary connections.
The legacy of Sonya Kovalevskaya, Contemp. Math., vol. 64, Amer. Math. Soc., Providence, RI, 1987, pp.  241--264.

\bibitem[CT87]{cah}
J. W. Cahn, J. E. Taylor,
An introduction to quasicrystals. 
The legacy of Sonya Kovalevskaya, Contemp. Math., vol. 64, Amer. Math. Soc., Providence, RI, 1987, pp. 265--286.

\bibitem[Dys09]{dys}
F. Dyson,
Birds and frogs. 
Notices Amer. Math. Soc. \textbf{56} (2009), no. 2, 212--223. 

\bibitem[Fav16]{fav}
S. Favorov,
Fourier quasicrystals and Lagarias' conjecture.
Proc. Amer. Math. Soc. \textbf{144} (2016), no. 8, 3527--3536.

\bibitem[Fre73]{freim}
G. A. Freiman,
Foundations of a structural theory of set addition.
Translated from the Russian.
American Mathematical Society, Providence, 1973.

\bibitem[Hof95]{hof}
A. Hof,
On diffraction by aperiodic structures.
Comm. Math. Phys. \textbf{169} (1995), no. 1, 25--43.

\bibitem[Kol16]{kol}
M. Kolountzakis,
Fourier pairs of discrete support with little structure.
J. Fourier Anal. Appl. \textbf{22} (2016), no. 1, 1--5.

\bibitem[Lag96]{lag1}
J. C. Lagarias, 
Meyer's concept of quasicrystal and quasiregular sets. 
Comm. Math. Phys. \textbf{179} (1996), no. 2,  365--376. 

\bibitem[Lag00]{lag2}
J. C. Lagarias, 
Mathematical quasicrystals and the problem of diffraction. 
Directions in mathematical quasicrystals, 61--93, 
CRM Monogr. Ser., 13, Amer. Math. Soc., Providence, 2000. 

\bibitem[LO13]{lo1}
N. Lev, A. Olevskii, 
Measures with uniformly discrete support and spectrum.
C. R. Math. Acad. Sci. Paris \textbf{351} (2013), no. 15--16, 599--603.

\bibitem[LO15]{lo2}
N. Lev, A. Olevskii, 
Quasicrystals and Poisson's summation formula.
Invent. Math. \textbf{200} (2015), no. 2, 585--606.

\bibitem[LO16]{lo3}
N. Lev, A. Olevskii, 
Quasicrystals with discrete support and spectrum.
Rev. Mat. Iberoam. \textbf{32} (2016), no. 4, 1341--1352.

\bibitem[MM10]{matei-meyer-simple}
B. Matei, Y. Meyer,
Simple quasicrystals are sets of stable sampling.
Complex Var. Elliptic Equ. \textbf{55} (2010), no. 8--10, 947--964.

\bibitem[Mey70]{mey0}
Y. Meyer, 
Nombres de Pisot, nombres de Salem et analyse harmonique.
Lecture Notes in Mathematics \textbf{117}, Springer-Verlag, 1970.

\bibitem[Mey72]{mey1}
Y. Meyer, 
Algebraic numbers and harmonic analysis. 
North-Holland, Amsterdam, 1972. 

\bibitem[Mey95]{mey2}
Y. Meyer,
Quasicrystals, diophantine approximation and algebraic numbers.
Beyond quasicrystals (Les Houches, 1994), 3--16, Springer, Berlin, 1995.

\bibitem[Mey16]{mey4}
Y. Meyer,
Measures with locally finite support and spectrum.
Proc. Natl. Acad. Sci. USA \textbf{113} (2016), no. 12, 3152--3158.

\bibitem[Moo95]{moo}
R. V. Moody, 
Meyer sets and their duals. The mathematics of long-range aperiodic order (Waterloo, ON, 1995), 403--441, 
NATO Adv. Sci. Inst. Ser. C Math. Phys. Sci., 489, Kluwer Acad. Publ., Dordrecht, 1997. 

\bibitem[OU12]{ou3}
A. Olevskii, A. Ulanovskii,
On multi-dimensional sampling and interpolation, 
Anal. Math. Phys. \textbf{2} (2012), no. 2, 149--170. 


\end{thebibliography}
\end{document}